\def\N{{\mathbb N}}    %naturels
\def\Z{{\mathbb Z}}     %entiers relatifs
\def\R{{\mathbb R}}    %réels
\def\C{{\mathbb C}}    %complexes
\def\cA{{\mathcal A}} 
\def\cB{{\mathcal B}}
\def\cL{{\mathcal L}}
\def\cM{{\mathcal M}} 
\def\cO{{\mathcal O}}
\newcommand{\diag}{\operatorname{diag}}
\newcommand{\im}{\operatorname{Im}}
\newcommand{\triplenorm}[1]{\left\vert\kern-0.25ex\left\vert\kern-0.25ex\left\vert #1 
	\right\vert\kern-0.25ex\right\vert\kern-0.25ex\right\vert}
\def\idty{{\operatorname{Id}}}
\theoremstyle{plain}
\newtheorem{theorem}{Theorem}[section] %compteur commençant par le numéro de la section (on pourrait aussi faire commencer par le numéro de la sous-section - remplacer "section" par "subsection")
\newtheorem{proposition}[theorem]{Proposition}%[section]      
\newtheorem{lemma}[theorem]{Lemma}%[section]            %etc...
\newtheorem{corollary}[theorem]{Corollary}%[section]
\theoremstyle{definition}
\newtheorem{definition}{Definition}%[section]
\newtheorem{notations}[definition]{Notation}%[section]
\newtheorem{remark}[theorem]{Remark}%[section]
\newtheorem{assumption}[theorem]{Assumptions}%[section]
\def\BibTeX{{\rm B\kern-.05em{\sc i\kern-.025em b}\kern-.08em
		T\kern-.1667em\lower.7ex\hbox{E}\kern-.125emX}}
\begin{document}
	%\title{Control synthesis of recurrent neural networks with step input}
	\title{On the control of recurrent neural networks using constant inputs}
	
	\author{Cyprien Tamekue, Ruiqi Chen, and ShiNung Ching, \IEEEmembership{Senior Member, IEEE}
		\thanks{This work is partially supported by grant R21MH132240 from the US National Institutes of Health to SC.}
		\thanks{ Cyprien Tamekue and ShiNung Ching are with the Department of Electrical and Systems Engineering, Washington University in St. Louis, St. Louis, 63130, MO, USA; Ruiqi Chen is with the Neurosciences Program in the Division of Biology and Biomedical Sciences, Washington University in St. Louis, St. Louis, 63130, MO, USA (e-mail: cyprien@wustl.edu, chen.ruiqi@wustl.edu, shinung@wustl.edu). }
		% \thanks{Ruiqi Chen is with the Electrical and Systems Engineering, Washington University in St. Louis, St. Louis, 63130, MO, USA (e-mail: chen.ruiqi@wustl.edu).}
		% \thanks{ShiNung Ching is with the Electrical and Systems Engineering, Washington University in St. Louis, St. Louis, 63130, MO, USA (e-mail: shinung@wustl.edu).}
	}
	
	\maketitle
	% by Agrachev and Gamkrelidze
	\begin{abstract}
		This paper investigates the controllability of a broad class of recurrent neural networks widely used in theoretical neuroscience, including models of large-scale human brain dynamics. Motivated by emerging applications in non-invasive neurostimulation such as transcranial direct current stimulation (tDCS), we study the control synthesis of these networks using constant and piecewise constant inputs. The neural model considered is a continuous-time Hopfield-type system with nonlinear activation functions and arbitrary input matrices representing inter-regional brain interactions. Our main contribution is the formulation and solution of a control synthesis problem for such nonlinear systems using specific solution representations. {These representations yield explicit algebraic conditions for synthesizing constant and piecewise constant controls that solve a two-point boundary value problem in state space up to higher-order corrections with respect to the time horizon. In particular, the input is constructed to satisfy a tractable small-time algebraic relation involving the Jacobian of the nonlinear drift, ensuring that the synthesis reduces to verifying conditions on the system matrices. For canonical input matrices that directly actuate $k$ nodes, this implies that the reachable set (with constant inputs) of a given initial state is an affine subspace whose dimension equals the input rank and whose basis can be computed efficiently using a thin QR factorization.} Numerical simulations illustrate the theoretical results and demonstrate the effectiveness of the proposed synthesis in guiding the design of brain stimulation protocols for therapeutic and cognitive applications.
		
	\end{abstract}
	
	\begin{IEEEkeywords}
		Control Synthesis, Nonlinear Control, Recurrent Neural Networks, Neurostimulation, Transcranial Direct Current Stimulation.
	\end{IEEEkeywords}
	
	\section{Introduction}
	\label{sec:introduction}
	\IEEEPARstart{N}{on-invasive} neurostimulation techniques, such as transcranial magnetic stimulation (TMS) and transcranial electrical stimulation (tES), are increasingly being used to modulate brain activity in order to achieve desired cognitive outcomes~\cite{bergmann2009acute,lopez2014inter,grover2022long}. Conceptualizing the brain as a controlled dynamical system~\cite{medaglia2019clarifying,schiff2011neural} offers a powerful framework for identifying key brain regions and networks~\cite{power2013evidence} involved in specific cognitive functions and understanding how they can be modulated using targeted stimulation.
	
	In this regard, there is strong potential at the intersection of network control theory and clinical and basic neuroscience. For instance, there have been successes in the application of linear network control theory to optimize TMS, administered with a constant input over short time intervals~\cite{muldoon2016stimulation}. Such applications target the control of specific brain regions based on structural network connectivity derived from diffusion spectrum imaging (DTI). Similarly, at an analysis level, linear network control theory has been employed in human neuroimaging studies, again by leveraging DTI-parameterized models~\cite{gu2015controllability}. In this latter study, the authors postulated how specific brain regions may be suitable targets for control based on advantageous controllability properties relative to their potential actuation. A complementary perspective views brain stimulation---particularly deep brain stimulation---as a form of vibrational control that stabilizes desired neural dynamics through high-frequency inputs, a notion that has been formalized and validated in recent theoretical work~\cite{qin2022vibrational, qin2023vibrational}.
	
	While these earlier studies~\cite{gu2015controllability,muldoon2016stimulation} have provided valuable insights by using linear network control theory and structural connectivity data to understand brain dynamics, they are limited in key respects. Foremost, by virtue of linearity, they are only assured to provide local characterizations. In~\cite{muldoon2016stimulation}, it is shown that these local analyses can offer some predictive power over how stimulation induces functional changes in nonlinear models. However, the changes considered are in terms of low-dimensional correlation metrics between brain areas (network nodes), as opposed to specific configurations in state space. In contrast, our goal is to enable formal control synthesis to induce arbitrary network states in the presence of full nonlinearity, which is likely essential for understanding how different brain dynamics mediate cognitive function.
	
	\begin{figure}[t]
		\centering
		\includegraphics[width=0.80\linewidth]{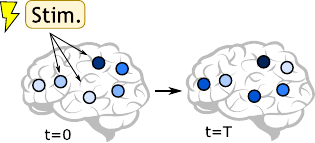}
		\caption{\textbf{We consider the control of network models of the general form \eqref{eq:Hopfield i-th neuron}. We are motivated by emerging applications in neurostimulation involving the manipulation of brain networks by means of exogenous input.}}
		\label{fig:scheme}
	\end{figure}
	
	Indeed, the need for more biologically realistic, nonlinear models of large-scale brain dynamics has been appreciated \cite{breakspear2017dynamic} as a precursor for the application of control theory to cognitive neuroscience. 
	Among the paradigms in this regard are whole-brain Mesoscale Individualized NeuroDynamic (MINDy) models \cite{chen2024dynamical,singh2020estimation}. Our previous works have demonstrated the validity and utility of MINDy models as a generative tool for understanding the relationship between individualized neural architectures and neural dynamics in both resting-state \cite{singh2020estimation,singh2020scalable} and cognitive task contexts \cite{singh2022enhancing}. The MINDy models derived from single-subject resting-state brain imaging data operate at a macroscale level, where each node represents a distinct brain region. A given model captures the temporal evolution of brain activity and the non-linear interactions between hundreds of brain regions, taking the form of a nonlinear dynamical system of continuous-time Hopfield-type recurrent neural networks \cite{hopfield1984neurons}. All (intrinsic) parameters--the decay and connectivity matrices and the transfer function parameters--are directly estimated from brain activity time series so that the resulting models predict future brain activity, providing a more accurate and biologically plausible representation of large-scale brain dynamics.
	
	A MINDy model belongs to the more general class of Hopfield-type recurrent neural networks, which will constitute the focus of our study herein. Specifically, we consider a model of interconnected neural masses associated with $d$ regions, described by a set of nonlinear differential equations that represent the dynamics of each region's state over time. The dynamic for the $i$-th region in this network evolves as
	\begin{equation}\label{eq:Hopfield i-th neuron}
		\begin{split}
			\frac{dx_i}{dt}(t)&=-\alpha_i x_i(t)+\sum_{j=1}^{d}W_{ij}f_j(x_j(t))+\sum_{j=1}^{k}b_{ij}u_j\\
			x_i(0)&=x_i^0%,\qquad I_i=\sum_{j=1}^{k}b_{ij}u_j
			% I_i&=\sum_{j=1}^{k}b_{ij}u_j
		\end{split}
	\end{equation}
	where $k\in\N^{*}$ satisfies $k\le d$, $x_i(t)$ denotes the $i$-th region’s state at time $t\ge 0$, $\alpha_i>0$ is a positive parameter reflecting the rate at which the current state of the $i$-th region decays, $u_j$ are constant control inputs, $b_{ij}$ represent coupling coefficients between control inputs and neurons, 
	% $I_i$ denotes the total constant control input to the $i$-th region, 
	$W_{ij}$ is a real constant that weights the connection from the $j$-th region to the $i$-th region, and $f_j$ is the activation function of the  $j$-th region.

	The study of neural networks, such as \eqref{eq:Hopfield i-th neuron}, has long been a topic of great interest for researchers seeking to understand complex dynamical behaviors and their associated control mechanisms, \cite{albertini1993neural,forti1995new,hopfield1984neurons,sontag1998remarks,yi1999estimate,zhang2014comprehensive}. Among the various properties investigated, controllability--the ability to steer a network from one state to another within a finite time--stands out for its significant theoretical and practical implications. As highlighted earlier, in fields such as neuroengineering and brain stimulation, particularly transcranial electrical stimulation (tES) with direct current (stimulation via a constant input), the operative issue is to determine a constant input that can drive the state of the network \eqref{eq:Hopfield i-th neuron} to a desired target state--within the state space--over a short time horizon (Figure \ref{fig:scheme}). Such capability would present a promising avenue for developing stimulation protocols tailored to achieve specific therapeutic outcomes \cite{degiorgio2013neurostimulation}, such as altering the excitability of the motor cortex \cite{bergmann2009acute}.
	
	In previous works such as \cite{forti1995new,sontag1998remarks,yi1999estimate,zhang2014comprehensive}, controllability or stability properties of equations as \eqref{eq:Hopfield i-th neuron} have been considered in the full-actuated configuration when $d=k$ and $B=(b_{ij})=\idty$ or in the specific cases of $\alpha_i=0$ for all $i$, and $B$ belonging to an open dense subset of $\cM_{d,k}(\R)$.
	
	In the current study, we do not impose a priori such a restriction on the input matrix $B\in\cM_{d,k}(\R)$. The assumptions on the activation functions are relaxed to those commonly used in the literature in studying neural networks. The general consideration on $B$ and the complexity arising from the inherent nonlinear dynamics present significant challenges, and systematic solutions for the \textit{control synthesis} problem are not readily available in the current literature. To address this challenge, we derive an explicit algebraic representation for the hidden state $x(t)$ and the associated constant input that solves the two-point boundary value problem in state space up to higher-order terms. This yields an explicit small-time algebraic condition for the constant input, showing that the required control can be determined by inverting an expression involving the drift's Jacobian and the desired change in state, up to higher-order corrections with respect to the time horizon. From this derivation, we then provide a step function (piecewise constant control) for large periods of time. Although tDCS typically relies on constant or slowly modulated inputs, step constant controls provide a theoretically grounded and numerically efficient extension of constant-input synthesis over longer time horizons. Note that, in the case of linear activation functions, the synthesized constant control remains valid for both short and long time horizons.
	
	For input matrices that directly actuate $k$ nodes, we show that the reachable set with constant inputs is an affine subspace whose dimension equals the rank of $B$, and its basis can be computed explicitly via a thin QR factorization of the associated constraint matrix. This characterization applies to both linear and nonlinear activation functions, clarifying how the local structure of the input matrix shapes the short-horizon reachable set.

	The remainder of the paper is organized as follows. In the next section, we introduce the general notations used throughout the paper. Section~\ref{s:general notations} presents our modeling framework, reformulating equation~\eqref{eq:Hopfield i-th neuron} into a form more suitable for controllability analysis and establishing key results concerning solution representations. Section~\ref{s:exact controllability} addresses the control synthesis problem and is divided into several parts: Section~\ref{ss:controllability linear activation function} focuses on the linear activation case; Section~\ref{ss:controllability nonlinear activation function} develops control synthesis strategies for general nonlinear activations that appear to be the proper generalization of the linear case; and Section~\ref{ss:forward and Backward syntheses} outlines a taxonomy of synthesis approaches derived from dual representations of the solution, clarifying the conceptual origin of the forward nominal-state  and backward nominal-state strategies. Section~\ref{ss:reachability via control synthesis methods} then provides practical considerations by analyzing the structure of reachable sets associated with the proposed step-function control syntheses. Section~\ref{s:general comments} offers broader discussion and possible extensions. Section~\ref{s:examples and numerical simulation} presents numerical illustrations of the theory. Finally, Section~\ref{s:concluding remarks and discussion} summarizes the main results and discusses directions for future research. Technical proofs of some results are deferred to the Appendix.

	\begin{notations}
		
		In this paper, $\Z$ stands to the set of integers, $\N^{*}$ denote the set of positive integers and $\N=\N^{*}\cup\{0\}$. For all $a, b\in\N$, we denote by $\llbracket a,b\rrbracket=\N\cap[a,b]$. For all $d, k\in\N^{*}$, we denote by $\cM_{d,k}(\R)$ the set of $d\times k$ matrices with real coefficients.  $\R^d$ denotes the $d$-dimensional real column vector and $\cL(\R^d)$ the space of linear maps on $\R^d$ that we identify, in the usual way, with $\cM_d(\R)$ the set of squared matrices of order $d$ with real coefficients. For all $(x,y)\in(\R^d)^2$, we denote by $|x|$  the Euclidean norm of $x$ and by $\langle x,y\rangle$ the scalar product of $x$ and $y$. We denote the identity matrix in $\cM_d(\R)$ by $\idty$, and for every matrix $M\in\cM_{d,k}(\R)$, $\|M\|=\sup\{|Mx|\mid x\in\R^k,\;|x|=1\}$ denotes the spectral norm of $M$, and $M^\dagger$ denotes the Moore-Penrose pseudo-inverse of $M$. For a symmetric matrix $M\in\cM_d(\R)$, $\lambda_{\min}(M)$ and $\lambda_{\max}(M)$ denote respectively its smaller and largest eigenvalues. For ease of notation, a diagonal matrix $\Gamma=\diag(\gamma_1,\gamma_2,\cdots,\gamma_d)\in\cM_d(\R)$, will simply denoted as $\Gamma = (\gamma_i\delta_{i,j})$, where $\delta_{i,j}$  is the usual Kronecker symbol. If $A\in\cM_{d}(\R)$, then $e^{A}\in\cM_{d}(\R)$ and $\sigma(A)$ are, respectively, the exponential and the set of eigenvalues of $A$, viz.
		\begin{equation}\label{eq:exp(A)}
			e^A = \sum_{n=0}^\infty\frac{A^n}{n!},\quad\sigma(A)=\{\lambda\in\C\mid\ker(A-\lambda\idty)\neq 0\}.
		\end{equation}
		We recall that $\cO$ stands to the big $\cO$-notation, and $y(t)\underset{t \sim 0}{=} \cO(x(t))$ means $y(t)$ is of the order of $x(t)$. Finally, we use $\dot{x}(t)=\frac{dx}{dt}(t)$ for the total derivative of $x$ w.r.t. $t$.
	\end{notations}
	
	\section{Representation of solutions}\label{s:general notations}
	To study the controllability of~\eqref{eq:Hopfield i-th neuron}, it is convenient to recast it in abstract form. Introduce the (nonlinear) map
	\begin{equation}\label{eq:nonlinear vector field N}
		N(x) = -Dx+Wf(x)\qquad\forall x\in\R^d.
	\end{equation}
	
	Then, \eqref{eq:Hopfield i-th neuron} recasts as
	\begin{equation}\label{eq:Hopfield nonlinear}
		\dot{x}(t) = N(x(t))+Bu,\quad x(0) = x^0
	\end{equation}
	where $x=(x_1, x_2,\cdots, x_d)^\top\in\R^d$ is the state's vector, $x^0=(x_1^0,\cdots, x_d^0)^\top\in\R^d$ is the initial state, $u=(u_1,\cdots, u_k)^\top\in\R^k$ is a constant control, $B=(b_{ij})\in\cM_{d,k}(\R)$ is the input matrix, $D = (\alpha_i\delta_{i,j})\in\cM_d(\R)$ is the decay matrix, $W=(W_{ij})\in\cM_d(\R)$ is the connectivity matrix and $f:\R^d\to\R^d$ given by $f(x) = (f_1(x_1), f_2(x_2),\cdots, f_d(x_d))^\top$ is the firing rate function of the network. 
	
	\begin{notations}\label{nota:matrix A}
		We set $A=-D+W$, where $D$ is the decay matrix and $W$ is the connectivity matrix in \eqref{eq:nonlinear vector field N}.
	\end{notations}
	
	\begin{assumption}\label{ass:general assumption}
		
		Unless otherwise stated, we assume that for every $i\in\llbracket 1,d\rrbracket$, the activation function $f_i:\R\to\R$ is a $C^2$, globally Lipschitz function on $\R$ with a bounded second derivative, and satisfies $f_i(0)=0$. The latter is without loss of generality since we may always take for every $s\in\R$, $\tilde{f}_i(s)=f_i(s)-f_i(0)$ and set $\widetilde{Bu} = Bu+Wf(0)$ as the new input to equation \eqref{eq:Hopfield nonlinear}. Finally, for the sake of simplicity in the presentation, we also assume that $\|f_i'\|_\infty=1$. Indeed, as long as $\|f_i'\|_\infty\neq 0$, we can always define an activation function $\tilde{f}_i(s) = f_i(\lambda s)$ with $\lambda=1/\|f_i'\|_\infty$ and $s\in\R$. 
		
		Throughout the following, we will refer to the following fixed parameters:
		\begin{equation}
			\Gamma=\lambda_{\min}(D)-\|W\|,\quad \Lambda=\lambda_{\max}(D)+\|W\|, 
		\end{equation}
		\begin{equation}
			\Lambda_1=\|W\|\|f''\|_\infty\quad\text{and}\quad \|f''\|_\infty=\max_{i\in\llbracket 1,d\rrbracket}\|f_i''\|_\infty.
		\end{equation}
	\end{assumption}

	Recall that the vector field $N$ is globally Lipschitz on $\R^d$ (see, for instance, Lemma~\ref{lem:properties on N}). Denoting by $\phi_t:\R^d\to\R^d$ its flow at $t\in\R$, it follows that the family $\{\phi_t\mid t\in\R\}$ is a one parametric subgroup of $\operatorname{Diff(\R^d)}$, the group of diffeomorphism of $\R^d$. Note that for any $x^0\in \R^d$, $\phi_t(x^0)$ is the unique solution of \eqref{eq:Hopfield nonlinear} when the control $u=0$, namely
	\begin{equation}\label{eq::nonlinear flow}
		\frac{d}{d t}  \phi_t(x^0) = N(\phi_t(x^0)),\qquad
		\phi_0(x^0) = x^0.
	\end{equation}
	Let $\psi_t$ denote the inverse of $\phi_t$, i.e., $\psi_t=(\phi_t)^{-1}=\phi_{-t}$. Then, for every $x^0\in\R^d$, it holds
	\begin{equation}\label{eq::nonlinear flow psi}
		\frac{d}{d t} \psi_t(x^0) = -N(\psi_t(x^0)),\qquad
		\psi_0(x^0) = x^0.
	\end{equation}
	In particular, the map $(t, x)\in\R\times\R^d\mapsto\phi_t(x)\in\R^d$ is of class $C^2$ (see, for instance, \cite[Chapter~15, Theorem~1]{hirsch1974differential}). Moreover, for a fixed $t\in\R$, if we use $D\phi_t(x)$ to denote the differential of $\phi_t$, then $D\phi_t(x)$ is a well-defined invertible matrix, and it holds
	\begin{equation}\label{inverse of the differential of the flow}
		\left[D\phi_t(x)\right]^{-1} = D\psi_t(\phi_t(x)),\qquad\forall x\in\R^d.
	\end{equation}
	
	The classical theory of ordinary differential equations (ODE) can be invoked to justify the existence and uniqueness of solutions of \eqref{eq:Hopfield nonlinear} (see, for instance, \cite[Chapter 2]{bressan2007introduction}). In this work, we represent the system's state trajectory in a form reminiscent of that of linear time-invariant systems as used in \cite[Theorem 4.4]{tamekue2024mathematical}. This representation emerged from the chronological calculus framework introduced by Agrachev and Gamkrelidze in the late 1970s for solving nonautonomous ODEs on finite-dimensional manifolds \cite{agravcev1979exponential}. For a comprehensive treatment of this theory, see also \cite[Chapter~6]{agrachev2019comprehensive}, and \cite{kawski2011chronological,sarychev2006lie} for applications in geometric control theory. 
	This representation is noteworthy because, in the specific case of linear activation functions, it naturally aligns with the variation of the constants formula. From this perspective, the proposed representation and approach are a proper generalization of the variation of constants formula to the case of models of the form \eqref{eq:Hopfield nonlinear}.

	% When $u$ is constant, one has the following second representation of the solution, which can be used, for instance, for a smooth sigmoid activation function.
	
	The first result of this paper is about solution representation, and it states as follows. The proof is presented in Section~\ref{ss:proof of representation of solution}.
	\begin{theorem}\label{thm:backward representation}
		Let $T>0$ and $(x^0, u)\in\R^d\times\R^k$. The solution $x\in C^3([0, T];\R^d)$ of \eqref{eq:Hopfield nonlinear} can be expressed  as
		\begin{equation}\label{eq:backward representation}
			x(t) = \phi_t\left(x^0+\int_{0}^{t}D\psi_s(x(s))Bu\,ds\right),\quad\forall t \in [0, T].
		\end{equation}
		% for all $t \in [0, T] $.
	\end{theorem}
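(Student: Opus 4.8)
The plan is to establish a nonlinear analogue of the variation-of-constants formula by factoring the forced trajectory through the unforced flow. Concretely, I would introduce the auxiliary curve $z(t) := \psi_t(x(t))$, so that, since $\phi_t\circ\psi_t=\idty$, the forced state automatically satisfies $x(t)=\phi_t(z(t))$ for all $t\in[0,T]$, with $z(0)=\psi_0(x(0))=x^0$. The representation \eqref{eq:backward representation} then reduces to showing that $z$ solves the simple (nonautonomous but now explicitly integrable) ODE $\dot z(t)=D\psi_t(x(t))Bu$.

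To derive this ODE, I would differentiate the identity $x(t)=\phi_t(z(t))$ in $t$, carefully separating the two ways in which $t$ enters: the explicit dependence through the flow parameter and the implicit dependence through $z(t)$. Using the joint $C^2$-regularity of $(t,x)\mapsto\phi_t(x)$ together with the $C^3$-regularity of $x$ (which guarantees $z\in C^1$, so that the chain rule applies), this gives $\dot x(t)=(\partial_t\phi_t)(z(t))+D\phi_t(z(t))\dot z(t)$, where $(\partial_t\phi_t)(z(t))$ denotes the derivative in the flow parameter at the fixed argument $z(t)$. By the flow equation \eqref{eq::nonlinear flow}, this explicit term equals $N(\phi_t(z(t)))=N(x(t))$. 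Comparing with the governing equation $\dot x(t)=N(x(t))+Bu$ from \eqref{eq:Hopfield nonlinear} cancels the drift and leaves $D\phi_t(z(t))\dot z(t)=Bu$.

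Since $D\phi_t(z(t))$ is invertible, I would solve for $\dot z(t)=\left[D\phi_t(z(t))\right]^{-1}Bu$ and then invoke the inverse-differential identity \eqref{inverse of the differential of the flow}, which yields $\left[D\phi_t(z(t))\right]^{-1}=D\psi_t(\phi_t(z(t)))=D\psi_t(x(t))$. Hence $\dot z(t)=D\psi_t(x(t))Bu$. Integrating from $0$ to $t$ with $z(0)=x^0$ gives $z(t)=x^0+\int_0^t D\psi_s(x(s))Bu\,ds$, and applying $\phi_t$ to both sides recovers \eqref{eq:backward representation}.

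The routine parts are the integration and the final substitution; the conceptual crux---and the only place where genuine care is needed---is the differentiation step, namely correctly isolating the explicit flow-parameter derivative $\partial_t\phi_t$ (which reproduces the drift $N$) from the chain-rule contribution $D\phi_t(z)\dot z$ (which must absorb the input). This is precisely what makes the drift cancel and forces the input to be carried entirely by the differential of the flow, which is the whole point of the representation. An alternative that avoids positing $x(t)=\phi_t(z(t))$ is to differentiate $z(t)=\psi_t(x(t))$ directly and use the equivariance identity $D\psi_t(x)N(x)=N(\psi_t(x))$ (obtained by differentiating the group law $\psi_{t+s}=\psi_t\circ\psi_s$ in $s$), which produces the same ODE for $z$; I would prefer the first route since it uses only the already-stated identity \eqref{inverse of the differential of the flow}.
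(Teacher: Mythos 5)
Your proposal is correct and follows essentially the same route as the paper: both set $z(t)=\psi_t(x(t))$, differentiate $x(t)=\phi_t(z(t))$ so that the drift cancels, and invoke \eqref{inverse of the differential of the flow} to obtain $\dot z(t)=D\psi_t(x(t))Bu$, which is then integrated. The paper's proof additionally establishes the converse (that any $x$ satisfying \eqref{eq:backward representation} solves \eqref{eq:Hopfield nonlinear} and belongs to $C^3([0,T];\R^d)$), but that direction is not needed for the statement as phrased.
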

	
	When the dynamics are linear, this representation reduces to the familiar form. 
	
	\begin{corollary}\label{cor:solution in the linear case}
		Let $T>0$ and $(x^0, u)\in\R^d\times\R^k$. If $f_i(s)=s$, the solution $x\in C^\infty([0, T];\R^d)$ to~\eqref{eq:Hopfield nonlinear} reads
		\begin{equation}\label{eq:solution linear}
			x(t) = e^{tA}x^0+\int_0^te^{(t-s)A}Bu\,ds\qquad\forall t\in [0, T].
		\end{equation}
	\end{corollary}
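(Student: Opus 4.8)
The plan is to specialize the backward representation of Theorem~\ref{thm:backward representation} to the linear activation case and read off the variation-of-constants formula. First I would observe that when $f_i(s)=s$ for every $i$, the firing-rate map becomes the identity, $f(x)=x$, so the drift in \eqref{eq:nonlinear vector field N} reduces to $N(x) = -Dx + Wx = (-D+W)x = Ax$ with $A$ as in Notation~\ref{nota:matrix A}. The uncontrolled dynamics \eqref{eq::nonlinear flow} then become the linear ODE $\frac{d}{dt}\phi_t(x^0) = A\phi_t(x^0)$ with $\phi_0(x^0)=x^0$, whose unique solution is $\phi_t(x) = e^{tA}x$; in particular $\phi_t$ is a linear map on $\R^d$ for each $t$.

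Next I would compute the two ingredients entering the representation \eqref{eq:backward representation}. Since $\psi_t = \phi_{-t}$, we get $\psi_t(x) = e^{-tA}x$, which is again linear, so its differential is constant in the base point: $D\psi_s(y) = e^{-sA}$ for every $y\in\R^d$, and in particular $D\psi_s(x(s)) = e^{-sA}$. Substituting this into \eqref{eq:backward representation} and using the linearity of $\phi_t$ to pull the matrix exponential through the vector-valued argument yields
\[
x(t) = e^{tA}\left(x^0 + \int_0^t e^{-sA} Bu\, ds\right) = e^{tA}x^0 + \int_0^t e^{(t-s)A} Bu\, ds,
\]
which is exactly \eqref{eq:solution linear}. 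The $C^\infty$ regularity is immediate, since the right-hand side is a smooth (indeed analytic) function of $t$.

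There is essentially no obstacle here: the whole point of the corollary is that, under linear activations, the nonlinear flow collapses to a matrix exponential and its differential degenerates into the constant matrix $e^{-sA}$, so the abstract representation \eqref{eq:backward representation} reduces to the classical formula. The only step demanding a line of care is the interchange of the linear map $\phi_t$ with the integral, which is justified because $\phi_t$ is linear and bounded while the integrand is continuous on the compact interval $[0,T]$. I note that the identity could equally well be verified directly by differentiating \eqref{eq:solution linear} and checking it solves $\dot{x}=Ax+Bu$ with $x(0)=x^0$, but deriving it as a special case of Theorem~\ref{thm:backward representation} makes precise the sense in which the proposed representation is a proper generalization of the variation-of-constants formula.
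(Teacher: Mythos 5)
Your proposal is correct and follows essentially the same route as the paper: specialize Theorem~\ref{thm:backward representation} by noting that $N(x)=Ax$, hence $\phi_t(x)=e^{tA}x$, $\psi_t(x)=e^{-tA}x$, and $D\psi_s(x(s))=e^{-sA}$, then substitute into \eqref{eq:backward representation}. The extra care you take in justifying the interchange of the linear map $\phi_t$ with the integral is a harmless elaboration of what the paper leaves implicit.
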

	
	\begin{proof}
		In this case, $N(x)=Ax$ for every $x\in\R^d$ where $A$ is introduced in Notation~\ref{nota:matrix A}. In this case, $\phi_t(x)=e^{tA}x$ and $\psi_t(x)=e^{-tA}x$  for every $t\in\R$. It follows that $D\psi_t(x(t))=e^{-tA}$ and the result then follows by~\eqref{eq:backward representation}. 
	\end{proof}
	
	We conclude this section by presenting a second representation of the solution to \eqref{eq:Hopfield nonlinear}. It
	% will later serve as the basis for deriving necessary and sufficient conditions for the existence of constant controls. Importantly, this formulation 
	can be viewed as a natural extension of the linear case given in Corollary~\ref{cor:solution in the linear case}, particularly at the final time horizon $T$. The proof follows similar lines to that of Theorem~\ref{thm:backward representation}.% and is therefore omitted.
	
	\begin{theorem}\label{thm:forward representation}
		Let $T > 0$ and $(x^0, u) \in \R^d \times \R^k$. Then the solution $x \in C^3([0, T]; \R^d)$ of \eqref{eq:Hopfield nonlinear} can be expressed as
		\begin{equation}\label{eq:forward representation}
			x(t) = \psi_{T-t} \left( \phi_T(x^0) + \int_{0}^{t} D\phi_{T-s}(x(s)) B u \, ds \right)
		\end{equation}
		for all $t \in [0, T] $.
	\end{theorem}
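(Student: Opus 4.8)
The plan is to mirror the proof of Theorem~\ref{thm:backward representation}, but now transporting the state \emph{forward} along the flow to the horizon $T$ rather than back to time $0$. Concretely, I would introduce the auxiliary curve
\begin{equation}
	y(t) = \phi_{T-t}(x(t)),\qquad t\in[0,T],
\end{equation}
where $x\in C^3([0,T];\R^d)$ is the unique solution of \eqref{eq:Hopfield nonlinear}, and aim to identify $y$ with the bracketed quantity in \eqref{eq:forward representation}. Since $\psi_{T-t}=\phi_{-(T-t)}$ is the inverse of $\phi_{T-t}$, we have $x(t)=\psi_{T-t}(y(t))$ for free, so the entire statement reduces to proving that $y(t)=\phi_T(x^0)+\int_0^t D\phi_{T-s}(x(s))Bu\,ds$.

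To establish this, I would differentiate $y$ in $t$. Writing $\Phi(t,x)=\phi_{T-t}(x)$ and using that $(t,x)\mapsto\phi_t(x)$ is $C^2$, the chain rule gives $\dot{y}(t)=\partial_t\Phi(t,x(t))+D\phi_{T-t}(x(t))\dot{x}(t)$. The time-partial is handled by the flow equation \eqref{eq::nonlinear flow}: since $\tfrac{d}{ds}\phi_s=N\circ\phi_s$ and $s=T-t$ decreases in $t$, one gets $\partial_t\Phi(t,x)=-N(\phi_{T-t}(x))$. Substituting $\dot{x}(t)=N(x(t))+Bu$ from \eqref{eq:Hopfield nonlinear} then yields
\begin{equation}
	\dot{y}(t)=-N\big(\phi_{T-t}(x(t))\big)+D\phi_{T-t}(x(t))\,N(x(t))+D\phi_{T-t}(x(t))Bu.
\end{equation}

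The crux is the flow-invariance identity $D\phi_s(x)N(x)=N(\phi_s(x))$, valid for all $s\in\R$ and $x\in\R^d$, which cancels the first two terms and leaves $\dot{y}(t)=D\phi_{T-t}(x(t))Bu$. This identity---the statement that a vector field is invariant under its own flow---is the only substantive point, and I expect it to be the main obstacle; it follows by differentiating the one-parameter group relation $\phi_{s+\tau}=\phi_s\circ\phi_\tau$ at $\tau=0$, where the left side gives $N(\phi_s(x))$ by \eqref{eq::nonlinear flow} and the chain rule on the right gives $D\phi_s(x)N(x)$. (It is presumably the same identity, applied to $\psi_t$ via \eqref{eq::nonlinear flow psi}, that drives Theorem~\ref{thm:backward representation}.)

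With $\dot{y}(t)=D\phi_{T-t}(x(t))Bu$ and the initial value $y(0)=\phi_T(x(0))=\phi_T(x^0)$, integrating from $0$ to $t$ gives $y(t)=\phi_T(x^0)+\int_0^t D\phi_{T-s}(x(s))Bu\,ds$. Applying $\psi_{T-t}$ to both sides and using $\psi_{T-t}\circ\phi_{T-t}=\idty$ then recovers \eqref{eq:forward representation}. The only remaining bookkeeping is regularity---that $s\mapsto D\phi_{T-s}(x(s))$ is continuous, so the integral is well defined and the fundamental theorem of calculus applies---which follows from the $C^2$ dependence of the flow recorded just before the statement, together with the $C^3$ regularity of $x$.
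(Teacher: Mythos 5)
Your proof is correct and takes essentially the same approach as the paper, which establishes the backward representation via the substitution $x(t)=\phi_t(y(t))$ and states that the forward version ``follows similar lines''; your auxiliary curve $y(t)=\phi_{T-t}(x(t))$ with $y(0)=\phi_T(x^0)$ is precisely that analogue, and your identification of the push-forward identity $D\phi_s(x)N(x)=N(\phi_s(x))$ as the crux is accurate. The only cosmetic difference is one of bookkeeping: the paper's template differentiates the inverse relation $x(t)=\psi_{T-t}(y(t))$, so the drift terms cancel immediately and the identity never needs to be named, whereas you differentiate $y(t)=\phi_{T-t}(x(t))$ directly and must invoke it explicitly --- both are valid and equivalent.
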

	
	\begin{remark}\label{rmk:on piecewise control}
		Although Theorems~\ref{thm:forward representation} and~\ref{thm:backward representation} are stated for constant inputs $u \in \mathbb{R}^k$, they remain valid for time-dependent inputs $u \in L^1([0, T]; \mathbb{R}^k)$. Note that this is a specific case of \cite[Theorem 3.2]{tamekue2025control}. In this case, the corresponding solution $x(\cdot)$ to~\eqref{eq:Hopfield nonlinear} belongs to $C^0([0, T]; \mathbb{R}^d)$. This observation is particularly relevant, as we will later consider step constant controls---i.e., piecewise constant controls defined on $[0, T]$.
	\end{remark}
	
	\section{Controllability of the neural networks with step function}\label{s:exact controllability}
	In this section, we analyze the controllability of~\eqref{eq:Hopfield nonlinear} under constant inputs over short time horizons and extend the approach to step-function controls for longer-horizon transfers in the fully nonlinear setting. When the activation functions are linear, the synthesized constant control remains valid for both short and long time horizons.
	
	Recall, e.g., from \cite[Remark~4.2.3]{tucsnak2009observation} that a step function on $[0, T]$ is a piecewise constant function defined over $[0, T]$, i.e., constant on each interval of a partition of $[0, T]$. Then, a constant function corresponds to a special case of a step function with a single constant value across the entire interval.
	
	Let us introduce the following.
	
	\begin{definition}[Step controllability]\label{def::Linear control notion}
		Let $T>0$.  System~\eqref{eq:Hopfield nonlinear} is step controllable over the time interval $[0, T]$ if, for all $x^0, x^1\in \R^d$, there exists a step function $u$ on $[0, T]$ such that the solution of \eqref{eq:Hopfield nonlinear} with $x(0)=x^0$ satisfies $x(T) = x^1$.
	\end{definition}

	\subsection{The case of linear activation functions}\label{ss:controllability linear activation function}
	
	To gain insight into the controllability of the nonlinear system~\eqref{eq:Hopfield nonlinear} under constant control, it is instructive to first analyze the corresponding linearized model---a strategy that has been clearly articulated and effectively motivated in prior work, notably by \cite{liu2011controllability}.
	
	Consider first the linear neural network
	\begin{equation}\label{eq:linear model}
		\dot{x}(t) = Ax(t) + Bu, \qquad x(0) = x^0 \in \mathbb{R}^d,
	\end{equation}
	with $A = -D + W$ (corresponding to $f_i(s) = s$ for all $i$) and constant input $u \in \mathbb{R}^k$.  System \eqref{eq:linear model} is controllable on $[0,T]$ if and only if the Kalman rank condition holds.
	% \begin{equation}\label{eq:Kalman rank condition}
		%     \operatorname{rank}\begin{bmatrix}B & AB & \dots & A^{d-1}B\end{bmatrix} = d
		% \end{equation}
	% holds. 
	Equivalently, the invertibility of the controllability Gramian
	% \begin{equation}\label{eq:controllability Gramian}
		%     W_c = \int_{0}^{T} e^{(T-t)A} B B^{\!\top} e^{(T-t)A^\top}\,dt
		% \end{equation}
	also guarantees controllability over $[0,T]$. Moreover, when these conditions are met, one can synthesize a time-varying control $\underline{u}\in L^\infty((0,T);\mathbb{R}^k)$ of minimal $L^2$-energy that steers \eqref{eq:linear model} from any $x^0$ to any $x^1$ in time $T$ (see, e.g., \cite{coron2007control,tucsnak2009observation}).
	
	However, this synthesis does not systematically extend to \textit{constant control} capable of achieving the same objective. If the system is controllable, and we assume the existence of a constant input $\tilde{u} \in \mathbb{R}^k$ that steers the system from $x^0 \in \mathbb{R}^d$ to $x^1 \in \mathbb{R}^d$ over the interval $[0, T]$, then { from \eqref{eq:solution linear} and $x(T)=x^1$, %we get 
		\[
		\int_0^Te^{(T-s)A}\,dsB\tilde{u} = x^1 - e^{T A} x^0
		\]
		so that left multiplying by $A$ and using the identity
		\begin{equation}\label{eq:trick}
			A \int_0^T e^{(T-s)A} \, ds=\int_0^T\hspace{-0.2cm}\left(-\frac{d}{ds} e^{(T-s)A} \right) ds =  e^{T A} - \idty
		\end{equation}
		% \begin{IEEEeqnarray*}{rCl}\label{eq:trick}
			% A \int_0^T e^{(T-s)A} \, ds &=& \int_0^T A e^{(T-s)A} \, ds\nonumber\\ 
			% &=& \int_0^T \left(-\frac{d}{ds} e^{(T-s)A} \right) ds =  e^{T A} - \idty
			% \end{IEEEeqnarray*}
		we find
	}
	\begin{equation}\label{eq:a prior analysis}
		\left(e^{T A} - \idty\right) B \tilde{u} = A \left(x^1 - e^{T A} x^0 \right).
	\end{equation}
	However, \eqref{eq:a prior analysis} can be ill-posed, as shown by the following counter-example.
	% {\color{red!80!magenta}
		\begin{remark}\label{rmk:a priori analysis}
			Consider the case where $k = d = 2$, $T = 2\pi/\omega$, $B = \idty$, and
			\[
			A = 
			\begin{bmatrix}
				0 & -\omega \\
				\omega & 0
			\end{bmatrix},\quad
			e^{TA} = 
			\begin{bmatrix}
				\cos(2\pi) & -\sin(2\pi) \\
				\sin(2\pi) & \cos(2\pi)
			\end{bmatrix} = \idty.
			\]
			The linear system \eqref{eq:linear model}, equipped with time-varying control, is fully actuated and hence controllable, as guaranteed by the Kalman rank condition. However, any constant control $\tilde{u} \in \mathbb{R}^2$ must satisfy \eqref{eq:a prior analysis}. Since $e^{TA} - \idty = 0$, it follows that no constant control $\tilde{u} \in \mathbb{R}^2$ can steer $x^0 = 0$ to a nonzero target state $x^1 \neq 0$ over the interval $[0, T]$. Notably, $T = 2\pi/\omega$ can be made arbitrarily small by choosing $\omega \gg 2\pi$.
			
			On the other hand, using the Gramian-based synthesis, one finds that $W_c^{-1} = \idty/T$, and
			\begin{equation}
				\underline{u}(t) = \frac{1}{T}e^{(T-t)A^\top}x^1,\qquad t \in [0, T]
			\end{equation}
			is the time-varying control of minimal $L^2$-norm that steers $x^0 = 0$ to $x^1 \neq 0$. In particular, $\|\underline{u}\|_{L^2}^2 = \langle W_c^{-1}x^1, x^1 \rangle = |x^1|^2/T$. Taking $x^1 = (\sqrt{T/\omega}, 0)^\top$ yields $\|\underline{u}\|_{L^2}^2 = 1/\omega$, showing that if $\omega \gg 1$, then $\underline{u}$ achieves the transfer from $x^0 = 0$ to $x^1 = (\sqrt{T/\omega}, 0)^\top$ in $T > 0$ with small effort.
		\end{remark}
		% }
	
	Remark~\ref{rmk:a priori analysis} suggests that the controllability of a linear system does not imply the existence of a constant control that solves the control objective.
	
	\begin{proposition}\label{pro:controllability in the linear case}
		% {\color{blue!80!magenta}
			Let $T > 0$ and $(x^0, x^1) \in(\R^d)^2$. Assume that for any $\ell \in\Z$, 
			$i \frac{2\pi \ell}{T}\not\in\sigma(A)$. Then, the solution $x(\cdot)$ to \eqref{eq:linear model} satisfies $x(T)=x^1$, if and only if, $u\in\R^k$ solves
			\begin{equation}\label{eq:linear control particular}
				Bu = \left(e^{T A}-\idty\right)^{-1}A\left(x^1-e^{TA}x^0\right).
			\end{equation}
			
			% }
	\end{proposition}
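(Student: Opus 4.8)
The plan is to read off the endpoint condition from the explicit linear solution and then reduce the entire statement to the invertibility of a single matrix. First I would invoke Corollary~\ref{cor:solution in the linear case}: evaluating \eqref{eq:solution linear} at $t=T$ and imposing $x(T)=x^1$ shows that $u$ achieves the transfer if and only if $M Bu = x^1 - e^{TA}x^0$, where $M := \int_0^T e^{(T-s)A}\,ds$. Thus the proposition reduces to showing that, under the spectral hypothesis, $M$ is invertible with $M^{-1} = (e^{TA}-\idty)^{-1}A$; rearranging the endpoint condition then yields exactly \eqref{eq:linear control particular}, and both implications follow at once.

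The heart of the argument is therefore two invertibility claims. Taking $\ell=0$ in the hypothesis gives $0\notin\sigma(A)$, so $A$ is invertible. For the second claim I would use the spectral mapping theorem, $\sigma(e^{TA}) = \{e^{T\lambda}\mid \lambda\in\sigma(A)\}$: the matrix $e^{TA}-\idty$ is singular precisely when $1\in\sigma(e^{TA})$, i.e. when $e^{T\lambda}=1$ for some $\lambda\in\sigma(A)$, equivalently $T\lambda\in 2\pi i\,\Z$, equivalently $\lambda = i\,2\pi\ell/T\in\sigma(A)$ for some $\ell\in\Z$. This is exactly the situation excluded by the hypothesis, so $e^{TA}-\idty$ is invertible.

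With both invertibilities in hand, I would link them through the identity \eqref{eq:trick}, namely $AM = e^{TA}-\idty$. Since $A$ commutes with every $e^{sA}$, and hence with the integral $M$, one also has $MA = e^{TA}-\idty$, so $M = A^{-1}(e^{TA}-\idty)$ is a product of invertible matrices with $M^{-1} = (e^{TA}-\idty)^{-1}A$. Substituting into the endpoint condition $M Bu = x^1 - e^{TA}x^0$ gives $Bu = (e^{TA}-\idty)^{-1}A(x^1 - e^{TA}x^0)$, which is \eqref{eq:linear control particular}.

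I expect the only genuinely delicate point to be the logical status of the a priori relation \eqref{eq:a prior analysis}. Left-multiplying the endpoint condition by $A$ produces \eqref{eq:a prior analysis} only as a \emph{necessary} condition, and sufficiency can fail when $A$ is singular, since spurious solutions may be introduced in the kernel of $A$. The role of the case $\ell=0$ of the hypothesis is precisely to guarantee that $A$ is invertible, so that this step loses no information and the necessary condition becomes equivalent to the transfer $x(T)=x^1$. Working directly with $M$ and its inverse, as above, makes this equivalence automatic and avoids arguing separately about $\ker A$.
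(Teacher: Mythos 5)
Your proof is correct and follows essentially the same route as the paper's: both rest on the explicit solution \eqref{eq:solution linear}, the identity \eqref{eq:trick}, and the invertibility of $A$ and $e^{TA}-\idty$ under the spectral hypothesis. The only difference is organizational---you establish $M=\int_0^T e^{(T-s)A}\,ds$ is invertible with $M^{-1}=(e^{TA}-\idty)^{-1}A$ so that both implications follow at once, and you spell out the spectral mapping argument for the invertibility of $e^{TA}-\idty$, whereas the paper leaves that implicit and handles the converse separately by computing $Ax(T)=Ax^1$ and cancelling $A$.
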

	The proof of the following result is immediate.
	% {\color{blue!80!magenta} 
		% Under hypotheses of Proposition~\ref{pro:controllability in the linear case}, the following result gives a necessary and sufficient condition for \eqref{eq:linear control particular} to admit at least one solution $u\in\R^k$. 
		\begin{corollary}\label{cor:necessary and sufficient cond for solvability linear}
			% Let $T > 0$ and $(x^0, x^1) \in(\R^d)^2$. Assume that for any $\ell \in\Z$, $\lambda = i \frac{2\pi \ell}{T}$ is not an eigenvalue of $A$. Then,
			Under hypotheses of Proposition~\ref{pro:controllability in the linear case}, \eqref{eq:linear control particular} has at least one solution $u\in\R^k$, if and only if, 
			\begin{equation}
				\left(e^{T A}-\idty\right)^{-1}A\left(x^1-e^{TA}x^0\right)\in\im B.
			\end{equation}
			In this case, $u\in\R^k$ given by
			\begin{equation}
				u = B^\dagger\,\left(e^{T A}-\idty\right)^{-1}A\left(x^1-e^{TA}x^0\right)
			\end{equation}
			is the corresponding least-norm constant control.
		\end{corollary}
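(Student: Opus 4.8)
The plan is to convert the endpoint condition $x(T)=x^1$ into a single linear equation for $Bu$, and then to show that under the spectral hypothesis the relevant matrix is invertible, which makes the claimed equivalence transparent. Setting $G:=\int_0^T e^{(T-s)A}\,ds$, Corollary~\ref{cor:solution in the linear case} evaluated at $t=T$ gives $x(T)=e^{TA}x^0+G\,Bu$, so that
\[
x(T)=x^1 \iff G\,Bu = x^1-e^{TA}x^0 .
\]
The whole proposition therefore reduces to inverting $G$ and identifying $G^{-1}$ with $(e^{TA}-\idty)^{-1}A$.

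First I would extract two consequences of the hypothesis. Over $\C$ the eigenvalues of $e^{TA}-\idty$ are $e^{T\lambda}-1$ with $\lambda\in\sigma(A)$, and $e^{T\lambda}-1=0$ forces $T\lambda\in 2\pi i\,\Z$, i.e. $\lambda=i\,2\pi\ell/T$ for some $\ell\in\Z$; the assumption rules this out, so $e^{TA}-\idty$ is invertible. Taking $\ell=0$ in the same assumption gives $0\notin\sigma(A)$, hence $A$ is invertible as well.

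Next I would invoke the algebraic identity \eqref{eq:trick}, which reads $A\,G=e^{TA}-\idty$. Since $G=\int_0^T e^{\tau A}\,d\tau$ (after the substitution $\tau=T-s$) is a power series in $A$, it commutes with $A$, so $A\,G=G\,A=e^{TA}-\idty$. With $A$ and $e^{TA}-\idty$ both invertible, the factorization $G=A^{-1}\bigl(e^{TA}-\idty\bigr)$ exhibits $G$ as a product of invertible matrices, hence $G$ is invertible with $G^{-1}=(e^{TA}-\idty)^{-1}A$. Applying $G^{-1}$ to both sides of the displayed equivalence yields exactly \eqref{eq:linear control particular}, establishing both implications simultaneously.

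The only genuinely delicate point is the reverse direction, i.e. not losing information when passing from $G\,Bu=x^1-e^{TA}x^0$ to the stated formula. The a priori analysis \eqref{eq:a prior analysis} is obtained by left multiplication by $A$, which is not reversible when $A$ is singular; the remedy is to bypass that step entirely and invert $G$ directly, which is legitimate precisely because the hypothesis (through the cases $\ell\neq 0$ and $\ell=0$) guarantees that both factors of $G=A^{-1}(e^{TA}-\idty)$ are invertible. I expect the eigenvalue bookkeeping for $e^{TA}-\idty$ to be the main thing to state carefully, but it is otherwise routine.
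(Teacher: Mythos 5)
Your argument is a careful (and correct) re-derivation of Proposition~\ref{pro:controllability in the linear case}: you show that under the spectral hypothesis both $A$ and $e^{TA}-\idty$ are invertible, that $G=\int_0^T e^{(T-s)A}\,ds$ factors as $A^{-1}\left(e^{TA}-\idty\right)$, and hence that $x(T)=x^1$ is equivalent to \eqref{eq:linear control particular}. But that is not what the corollary asserts. The corollary takes \eqref{eq:linear control particular} as its starting point and makes two claims about the linear system $Bu=v$ with $v:=\left(e^{TA}-\idty\right)^{-1}A\left(x^1-e^{TA}x^0\right)$: (i) it is solvable if and only if $v\in\im B$, and (ii) when it is solvable, $u=B^\dagger v$ is the solution of minimal Euclidean norm. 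Your proposal never mentions $\im B$, $B^\dagger$, or the least-norm property, so the actual content of the statement is left unproved; what you have written instead is a proof of the preceding proposition, which the paper has already established.

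The missing steps are short --- which is why the paper declares the proof immediate --- but they are the whole point and should be stated. Part (i) is just the definition of $\im B$ applied to the equation $Bu=v$. For part (ii), recall that $BB^\dagger$ is the orthogonal projector onto $\im B$, so $v\in\im B$ gives $B(B^\dagger v)=v$, i.e.\ $B^\dagger v$ is indeed a solution; moreover the full solution set is $B^\dagger v+\ker B$, while $B^\dagger v\in\im (B^\top)=(\ker B)^\perp$, so Pythagoras yields $|B^\dagger v|\le |u'|$ for every solution $u'$, with equality only when $u'=B^\dagger v$. As a side remark, your observation that one can invert $G$ directly (using $G=A^{-1}(e^{TA}-\idty)$ with both factors invertible) rather than left-multiplying by $A$ is a clean way to justify the reverse implication in the proposition, but it belongs to that proof, not to this one.
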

		% }
	\begin{proof}[Proof of Proposition~\ref{pro:controllability in the linear case}] 
		Under the assumption on the eigenvalues of $A$, the matrices $A$ and $e^{TA}-\idty$ are invertible. By Corollary~\ref{cor:solution in the linear case}, the solution $x\in C^\infty([0, T];\R^d)$ of \eqref{eq:linear model} with a constant control $u\in\R^k$ is given by \eqref{eq:solution linear}. Let us show that $x(T)=x^1$ is equivalent to \eqref{eq:linear control particular}. If $x(T)=x^1$, then one obtains immediately that $u\in\R^k$ solves \eqref{eq:linear control particular}
		by invoking the same arguments used to derive \eqref{eq:a prior analysis}. Conversely, if \eqref{eq:linear control particular} is satisfied, 
		one deduces from \eqref{eq:trick} and \eqref{eq:solution linear} that
		\begin{equation}
			Ax(T) = Ae^{TA}x^0+\left(e^{T A}-\idty\right)Bu=Ax^1
		\end{equation}
		%      \begin{IEEEeqnarray}{rCl}
			%     Ax(T) &=&  Ae^{TA}x^0+A\int_0^Te^{(T-s)A}\,dsBu\nonumber\\
			%     &=&Ae^{TA}x^0+\left(e^{T A}-\idty\right)Bu=Ax^1
			% \end{IEEEeqnarray}
		so that $x(T)=x^1$ since $A$ is invertible. 
		% The last part of the theorem follows immediately since if $Bu=I$ has at least one solution $u\in\R^k$, then $\overline{u}=B^\dagger\,I\in\R^k$ is the solution with the least norm.
	\end{proof}
	\begin{remark}\label{rmk:linear case}
		% {\color{blue!80!magenta}   
			If $\|W\|<\lambda_{\min}(D)$, then the matrix $e^{TA}-\idty$ is invertible for all $T>0$, as ensured by Lemma~\ref{lem:dissipativity of A} and the Neumann series expansion. Moreover, for all $x\in\R^d$, we have
			\[
			|A x| \ge |D x| - |W x| \ge \left( \lambda_{\min}(D) - \|W\| \right) |x|
			\]
			which implies that $A$ is invertible as well.
			
			Therefore, in Proposition~\ref{pro:controllability in the linear case}, the spectral assumptions 
			% that $\lambda = i 2\pi \ell / T $ is not an eigenvalue of $A$ for any $\ell\in\Z$ 
			become relevant only when $\|W\| \ge \lambda_{\min}(D)$.
	\end{remark}

	\begin{remark}
		\label{rmk:constant control synthesis}
		Remark~\ref{rmk:a priori analysis} and Proposition~\ref{pro:controllability in the linear case} highlight a key distinction between constant and time-varying control synthesis in linear systems. 
		
		While time-varying control synthesis depends solely on the structural properties of the pair $(A, B)$--as captured by the invertibility of the controllability Gramian---constant control synthesis is more restrictive. In particular, it depends not only on the matrices $A$ and $B$, but also crucially on the time horizon $T>0$, the initial and target states $x^0$ and $x^1$, and on the spectrum of $A$. As shown in Remark~\ref{rmk:a priori analysis}, even a fully actuated system may fail to connect certain states under constant input when $e^{TA} = \idty$, rendering \eqref{eq:a prior analysis}, then \eqref{eq:linear control particular} unsolvable.
	\end{remark}
	
	\subsection{The case of nonlinear activation functions}\label{ss:controllability nonlinear activation function}
	
	Proposition~\ref{pro:controllability in the linear case} highlights that controllability under constant control depends intricately on the time horizon~$T$, the spectrum of $DN(\cdot)$, and the initial and target states $x^0$ and $x^1$.
	% and their differentials~$D\phi_T$ and~$D\psi_T$, as well as the initial and target states~$x^0$ and~$x^1$.
	
	Based on the solution representations of \eqref{eq:Hopfield nonlinear} given in Theorems~\ref{thm:backward representation} and~\ref{thm:forward representation}, we note that the latter naturally extends the linear case, particularly at the final time horizon \( T \). Nonetheless, both formulations provide valid foundations for synthesizing controls that achieve the desired state transfer. The key distinction lies in their temporal structure and directionality, as well as in the specific conditions required to guarantee their applicability.
	% , which influences the nature (forward or backward) of the resulting control.
	
	\subsubsection{Forward nominal-state synthesis}\label{sss:Forward nominal-state synthesis}
	Building on the solution representation of \eqref{eq:Hopfield nonlinear} given in Theorem~\ref{thm:forward representation}, we aim to synthesize step-function controls that achieve the control objective. Unlike the linear case, the control is first derived in an implicit form due to the nonlinear nature of the system. Then, under the assumption of a small time horizon, we provide an explicit expression that is an accurate approximation of the step-function controls that solve the control objective.
	
	We begin by stating a key result on which the constant control synthesis is based. The proof is identical to that presented in Section~\ref{s:smooth and Lipschitz activation function}.
	
	\begin{proposition}\label{pro:smooth and Lipschitz activation function 1} 
		Let $T>0$ and $(x_0, u)\in\R^d\times\R^k$. The solution $x(\cdot)$ of \eqref{eq:Hopfield nonlinear} can be expanded  at time $t=T$ as 
		\begin{IEEEeqnarray}{rCl}\label{eq:sol expansion with remainder 1}
			x(T)&=&\sum_{n=0}^{\infty}(-1)^n\frac{T^{n+1}DN\left(\phi_T(x^0)\right)^n}{(n+1)!}D\phi_T(x^0)Bu\nonumber\\
			&&+\phi_T(x^0)-\varphi_u(T)Bu.
		\end{IEEEeqnarray}
		Here $\varphi_u(T)=\kappa_u(T)+\eta_u(T)$,
		\begin{equation}\label{eq:kappa_u}
			\kappa_u(T) = \sum_{n=1}^{\infty}\int_{0}^{T}\frac{(t-T)^{n+1}}{(n+1)!}\left[\frac{d}{dt}Z_{T-t}^n\right]Q_{T-t}\,dt,
		\end{equation}
		\begin{equation}\label{eq:eta_u}
			\hspace{-0.3cm}\eta_u(T)= \sum_{n=1}^{\infty}\int_{0}^{T}\frac{(t-T)^nZ_{T-t}^{n-1}}{n!}D^2\phi_{T-t}(x(t))\dot{x}(t)\,dt
		\end{equation}
		% where $Z_{T-s}=DN(\phi_{T-s}(x(s)))$ and $D^2\phi_{T-s}(x(s))$ is the hessian matrix of $\phi_{T-s}$ at $x(s)$.
		%  Here $\varphi_u(T)\in\cM_d(\R)$ is given by
		%  \begin{equation}\label{eq:varphi}
			%       \varphi_u(T) = \sum_{n=1}^{\infty}(-1)^n\int_{0}^{T}\frac{(T-t)^{n+1}}{(n+1)!}\left\{\frac{d}{dt}[Z_{T-t}^nQ_{T-t}]\right\}\,dt
			%  \end{equation}
		where $Q_{T-t}=D\phi_{T-t}(x(t))$, $Z_{T-t}=DN(\phi_{T-t}(x(t)))$ and $D^2\phi_{T-t}(x(t))$ is the second derivative of $\phi_{T-t}$ at $x(t)$.
	\end{proposition}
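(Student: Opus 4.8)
The plan is to read the statement as a Taylor-in-$T$ expansion of a single matrix integral, and to obtain that expansion by repeated integration by parts driven by a variational ODE. First I would evaluate the forward representation of Theorem~\ref{thm:forward representation} at the terminal time $t=T$. Since $\psi_{T-t}$ reduces to $\psi_0=\idty$ there, the representation collapses to
\[
x(T) = \phi_T(x^0) + \int_0^T D\phi_{T-s}(x(s))\,Bu\,ds = \phi_T(x^0) + \int_0^T Q_{T-s}\,Bu\,ds,
\]
so the whole claim reduces to expanding the matrix integral $\int_0^T Q_{T-s}\,Bu\,ds$ (with $Bu$ a fixed vector that I factor through all linear operations). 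The term $\phi_T(x^0)$ is already isolated, matching the corresponding summand in the stated formula.

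The engine of the expansion is the differential equation satisfied by $t\mapsto Q_{T-t}=D\phi_{T-t}(x(t))$. Writing $Q_{T-t}=F(T-t,x(t))$ with $F(s,y)=D\phi_s(y)$, differentiating via the chain rule (the flow is $C^2$ by \cite[Chapter~15, Theorem~1]{hirsch1974differential}), and using the variational identity $\partial_s D\phi_s(y)=DN(\phi_s(y))D\phi_s(y)$, I would obtain
\[
\frac{d}{dt}Q_{T-t} = -Z_{T-t}\,Q_{T-t} + D^2\phi_{T-t}(x(t))\,\dot{x}(t),
\]
which cleanly separates a transport term carrying the drift Jacobian $Z_{T-t}$ and an inhomogeneous term carrying $D^2\phi$. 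This is exactly the combination of objects appearing in $\kappa_u$ and $\eta_u$.

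Next I would run repeated integration by parts on $J_0:=\int_0^T Q_{T-s}\,Bu\,ds$, using at the $n$-th stage the antiderivative $v_n(s)=(s-T)^{n+1}/(n+1)!$ of $(s-T)^n/n!$, chosen precisely so that $v_n(T)=0$ and only the lower endpoint survives. Because $Q_{T-0}=D\phi_T(x^0)$ and $Z_{T-0}=DN(\phi_T(x^0))$, the boundary term at $s=0$ produces exactly the $n$-th principal summand $(-1)^n\frac{T^{n+1}}{(n+1)!}DN(\phi_T(x^0))^nD\phi_T(x^0)Bu$; then substituting the ODE for the derivative of $Z_{T-s}^nQ_{T-s}$ splits the remaining integral into (i) a piece proportional to $\frac{d}{ds}Z_{T-s}^n$, which is the $n$-th summand of $\kappa_u(T)$ (and vanishes for $n=0$ since $Z^0=\idty$, explaining why $\kappa_u$ starts at $n=1$), (ii) a piece carrying $D^2\phi\,\dot{x}$, which is the $(n+1)$-th summand of $\eta_u(T)$, and (iii) the next integral $J_{n+1}=\int_0^T\frac{(s-T)^{n+1}}{(n+1)!}Z_{T-s}^{n+1}Q_{T-s}\,Bu\,ds$. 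This gives the recursion $J_n=(\text{principal term }n)-\kappa_u^{(n)}-\eta_u^{(n+1)}+J_{n+1}$, which I would unroll and collect into the three series of the statement, recovering $\varphi_u(T)=\kappa_u(T)+\eta_u(T)$.

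The hard part will be convergence: I must show $J_N\to 0$ and that all three series converge absolutely so the termwise integration by parts and the passage to the limit are legitimate. Here I would lean on Assumption~\ref{ass:general assumption} and the fixed constants. Since $\|f_i'\|_\infty=1$ and $DN(y)=-D+W\operatorname{diag}(f'(y))$, one has $\|Z_{T-s}\|\le \lambda_{\max}(D)+\|W\|=\Lambda$ uniformly; Gronwall applied to the variational equation bounds $\|Q_{T-s}\|\le e^{\Lambda T}$; and boundedness of $f''$ (through $\Lambda_1=\|W\|\|f''\|_\infty$) together with $\dot{x}$ continuous on $[0,T]$ controls $D^2\phi$ and $\frac{d}{ds}Z_{T-s}^n$, the latter costing a factor $n\Lambda^{n-1}\Lambda_1$. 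These yield $\|J_N\|\le \frac{(\Lambda T)^N\,T\,e^{\Lambda T}}{(N+1)!}\,|Bu|\to 0$ and make the principal, $\kappa_u$, and $\eta_u$ series summable, so the expansion is valid for every $T>0$. The bookkeeping of the factorials and alternating signs across the recursion, and verifying that the collected correction sums reindex exactly to the stated $\kappa_u(T)$ and $\eta_u(T)$, is where the main care is required.
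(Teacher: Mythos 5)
Your proposal is correct and follows essentially the same route as the paper: the paper proves this proposition by the argument of Section~\ref{s:smooth and Lipschitz activation function} applied to the forward representation \eqref{eq:forward representation} at $t=T$, i.e., repeated integration by parts of $\int_0^T D\phi_{T-s}(x(s))Bu\,ds$ with antiderivatives $(s-T)^{n+1}/(n+1)!$, using the variational identity $\tfrac{d}{dt}Q_{T-t}=-Z_{T-t}Q_{T-t}+D^2\phi_{T-t}(x(t))\dot{x}(t)$ (cf.\ Lemma~\ref{lem:on the SC}) and the Gronwall-type bounds of Lemmas~\ref{lem:general estimates flows} and~\ref{lem:well-defined xi and chi} for convergence. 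Your bookkeeping of the boundary terms, the splitting into the $\kappa_u$ and $\eta_u$ series, and the tail estimate $J_N\to 0$ all match the paper's treatment.
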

	
	The implicit constant control synthesis of this section is therefore presented in the following theorem.
	
	\begin{theorem}\label{thm:Forward nominal-state synthesis}
		Let $T > 0$ and $(x^0, x^1)\in(\R^d)^2$. Assume that for any $\ell\in\Z$, 
		$i \frac{2\pi \ell}{T}\not\in\sigma(DN(\phi_T(x^0)))$. Then, the solution $x(\cdot)$ to \eqref{eq:Hopfield nonlinear} satisfies $x(T)=x^1$, if and only if, $u\in\R^k$ solves
		\begin{equation}\label{eq:forward nominal-state synthesis}
			\left[\idty-\cA_T(x^0)\varphi_u(T)\right]Bu=\cA_T(x^0)(x^1-\phi_T(x^0)).
		\end{equation}
		Here $\varphi_u(T)\in\cM_d(\R)$ is defined in Proposition~\ref{pro:smooth and Lipschitz activation function 1}, and letting $U_T(x^0)=DN(\phi_T(x^0))$, one has
		\begin{equation}\label{eq:matrix A_T(x^0)}
			\hspace{-0.25cm}\cA_T(x^0) = D\psi_T(\phi_T(x^0))\left(\idty-e^{-TU_T(x^0)} \right)^{-1}U_T(x^0).
		\end{equation}
	\end{theorem}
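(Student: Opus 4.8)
The plan is to start from the exact expansion of $x(T)$ furnished by Proposition~\ref{pro:smooth and Lipschitz activation function 1} and to recast it into the claimed algebraic relation by summing the leading operator series in closed form. Writing $U = U_T(x^0) = DN(\phi_T(x^0))$ and $Q = D\phi_T(x^0)$ for brevity, the expansion \eqref{eq:sol expansion with remainder 1} reads
\begin{equation}
x(T) = \left[\sum_{n=0}^{\infty}(-1)^n\frac{T^{n+1}U^n}{(n+1)!}\right]QBu + \phi_T(x^0) - \varphi_u(T)Bu,
\end{equation}
so the first task is to identify the bracketed operator series as an explicit matrix function of $U$.

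The key step is the identity
\begin{equation}
\sum_{n=0}^{\infty}(-1)^n\frac{T^{n+1}U^n}{(n+1)!} = U^{-1}\left(\idty - e^{-TU}\right),
\end{equation}
which I would obtain by factoring $(-1)^n T^{n+1}U^n = T(-TU)^n$ and applying the scalar power series $\sum_{n=0}^{\infty} w^n/(n+1)! = (e^{w}-1)/w$ at $w = -TU$. This rewriting is legitimate because the spectral hypothesis with $\ell = 0$ forces $0\notin\sigma(U)$, hence $U$ is invertible; the associated matrix function $z\mapsto (1-e^{-Tz})/z$ is entire, so the series converges for every $U$ and coincides with $U^{-1}(\idty - e^{-TU})$ when $U$ is invertible. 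Substituting this closed form and imposing $x(T)=x^1$ yields the equivalent relation
\begin{equation}
x^1 - \phi_T(x^0) = \left[U^{-1}(\idty - e^{-TU})Q - \varphi_u(T)\right]Bu.
\end{equation}

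It then remains to invert the operator $U^{-1}(\idty - e^{-TU})Q$ multiplying $Bu$ in the first term and to recognize its inverse as $\cA_T(x^0)$. Since $U$ and $e^{-TU}$ are functions of the same matrix they commute, while the spectral hypothesis for general $\ell$ guarantees that $\idty - e^{-TU}$ is invertible (indeed $e^{-T\lambda}=1$ forces $\lambda \in \{i\,2\pi\ell/T : \ell\in\Z\}$, which is excluded), and the flow differential $Q$ is invertible with $Q^{-1}=D\psi_T(\phi_T(x^0))$ by \eqref{inverse of the differential of the flow}. Consequently $U^{-1}(\idty - e^{-TU})Q$ is invertible with inverse $Q^{-1}(\idty - e^{-TU})^{-1}U = \cA_T(x^0)$, precisely \eqref{eq:matrix A_T(x^0)}. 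Because every manipulation above is reversible and $\cA_T(x^0)$ is invertible, left-multiplying the displayed relation by $\cA_T(x^0)$ produces \eqref{eq:forward nominal-state synthesis}, and left-multiplying \eqref{eq:forward nominal-state synthesis} by $\cA_T(x^0)^{-1}$ recovers it; this chain of equivalences establishes the biconditional. The main obstacle I anticipate is not conceptual but bookkeeping: carefully tracking the non-commutativity between $Q$ and the functions of $U$ when forming the inverse, and verifying that the single spectral condition simultaneously delivers invertibility of $U$ (from $\ell=0$) and of $\idty - e^{-TU}$ (from $\ell\neq 0$), so that $\cA_T(x^0)$ is well defined throughout.
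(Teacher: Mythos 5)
Your proposal is correct and follows essentially the same route as the paper's proof: both start from the expansion in Proposition~\ref{pro:smooth and Lipschitz activation function 1}, sum the operator series into a closed form involving $\idty-e^{-TU_T(x^0)}$ (you factor out $U_T(x^0)^{-1}$ directly using the $\ell=0$ case of the spectral hypothesis, while the paper left-multiplies by $U_T(x^0)$ first and so never writes $U_T(x^0)^{-1}$), and then obtain \eqref{eq:forward nominal-state synthesis} by multiplying by the invertible factors $(\idty-e^{-TU_T(x^0)})^{-1}$ and $D\psi_T(\phi_T(x^0))$. The paper phrases the converse by equating two expressions for $\cA_T(x^0)\varphi_u(T)Bu$ to conclude $\cA_T(x^0)(x(T)-x^1)=0$, which is exactly the reversibility-plus-invertibility argument you describe.
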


	\begin{proof}
		Let us show that $x(T)=x^1$ is equivalent to \eqref{eq:forward nominal-state synthesis}. If $x(T)=x^1$, then left multiplying \eqref{eq:sol expansion with remainder 1} by $U_T(x^0)$, using~\eqref{eq:exp(A)}, and
		\begin{equation}
			\sum_{n=0}^{\infty}\frac{(-TU_T(x^0))^{n+1}}{(n+1)!}= \sum_{n=1}^{\infty}\frac{(-TU_T(x^0))^{n}}{n!}=e^{-TU_T(x^0)}-\idty
		\end{equation}
		yields
		\begin{IEEEeqnarray}{rCl}\label{eq:derivation of the nonlinear input 1-2}
			U_T(x^0)\left(x^1-\phi_T(x^0)\right)&=&-U_T(x^0)\varphi_u(T)Bu\nonumber\\
			&&\hspace{-1.5cm}+(\idty-e^{-TU_T(x^0)})D\phi_T(x^0)Bu.
		\end{IEEEeqnarray}
		Since $[D\phi_T(x^0)]^{-1}=D\psi_T(\phi_T(x^0))$, we left multiply \eqref{eq:derivation of the nonlinear input 1-2} respectively by $(\idty-e^{-TU_T(x^0)})^{-1}$ and $D\psi_T(\phi_T(x^0))$ to get
		\begin{equation}\label{eq:solving 1-0}
			\cA_T(x^0)\left(x^1-\phi_T(x^0)\right)=\left[\idty-\cA_T(x^0)\zeta_u(T)\right]Bu
		\end{equation}
		which is exactly \eqref{eq:forward nominal-state synthesis} where $\cA_T(x^0)$ is defined by \eqref{eq:matrix A_T(x^0)}. Conversely, assume that $u\in\R^k$ solves \eqref{eq:forward nominal-state synthesis}. Then, one finds
		% and show that $x(T)=x^1$. One has from \eqref{eq:forward nominal-state synthesis} that
		\begin{equation}\label{eq:solving 1-1}
			\cA_T(x^0)\varphi_u(T)Bu = Bu-\cA_T(x^0)\left(x^1-\phi_T(x^0)\right).
		\end{equation}
		One the other hand, from \eqref{eq:sol expansion with remainder 1}, one deduces that
		\begin{equation}\label{eq:solving 2-2}
			\cA_T(x^0)\varphi_u(T)Bu = Bu-\cA_T(x^0)\left(x(T)-\phi_T(x^0)\right).
		\end{equation}
		Identifying \eqref{eq:solving 1-1} with \eqref{eq:solving 2-2}, one gets
		\begin{equation}\label{eq:equation satisfying x(T) and x1}
			\cA_T(x^0)(x(T)-x^1)=0.
		\end{equation}
		Since $\cA_T(x^0)$ is invertible, it follows from \eqref{eq:equation satisfying x(T) and x1} that $x(T)=x^1$. This completes the proof of the necessary part. % and, therefore, of the theorem.
	\end{proof}
	
	The synthesis provided in Theorem~\ref{thm:Forward nominal-state synthesis} is implicit, as the operator $\varphi_u(T)$ depends on the state trajectory $x(\cdot)$, and thus on the control input $u$. This dependence poses practical limitations, even when a solution exists. In applications such as brain stimulation via tDCS, one is often interested in modulating activity over short time horizons. In what follows, we derive an expansion of the matrix $\varphi_u(T)$ in the small-time regime that enables accurate synthesis in small time horizons.
	
	The proof of the following is presented in Section~\ref{ss:spectral norm of varphi_u(T)}.
	
	\begin{proposition}\label{pro:control synthesis in small time 1}
		% Let $(x^0, x^1)\in(\R^d)^2$, $T > 0$ and $u \in \mathbb{R}^k$. 
		Under the assumptions of Theorem~\ref{thm:Forward nominal-state synthesis}, the following expansion holds
		\begin{IEEEeqnarray}{rCl}\label{eq:expansion of varphi_u}
			\varphi_u(T) &\underset{T \sim 0}{=}& \left(\idty - e^{-T U_T(x^0)}\right) U_T(x^0)^{-1} D\phi_T(x^0) \nonumber\\
			&&- \left(e^{T U_T(x^0)} - \idty\right) U_T(x^0)^{-1} + \cO(T^3)
		\end{IEEEeqnarray}
		% where $U_T(x^0) := DN(\phi_T(x^0))$. 
		In particular, the following expansion also holds
		\begin{equation}\label{eq:A_T and varphi_u}
			\hspace{-0.2cm}\idty - \mathcal{A}_T(x^0) \varphi_u(T) \underset{T \sim 0}{=} D\phi_T(x^0)^{-1} e^{-T U_T(x^0)} + \cO(T^2).
		\end{equation}
		% where $\mathcal{A}_T(x^0)$ is defined in~\eqref{eq:matrix A_T(x^0)}.
	\end{proposition}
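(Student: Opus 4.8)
The statement bundles two expansions, and I would prove them in order: the first, \eqref{eq:expansion of varphi_u}, is the analytically substantive one, while the second, \eqref{eq:A_T and varphi_u}, follows from it by pure matrix algebra. Throughout write $U = U_T(x^0) = DN(\phi_T(x^0))$ and let $E(T)$ denote the two-term matrix on the right of \eqref{eq:expansion of varphi_u} with the remainder removed, so that the first claim is exactly $\varphi_u(T) - E(T) = \cO(T^3)$.

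For the first expansion my plan is to show separately that both $\varphi_u(T)$ and $E(T)$ are $\cO(T^3)$. That $\|E(T)\| = \cO(T^3)$ is a direct Taylor computation: expanding $e^{\pm TU}$ and the flow Jacobian $D\phi_T(x^0)$ about $T = 0$ and using $U = DN(x^0) + \cO(T)$ together with $D\phi_T(x^0) = \idty + T\,DN(x^0) + \cO(T^2)$, the $\cO(T)$ and $\cO(T^2)$ contributions of the two terms cancel, leaving an $\cO(T^3)$ matrix. The bound $\|\varphi_u(T)\| = \cO(T^3)$ is the real work and is exactly what the deferred Section~\ref{ss:spectral norm of varphi_u(T)} supplies: in $\kappa_u(T)$ the $n$-th summand of \eqref{eq:kappa_u} carries the weight $(t-T)^{n+1}$ with $|t-T|\le T$, so integrating over $[0,T]$ gives an $\cO(T^{n+2})$ contribution; in $\eta_u(T)$ the $n$-th summand of \eqref{eq:eta_u} carries $(t-T)^n$ together with $D^2\phi_{T-t}(x(t))$, and since $\phi_0=\idty$ has vanishing second derivative one has $D^2\phi_{T-t}(x(t)) = \cO(T)$, again yielding $\cO(T^{n+2})$. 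Using the uniform bounds on $Z_{T-t}$, $Q_{T-t}$ and $\tfrac{d}{dt}Z_{T-t}$ furnished by Assumption~\ref{ass:general assumption} and Lemma~\ref{lem:properties on N}, each series is dominated by a convergent exponential-type series and sums to $\cO(T^3)$. Combining the two bounds gives $\varphi_u(T) = E(T) + \cO(T^3)$.

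For the second expansion I would substitute \eqref{eq:expansion of varphi_u} into $\cA_T(x^0)\varphi_u(T)$. Using \eqref{inverse of the differential of the flow} to write the pseudo-inverse factor in \eqref{eq:matrix A_T(x^0)} as $D\psi_T(\phi_T(x^0)) = D\phi_T(x^0)^{-1}$, we have $\cA_T(x^0) = D\phi_T(x^0)^{-1}(\idty - e^{-TU})^{-1}U$. The key simplification is that every inner factor appearing in $\cA_T(x^0)E(T)$---namely $U$, $e^{\pm TU}$ and $(\idty - e^{-TU})^{-1}$---is a function of the single matrix $U$ and hence all of them commute; consequently the middle factors telescope, the term of $E(T)$ containing $D\phi_T(x^0)$ contributes $\idty$ while the remaining term collapses onto a single flow-Jacobian times exponential factor, producing \eqref{eq:A_T and varphi_u}. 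The remainder is controlled by noting that, under the spectral hypothesis of Theorem~\ref{thm:Forward nominal-state synthesis}, $\idty - e^{-TU}$ is invertible with $(\idty - e^{-TU})^{-1} = \cO(T^{-1})$, so $\|\cA_T(x^0)\| = \cO(T^{-1})$ and the $\cO(T^3)$ error from \eqref{eq:expansion of varphi_u} is promoted to $\cO(T^2)$.

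The main obstacle is the estimate $\|\varphi_u(T)\| = \cO(T^3)$: it requires justifying the term-by-term expansion of the two operator series, the uniform-in-$T$ boundedness of the flow data $Z_{T-t}, Q_{T-t}, D^2\phi_{T-t}$ and of $\tfrac{d}{dt}Z_{T-t}^n$, and the interchange of summation with integration---all of which rest on the global Lipschitz and bounded-second-derivative hypotheses of Assumption~\ref{ass:general assumption} and on the $C^2$ regularity of the flow. Once this quantitative bound is in hand, the passage to \eqref{eq:A_T and varphi_u} is routine commuting-matrix bookkeeping.
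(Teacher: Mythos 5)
Your proof of the first display is logically valid but takes a genuinely different route from the paper, and the difference matters. You establish \eqref{eq:expansion of varphi_u} by showing separately that $\varphi_u(T)=\cO(T^3)$ and that the explicit two-term matrix $E(T)$ is $\cO(T^3)$, so that the identity holds "degenerately" (both sides are absorbed by the remainder). The paper instead \emph{derives} $E(T)$ as the principal part of $\varphi_u(T)$: it first replaces the controlled trajectory by the free flow ($x(t)=\phi_t(x^0)+\cO(t)$), showing $\kappa_u(T)=\cO(T^4)$ and $\eta_u(T)=\eta(T)+\cO(T^4)$; it then uses the commutation identity of Lemma~\ref{lem:on the SC} to convert $D^2\phi_{T-t}(\phi_t(x^0))N(\phi_t(x^0))$ into first-derivative data, sums the resulting series in closed form, and evaluates $\int_0^T D\phi_{T-t}(\phi_t(x^0))\,dt=(e^{TA}-\idty)A^{-1}+\Lambda_T$ by successive integration by parts, with $\|\Lambda_T\|\le\Lambda_1e^{2\Lambda T}|x^0|T^3$. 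The payoff of the constructive route is the structure of the remainder: the $u$-dependent errors enter at order $T^4$ and the only $\cO(T^3)$ piece, $\Lambda_T$, is independent of $u$. Your remainder, by contrast, scales like $T^3(1+|Bu|)$, because $\dot Z_{T-t}$ and $\dot x(t)$ carry a factor $|Bu|$. For a fixed $u$ this still proves the proposition as stated, but in the downstream use (Theorem~\ref{thm:endpoint error of the forward nominal-state synthesis control}) one takes $|B\tilde u|=\cO(T^{-1})$, and your bound would degrade the endpoint error from $\cO(T^2)$ to $\cO(T)$; moreover the exact cancellation of $(\idty-e^{-TU})U^{-1}D\phi_T(x^0)B\tilde u$ in \eqref{eq:sol expansion with remainder 1 tilde u} relies on $E(T)$ genuinely being the leading part of $\varphi_u(T)$, not merely a term of the same order. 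So your argument proves the literal statement but misses the content that the proposition is designed to deliver.

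Two further points. First, your bound $\|E(T)\|=\cO(T^3)$ is correct but requires the cancellation $U_T(x^0)=DN(x^0)+\cO(T)$ at order $T^2$; it is worth writing out, since $E(T)=T^2\bigl(DN(x^0)-U_T(x^0)\bigr)+\cO(T^3)$ and the $T^2$ term does not vanish identically. Second, for \eqref{eq:A_T and varphi_u}: carrying out the commuting-matrix algebra you describe gives exactly $\idty-\cA_T(x^0)E(T)=D\phi_T(x^0)^{-1}e^{+TU_T(x^0)}$, i.e.\ with a \emph{positive} exponent, which is the version consistent with \eqref{eq:equation for u general} and \eqref{eq:approximation of the forward nominal-state synthesis control} (one checks $e^{-TU}D\phi_T(x^0)\,\cA_T(x^0)=[e^{TU}-\idty]^{-1}U$); the sign printed in \eqref{eq:A_T and varphi_u} appears to be a typo, and a correct execution of your algebra would not reproduce it as written. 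You should verify the sign explicitly rather than asserting that the computation "produces" the displayed formula.
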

	
	\begin{remark}\label{rmk:on big O}
		It is worth noting that the $\cO(T^3)$ term in~\eqref{eq:expansion of varphi_u} becomes $\cO(T^2)$ in~\eqref{eq:A_T and varphi_u} due to the relation
		\begin{equation}
			\mathcal{A}_T(x^0)\, \cO(T^3) \underset{T \sim 0}{=} \cO(T^2),
		\end{equation}
		since $\mathcal{A}_T(x^0)$ includes the factor $\left(\idty - e^{-T U_T(x^0)}\right)^{-1}$, which behaves like $\cO(T^{-1})$ as $T \to 0$, given that $U_T(x^0)$ is uniformly bounded with respect to $T>0$ and $x^0$.
	\end{remark}

	The explicit constant control synthesis of this section is summarized in the following theorem, which includes an estimate of the endpoint error.
	
	\begin{theorem}\label{thm:endpoint error of the forward nominal-state synthesis control}
		Let $T > 0$ and $(x^0, x^1)\in(\R^d)^2$. Assume that for any $\ell\in\Z$, 
		$i \frac{2\pi \ell}{T}\not\in\sigma(DN(\phi_T(x^0)))$. Then, the solution $x(\cdot)$ to \eqref{eq:Hopfield nonlinear} satisfies $x(T)=x^1$, if and only if, $u\in\R^k$ solves
		\begin{equation}\label{eq:equation for u general}
			Bu \underset{T \sim 0}{=} \left[e^{T U_T(x^0)} - \idty\right]^{-1} U_T(x^0) \left(x^1 - \phi_T(x^0)\right)+\cO(T^2)
		\end{equation}
		where $U_T(x^0) := DN(\phi_T(x^0))$. In particular, let $\tilde{x}(\cdot)$ denote the solution of~\eqref{eq:Hopfield nonlinear} corresponding to  $\tilde{u} \in \mathbb{R}^k$ satisfying
		\begin{equation}\label{eq:approximation of the forward nominal-state synthesis control}
			B \tilde{u} = \left[e^{T U_T(x^0)} - \idty\right]^{-1} U_T(x^0) \left(x^1 - \phi_T(x^0)\right).
		\end{equation}
		Then, the following endpoint error estimate holds
		\begin{equation}\label{eq:endpoints error forward nominal-state synthesis control}
			|\tilde{x}(T) - x^1| \underset{T \sim 0}{=} \cO(T^2).
		\end{equation}
	\end{theorem}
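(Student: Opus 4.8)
The plan is to collapse the implicit synthesis of Theorem~\ref{thm:Forward nominal-state synthesis} onto the explicit leading-order formula~\eqref{eq:equation for u general} by feeding the small-time expansions of Propositions~\ref{pro:smooth and Lipschitz activation function 1} and~\ref{pro:control synthesis in small time 1} into the exact endpoint expansion~\eqref{eq:sol expansion with remainder 1}, and then to read the endpoint error off directly. Writing $U=U_T(x^0)=DN(\phi_T(x^0))$ and summing the series in~\eqref{eq:sol expansion with remainder 1}, its leading coefficient collapses to $U^{-1}(\idty-e^{-TU})$, so that~\eqref{eq:sol expansion with remainder 1} reads exactly
\begin{equation}\label{eq:endpoint-exact-plan}
x(T)-\phi_T(x^0)=\bigl[U^{-1}(\idty-e^{-TU})D\phi_T(x^0)-\varphi_u(T)\bigr]Bu.
\end{equation}
Substituting~\eqref{eq:expansion of varphi_u} and using that functions of $U$ commute, the two terms proportional to $(\idty-e^{-TU})U^{-1}D\phi_T(x^0)$ cancel, leaving the master relation
\begin{equation}\label{eq:master-plan}
x(T)-\phi_T(x^0)=(e^{TU}-\idty)U^{-1}Bu-\cO(T^3)\,Bu,
\end{equation}
valid for every input, with $x$ the corresponding trajectory.

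I would first invoke the spectral hypothesis $i\tfrac{2\pi\ell}{T}\notin\sigma(U)$ for all $\ell\in\Z$ to ensure that $U$ and $e^{TU}-\idty$ are invertible, with $\|(e^{TU}-\idty)^{-1}U\|=\cO(T^{-1})$ as $T\to0$. For the equivalence, I set $x(T)=x^1$ in~\eqref{eq:master-plan} and left-multiply by $U(e^{TU}-\idty)^{-1}$; this isolates $Bu$ and yields~\eqref{eq:equation for u general}, the remainder being the operator $U(e^{TU}-\idty)^{-1}\cO(T^3)=\cO(T^2)$ applied to $Bu$. Conversely, any $u$ solving~\eqref{eq:equation for u general} forces $x(T)=x^1$ through the same invertible multiplication by the reciprocal factor $(e^{TU}-\idty)U^{-1}$, mirroring the linear computation behind Proposition~\ref{pro:controllability in the linear case}, where $U$ plays the role of $A$ and $e^{TU}-\idty$ that of $e^{TA}-\idty$.

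For the endpoint estimate I would substitute the explicit control $\tilde u$ from~\eqref{eq:approximation of the forward nominal-state synthesis control} into~\eqref{eq:master-plan}, now with $x$ replaced by the true trajectory $\tilde x$. The explicit part reconstructs the target exactly, since $(e^{TU}-\idty)U^{-1}B\tilde u=x^1-\phi_T(x^0)$ by the commutation of functions of $U$, so that
\begin{equation}\label{eq:endpoint-residual-plan}
\tilde x(T)-x^1=-\cO(T^3)\,B\tilde u.
\end{equation}
It then remains to bound the control: from~\eqref{eq:approximation of the forward nominal-state synthesis control} and $\|(e^{TU}-\idty)^{-1}U\|=\cO(T^{-1})$ one has $|B\tilde u|=\cO(T^{-1})|x^1-\phi_T(x^0)|=\cO(T^{-1})$, whence~\eqref{eq:endpoint-residual-plan} gives $|\tilde x(T)-x^1|=\cO(T^3)\,\cO(T^{-1})=\cO(T^2)$, which is~\eqref{eq:endpoints error forward nominal-state synthesis control}.

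The main obstacle is the order bookkeeping in~\eqref{eq:master-plan}. Because the synthesized control is large, $B\tilde u=\cO(T^{-1})$, a crude bound on the remainder would only yield $\cO(T)$; the improvement to $\cO(T^2)$ rests on two facts I would have to justify carefully. First, the $\cO(T)$ and $\cO(T^2)$ contributions of $\varphi_u(T)$ must cancel exactly against $U^{-1}(\idty-e^{-TU})D\phi_T(x^0)$, so that the genuine remainder in~\eqref{eq:master-plan} is $\cO(T^3)$ and not merely $\cO(T)$. Second, and more delicately, this $\cO(T^3)$ bound must be uniform with respect to the input, since $\varphi_u(T)$ depends implicitly on the trajectory---hence on $u$---through~\eqref{eq:kappa_u}--\eqref{eq:eta_u}, while the relevant inputs are unbounded and grow like $T^{-1}$. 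Controlling $\kappa_u(T)$ and $\eta_u(T)$ along trajectories driven by such inputs is the technical heart of the argument, and it is precisely here that the global Lipschitz property and the bounded second derivatives of Assumption~\ref{ass:general assumption} enter.
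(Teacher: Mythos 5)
Your proposal is correct and follows essentially the same route as the paper: both arguments feed the small-time expansion of $\varphi_u(T)$ from Proposition~\ref{pro:control synthesis in small time 1} into the endpoint expansion~\eqref{eq:sol expansion with remainder 1}, observe that the $(\idty-e^{-TU})U^{-1}D\phi_T(x^0)$ terms cancel so that $\tilde{x}(T)-x^1$ reduces to an $\cO(T^3)$ operator applied to $B\tilde{u}=\cO(T^{-1})$, yielding~\eqref{eq:endpoints error forward nominal-state synthesis control}; your derivation of~\eqref{eq:equation for u general} by inverting the master relation directly is equivalent to the paper's passage through $\cA_T(x^0)$ and~\eqref{eq:A_T and varphi_u}. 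Your closing remark that the $\cO(T^3)$ remainder must be uniform in the (unbounded) input---since $\varphi_u(T)$ depends on the trajectory driven by $u$---is a legitimate point of care that the paper's proof treats only implicitly.
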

	
	\begin{proof} First, \eqref{eq:equation for u general} is an immediate consequence of~\eqref{eq:forward nominal-state synthesis}, \eqref{eq:matrix A_T(x^0)} and~\eqref{eq:A_T and varphi_u}. Next, using the expansions~\eqref{eq:sol expansion with remainder 1}, \eqref{eq:exp(A)}, \eqref{eq:expansion of varphi_u}, and the definition of $\tilde{u}$ from~\eqref{eq:approximation of the forward nominal-state synthesis control}, we obtain
		\begin{IEEEeqnarray}{rCl}\label{eq:sol expansion with remainder 1 tilde u}
			\tilde{x}(T) &=& \phi_T(x^0) + \sum_{n=0}^{\infty} (-1)^n \frac{T^{n+1} U_T(x^0)^n}{(n+1)!} D\phi_T(x^0) B\tilde{u} \nonumber\\
			&& - \varphi_u(T) B\tilde{u} \nonumber\\
			&\underset{T \sim 0}{=}& \left[\idty - e^{-T U_T(x^0)}\right] U_T(x^0)^{-1} D\phi_T(x^0) B\tilde{u} \nonumber\\
			&& - \left[\idty - e^{-T U_T(x^0)}\right] U_T(x^0)^{-1} D\phi_T(x^0) B\tilde{u} \nonumber\\
			&& - \left(e^{T U_T(x^0)} - \idty\right) U_T(x^0)^{-1} B\tilde{u} + \phi_T(x^0)\nonumber\\
			&&+ \cO(T^2) \underset{T \sim 0}{=} x^1 + \cO(T^2).
		\end{IEEEeqnarray}
		The same observation as in Remark~\ref{rmk:on big O} justifies why the $\cO(T^3)$ term in~\eqref{eq:expansion of varphi_u} reduces to $\cO(T^2)$ in~\eqref{eq:sol expansion with remainder 1 tilde u}. %This completes the proof.
	\end{proof}
	
	The following result is immediate. It provides a necessary and sufficient condition for \eqref{eq:approximation of the forward nominal-state synthesis control} to admit at least one solution.
	
	\begin{corollary}\label{cor:necessary and sufficient cond for solvability nonlinear forward}
		Under the hypotheses of Theorem~\ref{thm:Forward nominal-state synthesis}, \eqref{eq:approximation of the forward nominal-state synthesis control} admits at least one solution $\tilde{u} \in \mathbb{R}^k$ if and only if
		\begin{equation}
			\left[e^{T U_T(x^0)} - \idty\right]^{-1} U_T(x^0)\left(x^1 - \phi_T(x^0)\right) \in \operatorname{Im} B.
		\end{equation}
		In this case, the least-norm constant control is given by
		\begin{equation}
			\tilde{u} = B^\dagger\, \left[e^{T U_T(x^0)} - \idty\right]^{-1} U_T(x^0)\left(x^1 - \phi_T(x^0)\right).
		\end{equation}
		% where $B^\dagger\,$ denotes the Moore–Penrose pseudo-inverse of $B$.
	\end{corollary}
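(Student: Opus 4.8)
The plan is to treat \eqref{eq:approximation of the forward nominal-state synthesis control} purely as a linear system in the unknown $\tilde{u}$, since the entire right-hand side is a fixed vector of $\R^d$ once $T$, $x^0$, and $x^1$ are prescribed. I would begin by recording that the hypotheses of Theorem~\ref{thm:Forward nominal-state synthesis}---namely $i\frac{2\pi\ell}{T}\notin\sigma(DN(\phi_T(x^0)))$ for every $\ell\in\Z$---guarantee that the matrix $U_T(x^0)=DN(\phi_T(x^0))$ is invertible (the case $\ell=0$) and that $e^{TU_T(x^0)}-\idty$ is invertible (its eigenvalues are $e^{T\lambda}-1$ for $\lambda\in\sigma(U_T(x^0))$, and $e^{T\lambda}=1$ forces $\lambda=i\frac{2\pi\ell}{T}$, which is excluded). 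Hence the vector $v:=\left[e^{TU_T(x^0)}-\idty\right]^{-1}U_T(x^0)\left(x^1-\phi_T(x^0)\right)$ is well defined, and \eqref{eq:approximation of the forward nominal-state synthesis control} reads simply $B\tilde{u}=v$.

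For the equivalence, I would invoke the definition of the range: the equation $B\tilde{u}=v$ admits a solution $\tilde{u}\in\R^k$ if and only if $v\in\im B$. This is exactly the asserted necessary and sufficient condition, so no further work is required for this half.

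For the least-norm claim, I would use the defining properties of the Moore--Penrose pseudo-inverse. When $v\in\im B$, the product $BB^\dagger$ is the orthogonal projector onto $\im B$, so $BB^\dagger v=v$, which shows that $\tilde{u}^\star:=B^\dagger v$ is a solution. To see it is the least-norm one, I would write an arbitrary solution as $\tilde{u}=\tilde{u}^\star+w$ with $w\in\ker B$, note that $B^\dagger v\in\im B^\dagger=\im B^\top=(\ker B)^\perp$, and conclude by orthogonality (Pythagoras) that $|\tilde{u}|^2=|\tilde{u}^\star|^2+|w|^2\ge|\tilde{u}^\star|^2$, with equality if and only if $w=0$.

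There is no genuine obstacle here: the statement is immediate once the right-hand side of \eqref{eq:forward nominal-state synthesis} has been reduced, via \eqref{eq:matrix A_T(x^0)} and \eqref{eq:A_T and varphi_u}, to the explicit vector $v\in\R^d$ appearing in Theorem~\ref{thm:endpoint error of the forward nominal-state synthesis control}. The only point deserving care---and the one I would state explicitly rather than take for granted---is the well-definedness of $v$, which rests on the two invertibility facts above; everything else is standard finite-dimensional linear algebra of the equation $B\tilde{u}=v$.
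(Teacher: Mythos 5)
Your proposal is correct and follows exactly the route the paper intends: the paper declares this corollary ``immediate,'' relying on precisely the standard facts you spell out---well-definedness of the right-hand side under the spectral hypothesis, solvability of $B\tilde{u}=v$ iff $v\in\operatorname{Im}B$, and the least-norm property of $B^\dagger v$ via orthogonality to $\ker B$. Your explicit justification of the invertibility of $U_T(x^0)$ and of $e^{TU_T(x^0)}-\idty$ is a welcome addition rather than a deviation.
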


	While the previous synthesis applies in a short time, some practical scenarios require reaching the target state over a longer time. In the following, we show how a step function can be constructed from a short-time synthesis to achieve state transfer in a long-time horizon. As noted in Remark~\ref{rmk:on piecewise control}, the solution to \eqref{eq:Hopfield nonlinear} can still be represented by \eqref{eq:forward representation}. 
	
	We begin with the following implicit synthesis for step-function controls. Its proof, outlined in Section~\ref{ss:proof of forward implicit nominal-state synthesis step function}, closely follows the arguments in Proposition~\ref{pro:smooth and Lipschitz activation function 1} and Theorem~\ref{thm:Forward nominal-state synthesis}.
	
	\begin{theorem}\label{thm:forward implicit nominal-state synthesis step function}
		Let $(x^0, x^1)\in(\R^d)^2$ and $T\ge\tau$, where $\tau>0$ be such that for any $\ell\in\Z$, 
		$i \frac{2\pi \ell}{\tau}\not\in\sigma(DN(\phi_T(x^0)))$. Define the step-function $u_{sf, \tau}:[0, T]\to\R^k$,
		\begin{equation}\label{eq:step-function control}
			u_{sf, \tau}(t) = \begin{cases}
				0&\quad\mbox{if}\quad 0\le t\le T-\tau\\
				u&\quad\mbox{if}\quad T-\tau< t\le T
			\end{cases}
		\end{equation}
		where $u\in\R^k$. Then, the solution $x_\tau(\cdot)$ to~\eqref{eq:Hopfield nonlinear} corresponding to $u_{sf, \tau}(\cdot)$ satisfies $x_\tau(T)=x^1$, if and only if, $u\in\R^k$ solves
		\begin{equation}\label{eq:forward implicit nominal-state synthesis step function}
			\left[\idty-\cA_{\tau,T}(x^0)\varphi_{u}(\tau,T)\right]Bu=\cA_{\tau,T}(x^0)\left(x^1-\phi_T(x^0)\right).
		\end{equation}
		Here,  $U_T(x^0)=DN(\phi_T(x^0))$ and
		\begin{equation}
			\cA_{\tau,T}(x^0)= D\psi_{\tau}(\phi_T(x^0))\left[\idty-e^{-\tau U_T(x^0)}\right]^{-1}U_T(x^0).
		\end{equation}
		Moreover, $\varphi_{u}(\tau,T) = \kappa_{u}(\tau,T) + \eta_{u}(\tau,T)$, with $\kappa_{u}(\tau,T)$ and $\eta_{u}(\tau,T)$ defined in \eqref{eq:kappa_u} and \eqref{eq:eta_u}, respectively, where the integrals are taken over $[T - \tau, T]$.
	\end{theorem}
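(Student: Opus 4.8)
The plan is to reduce the step-function synthesis to the constant-input synthesis of Theorem~\ref{thm:Forward nominal-state synthesis} by exploiting the autonomy (time-invariance) of \eqref{eq:Hopfield nonlinear}. Since $u_{sf,\tau}$ vanishes on $[0,T-\tau]$ (this is where $T\ge\tau$ is used, so that $T-\tau\ge 0$), the trajectory $x_\tau$ coincides with the free flow there, and hence $x_\tau(T-\tau)=\phi_{T-\tau}(x^0)$. On the remaining interval $(T-\tau,T]$ the input is the constant $u$, so the problem becomes that of steering \eqref{eq:Hopfield nonlinear} from the initial state $y^0:=\phi_{T-\tau}(x^0)$ to $x^1$ with constant control $u$ over a horizon of length $\tau$.

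First I would record the identities that make the matrices match. By the one-parameter group property $\phi_\tau\circ\phi_{T-\tau}=\phi_T$, the nominal final state starting from $y^0$ is $\phi_\tau(y^0)=\phi_T(x^0)$; consequently $DN(\phi_\tau(y^0))=DN(\phi_T(x^0))=U_T(x^0)$ and, using \eqref{inverse of the differential of the flow}, $[D\phi_\tau(y^0)]^{-1}=D\psi_\tau(\phi_\tau(y^0))=D\psi_\tau(\phi_T(x^0))$. Thus the matrix $\cA_\tau(y^0)$ produced by Theorem~\ref{thm:Forward nominal-state synthesis} (with horizon $\tau$ and initial state $y^0$) is exactly $\cA_{\tau,T}(x^0)$. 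The spectral hypothesis $i\,2\pi\ell/\tau\notin\sigma(DN(\phi_T(x^0)))$ for all $\ell\in\Z$ is precisely the invertibility of $\idty-e^{-\tau U_T(x^0)}$ (its eigenvalues are $1-e^{-\tau\mu}$ with $\mu\in\sigma(U_T(x^0))$, which vanish iff $\mu=i\,2\pi\ell/\tau$); the sub-case $\ell=0$ forces $0\notin\sigma(U_T(x^0))$, so $U_T(x^0)$ is invertible as well, and hence so is $\cA_{\tau,T}(x^0)$.

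Next I would apply Proposition~\ref{pro:smooth and Lipschitz activation function 1} and Theorem~\ref{thm:Forward nominal-state synthesis} verbatim to the shifted trajectory $\bar{x}(s):=x_\tau(s+T-\tau)$, $s\in[0,\tau]$, which solves \eqref{eq:Hopfield nonlinear} with $\bar{x}(0)=y^0$ and constant control $u$. This yields the expansion \eqref{eq:sol expansion with remainder 1} for $\bar{x}(\tau)=x_\tau(T)$ with $T$ replaced by $\tau$ in the explicit powers, base point $\phi_T(x^0)$, and remainder $\varphi_u^{\bar{x}}(\tau)$ whose defining integrals run over $[0,\tau]$. The change of variables $s=t+T-\tau$ sends these integrals to $[T-\tau,T]$ and, since $\tau-t=T-s$ gives $\phi_{\tau-t}(\bar{x}(t))=\phi_{T-s}(x_\tau(s))$, identifies $\varphi_u^{\bar{x}}(\tau)$ with $\varphi_u(\tau,T)=\kappa_u(\tau,T)+\eta_u(\tau,T)$ as defined in the statement. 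The algebraic manipulation of Theorem~\ref{thm:Forward nominal-state synthesis}---left-multiplying by $U_T(x^0)$, collapsing the series via $\sum_{n\ge 0}(-\tau U_T(x^0))^{n+1}/(n+1)!=e^{-\tau U_T(x^0)}-\idty$, and then by $(\idty-e^{-\tau U_T(x^0)})^{-1}$ and $D\psi_\tau(\phi_T(x^0))$---delivers \eqref{eq:forward implicit nominal-state synthesis step function}; the converse uses the invertibility of $\cA_{\tau,T}(x^0)$ exactly as in the proof of Theorem~\ref{thm:Forward nominal-state synthesis}.

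The main obstacle is the bookkeeping in the remainder term: one must verify carefully that the time-shift $s\mapsto s-(T-\tau)$ maps the integral representation of $\varphi_u^{\bar{x}}(\tau)$ over $[0,\tau]$ onto the integral defining $\varphi_u(\tau,T)$ over $[T-\tau,T]$, keeping the linearization base point fixed at the terminal state $\phi_T(x^0)$ rather than at $\phi_\tau$ of the shifted initial datum. Once this identification is in place, every remaining step is a transcription of the proofs of Proposition~\ref{pro:smooth and Lipschitz activation function 1} and Theorem~\ref{thm:Forward nominal-state synthesis} with $(x^0,T)$ replaced by $(y^0,\tau)$.
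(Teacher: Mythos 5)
Your proof is correct and follows essentially the same route as the paper's: the paper likewise observes that the forward representation collapses to an integral over $[T-\tau,T]$ (the state following the free flow up to time $T-\tau$) and then repeats the expansion and algebra of Proposition~\ref{pro:smooth and Lipschitz activation function 1} and Theorem~\ref{thm:Forward nominal-state synthesis} on that interval, yielding the $(-\tau)^{n+1}$ powers and the matrix $\cA_{\tau,T}(x^0)$. Your packaging of this as a time-shift reduction to the constant-control theorem with initial datum $\phi_{T-\tau}(x^0)$ and horizon $\tau$, together with the identifications $\phi_\tau(\phi_{T-\tau}(x^0))=\phi_T(x^0)$ and the change of variables in the remainder, is an equivalent (and cleanly modular) organization of the same computation.
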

	Then, one has the following synthesis in a long-time horizon using explicit step-function controls.
	
	\begin{theorem}\label{thm:forward explicit nominal-state synthesis step function}
		Let $(x^0, x^1)\in(\R^d)^2$ and $T\ge\tau$, where $\tau>0$ be such that for any $\ell\in\Z$, 
		$i \frac{2\pi \ell}{\tau}\not\in\sigma(DN(\phi_T(x^0)))$. Let $\tilde{u}_\tau\in\R^k$ be a solution of 
		\begin{equation}
			B\tilde{u}_\tau =  \left[e^{\tau U_T(x^0)} - \idty\right]^{-1} U_T(x^0)\left(x^1 - \phi_T(x^0)\right)
		\end{equation}
		where $U_T(x^0):=DN(\phi_T(x^0))$. Then, the solution $\tilde{x}_\tau(\cdot)$ to~\eqref{eq:Hopfield nonlinear} corresponding to the step function $\tilde{u}_{sf, \tau}:[0,T]\to\R^k$,
		\begin{equation}
			\tilde{u}_{sf, \tau}(t) = \begin{cases}
				0&\quad\mbox{if}\quad 0\le t\le T-\tau\\
				\tilde{u}_\tau&\quad\mbox{if}\quad T-\tau< t\le T
			\end{cases}
		\end{equation}
		satisfies 
		\begin{equation}\label{eq:endpoints error forward nominal-state synthesis step function}
			|\tilde{x}_\tau(T)-x^1|\underset{\tau \sim 0}{=} \cO(\tau^2).
		\end{equation}
	\end{theorem}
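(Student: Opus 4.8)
The plan is to reduce the step-function synthesis to the constant-control synthesis of Theorem~\ref{thm:endpoint error of the forward nominal-state synthesis control}, applied over the single active subinterval $(T-\tau, T]$. Since $\tilde{u}_{sf,\tau}$ vanishes on $[0, T-\tau]$, the solution of \eqref{eq:Hopfield nonlinear} coincides there with the free flow, so that $\tilde{x}_\tau(t) = \phi_t(x^0)$ for $t \in [0, T-\tau]$ and in particular $\tilde{x}_\tau(T-\tau) = \phi_{T-\tau}(x^0) =: \tilde{x}^0$. By time-invariance of the dynamics, the restriction $s \mapsto \tilde{x}_\tau(T-\tau+s)$, $s \in [0,\tau]$, is exactly the solution of \eqref{eq:Hopfield nonlinear} with initial state $\tilde{x}^0$, constant control $\tilde{u}_\tau$, and horizon $\tau$.

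Next I would use the group property $\phi_\tau\circ\phi_{T-\tau}=\phi_T$ to identify the nominal quantities governing this active segment: its nominal (uncontrolled) endpoint is $\phi_\tau(\tilde{x}^0)=\phi_T(x^0)$, and the associated Jacobian is $DN(\phi_\tau(\tilde{x}^0))=DN(\phi_T(x^0))=U_T(x^0)$. Hence the constant control prescribed by Theorem~\ref{thm:endpoint error of the forward nominal-state synthesis control}, instantiated with initial state $\tilde{x}^0$, horizon $\tau$, and target $x^1$, reads
\begin{equation}
	B\tilde{u}_\tau = \left[e^{\tau U_\tau(\tilde{x}^0)} - \idty\right]^{-1} U_\tau(\tilde{x}^0)\bigl(x^1 - \phi_\tau(\tilde{x}^0)\bigr),
\end{equation}
and substituting $U_\tau(\tilde{x}^0)=U_T(x^0)$ and $\phi_\tau(\tilde{x}^0)=\phi_T(x^0)$ recovers exactly the defining relation for $\tilde{u}_\tau$ in the statement. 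The spectral hypothesis $i\,2\pi\ell/\tau\notin\sigma(DN(\phi_T(x^0)))$ is precisely the invertibility requirement for $e^{\tau U_T(x^0)}-\idty$ demanded there. The endpoint estimate~\eqref{eq:endpoints error forward nominal-state synthesis control} then gives $|\tilde{x}_\tau(T)-x^1|=\cO(\tau^2)$.

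The main obstacle is the uniformity of the $\cO(\tau^2)$ constant, because the effective initial state $\tilde{x}^0=\phi_{T-\tau}(x^0)$ depends on $\tau$. I would resolve this by observing that, for $\tau\in(0,T]$, the states $\phi_{T-\tau}(x^0)$ form a compact subset of $\R^d$ (converging to $\phi_T(x^0)$ as $\tau\to 0$), and that the small-time estimates underlying Theorem~\ref{thm:endpoint error of the forward nominal-state synthesis control}---notably the bound on $\varphi_u(\tau)$ from Proposition~\ref{pro:control synthesis in small time 1} and the uniform boundedness of the Jacobian noted in Remark~\ref{rmk:on big O}---hold uniformly over initial states in any fixed bounded set, by virtue of the global Lipschitz and bounded-second-derivative hypotheses of Assumption~\ref{ass:general assumption}. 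This uniformity carries the fixed-initial-state estimate over to the present $\tau$-dependent setting without changing the order of the error, completing the argument.
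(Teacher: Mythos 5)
Your proposal is correct and matches the paper's intent: the paper omits this proof, stating only that it ``follows the same arguments as Theorem~\ref{thm:endpoint error of the forward nominal-state synthesis control},'' and your reduction via time-invariance of the autonomous dynamics and the group property $\phi_\tau\circ\phi_{T-\tau}=\phi_T$ is precisely the clean way to make that statement rigorous (it is also consistent with how the paper proves the implicit version, Theorem~\ref{thm:forward implicit nominal-state synthesis step function}, by restricting the forward representation to $[T-\tau,T]$). Your additional attention to the uniformity of the $\cO(\tau^2)$ constant over the $\tau$-dependent effective initial states $\phi_{T-\tau}(x^0)$ addresses a point the paper glosses over, and is handled adequately by compactness and the global bounds of Assumption~\ref{ass:general assumption}.
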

	
	The proof of Theorem~\ref{thm:forward explicit nominal-state synthesis step function} follows the same arguments as Theorem~\ref{thm:endpoint error of the forward nominal-state synthesis control} and is therefore omitted for brevity.
	
	\subsubsection{Backward nominal-state synthesis}\label{sss:Backward nominal-state synthesis}
	
	Building on the solution representation of \eqref{eq:Hopfield nonlinear} given in Theorem~\ref{thm:forward representation}, we aim to construct step-function controls under the assumption that the terminal condition $x(T) = x^1$ is achieved.
	In contrast to the forward nominal-state synthesis, the implicit control derived here does not provide a direct condition guaranteeing that the target state is reached.
	
	We begin with the key result on which the implicit synthesis is based. The proof is presented in Section~\ref{s:smooth and Lipschitz activation function}.
	\begin{proposition}\label{pro:smooth and Lipschitz activation function}
		% Assume that $f_i$ is a $C^2$ globally Lipschitz function on $\R$ with a bounded second derivative. 
		% {\color{blue!80!magenta}   
			Let $T>0$ and $(x_0, u)\in\R^d\times\R^k$. The solution $x(\cdot)$ of \eqref{eq:Hopfield nonlinear} can be expanded  as
			\begin{equation}\label{eq:sol expansion with remainder}
				x(t) = \phi_t\left(x^0+\sum_{n=0}^{\infty}\frac{t^{n+1}Z_t^n}{(n+1)!}P_tBu-\zeta_u(t)Bu\right)
			\end{equation}
			for all $t\in [0, T]$. Here $\zeta_u(t)=\xi_u(t)+\chi_u(t)$,
			\begin{equation}\label{eq:xi}
				\xi_u(t) = \sum_{n=1}^{\infty}\int_{0}^{t}\frac{s^{n+1}}{(n+1)!}\left[\frac{d}{ds}Z_s^n\right]P_sds,
			\end{equation}
			\begin{equation}\label{eq:chi}
				\chi_u(t)= \sum_{n=1}^{\infty}\int_{0}^{t}\frac{s^n}{n!}Z_s^{n-1}D^2\psi_s(x(s))\dot{x}(s)ds,
			\end{equation}
			where $P_t=D\psi_t(x(t))$, $Z_t=DN(\psi_t(x(t)))$ and $D^2\psi_s(x(s))$ is the second derivative of $\psi_s$ at $x(s)$.
			
	\end{proposition}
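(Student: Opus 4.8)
The plan is to start from the backward representation in Theorem~\ref{thm:backward representation}, which writes $x(t) = \phi_t\bigl(x^0 + \int_0^t P_s\,Bu\,ds\bigr)$ with $P_s = D\psi_s(x(s))$, and to expand the matrix-valued integral $\int_0^t P_s\,ds$ by iterated integration by parts, carrying the factor $Bu$ to the right throughout. The engine of the whole computation is a single first-order ODE for $P_s$. Differentiating $P_s = D\psi_s(x(s))$ in $s$ and accounting both for the explicit $s$-dependence of $\psi_s$ and for the implicit dependence through the argument $x(s)$, I would combine $\frac{d}{ds}D\psi_s(y) = -DN(\psi_s(y))\,D\psi_s(y)$ (obtained by differentiating \eqref{eq::nonlinear flow psi} in $y$) with the chain rule to get
\[
\dot P_s = -Z_s P_s + G_s, \qquad G_s := D^2\psi_s(x(s))\,\dot x(s),
\]
where $Z_s = DN(\psi_s(x(s)))$. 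This is the identity that turns each integration by parts into the next term of the expansion.

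Next I would run the induction. Integrating $\int_0^t \frac{s^n}{n!}Z_s^n P_s\,ds$ by parts with antiderivative $\frac{s^{n+1}}{(n+1)!}$, differentiating $Z_s^n P_s$, and substituting $\dot P_s = -Z_s P_s + G_s$ produces exactly four pieces: the boundary term $\frac{t^{n+1}}{(n+1)!}Z_t^n P_t$ (the boundary contribution at $s=0$ vanishes thanks to the factor $s^{n+1}$), a term $-\int_0^t \frac{s^{n+1}}{(n+1)!}\bigl[\frac{d}{ds}Z_s^n\bigr]P_s\,ds$ matching the $n$-th summand of $\xi_u$, a term $-\int_0^t \frac{s^{n+1}}{(n+1)!}Z_s^n G_s\,ds$ matching the $(n{+}1)$-th summand of $\chi_u$, and a leftover $\int_0^t \frac{s^{n+1}}{(n+1)!}Z_s^{n+1}P_s\,ds$ of the same shape with $n$ replaced by $n+1$. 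Starting at $n=0$ — where the $\xi_u$ contribution vanishes because $Z_s^0 = \idty$, which is why $\xi_u$ sums from $n=1$ — and iterating up to order $N$ gives the truncated expansion plus the remainder $\int_0^t \frac{s^{N+1}}{(N+1)!}Z_s^{N+1}P_s\,ds$.

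Finally I would pass to the limit $N \to \infty$, where Assumption~\ref{ass:general assumption} supplies the uniform bounds. Since $\|f_i'\|_\infty = 1$, one has $\|Z_s\| = \|DN(\psi_s(x(s)))\| \le \lambda_{\max}(D) + \|W\| = \Lambda$, and from the variational equation $\frac{d}{ds}D\psi_s = -DN\,D\psi_s$ with $D\psi_0 = \idty$, Gronwall gives $\|P_s\| \le e^{\Lambda t}$ on $[0,t]$; the bounded second derivative of $f$ (through $\Lambda_1 = \|W\|\,\|f''\|_\infty$) together with the $C^3$ regularity of $x(\cdot)$ controls $\|\dot Z_s\|$ and $\|G_s\|$. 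These yield $\bigl\|\int_0^t \frac{s^{N+1}}{(N+1)!}Z_s^{N+1}P_s\,ds\bigr\| \le \frac{t^{N+2}}{(N+2)!}\Lambda^{N+1}e^{\Lambda t} \to 0$, and they make the $\xi_u$ and $\chi_u$ series converge absolutely by comparison with a convergent exponential-type series. Right-multiplying the resulting matrix identity $\int_0^t P_s\,ds = \sum_{n\ge 0}\frac{t^{n+1}Z_t^n}{(n+1)!}P_t - \xi_u(t) - \chi_u(t)$ by $Bu$ and inserting it into \eqref{eq:backward representation} gives the claim with $\zeta_u = \xi_u + \chi_u$.

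I expect the main obstacle to be the analytic bookkeeping at the limit step rather than the algebra of the recursion: one must verify that the leading series, the two integral families $\xi_u,\chi_u$, and the remainder are all controlled uniformly on $[0,T]$, which is precisely where the global Lipschitz, bounded-$f''$, and $C^3$ hypotheses enter. The only other delicate point is the combinatorial matching of indices — ensuring the boundary terms assemble into $\frac{t^{n+1}Z_t^n}{(n+1)!}P_t$ while the $\xi_u$-type and $\chi_u$-type remainders align with the stated series (in particular the index shift $n \mapsto n+1$ in the $\chi_u$ contribution) — so I would track those signs and shifts carefully.
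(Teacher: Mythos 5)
Your proposal is correct and follows essentially the same route as the paper: starting from the backward representation of Theorem~\ref{thm:backward representation}, writing $\int_0^t P_s\,ds=\int_0^t s' P_s\,ds$, iterating the integration by parts via the variational identity $\dot P_s=-Z_sP_s+D^2\psi_s(x(s))\dot x(s)$, and controlling the series and remainder with the bounds $\|Z_s\|\le\Lambda$, $\|P_s\|\le e^{\Lambda s}$ exactly as in Lemmas~\ref{lem:general estimates flows} and~\ref{lem:well-defined xi and chi}. Your index bookkeeping (the $n\mapsto n+1$ shift in the $\chi_u$ terms, the vanishing $\xi_u$ contribution at $n=0$) matches the paper's equation~\eqref{eq:integration by parts 1}, so no changes are needed.
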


	\begin{theorem}\label{thm:backward nominal-state synthesis}
		% {\color{blue!80!magenta}
			Let $T > 0$ and $(x^0, x^1)\in(\R^d)^2$. Assume that for any $\ell\in\Z$, 
			$i \frac{2\pi \ell}{T}\not\in\sigma(DN(\psi_T(x^1)))$. Any constant control $u \in \mathbb{R}^k$ that ensures the corresponding solution $x(\cdot)$ of \eqref{eq:Hopfield nonlinear} satisfies $x(T) = x^1$ necessarily solve
			\begin{equation}\label{eq:backward nominal-state synthesis}
				\left(\idty - \cB_T(x^1)\zeta_u(T)\right)B u = \cB_T(x^1)\left(\psi_T(x^1) - x^0\right).
			\end{equation}
			Here $\zeta_u(T)\in\cM_d(\R)$ is defined in Proposition~\ref{pro:smooth and Lipschitz activation function}, and
			% It is a constant control that steers the solution of \eqref{eq:Hopfield nonlinear} from $x^0$ to $x^1$ over the time interval $[0, T]$. 
			letting $V_T(x^1)=DN(\psi_T(x^1))$, one has
			\begin{equation}\label{eq:matrix B_T(x^1)}
				\cB_T(x^1) = D\phi_T(\psi_T(x^1))\left(e^{TV_T(x^1)}-\idty\right)^{-1}V_T(x^1).
			\end{equation}
			% and $\zeta_u(T)\in\cM_d(\R)$ is defined in Proposition~\ref{pro:smooth and Lipschitz activation function}.
			% }
	\end{theorem}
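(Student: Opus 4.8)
The plan is to mirror the proof of Theorem~\ref{thm:Forward nominal-state synthesis}, but starting from the backward expansion \eqref{eq:sol expansion with remainder} of Proposition~\ref{pro:smooth and Lipschitz activation function} rather than the forward one. Since the statement asserts only a necessary condition, I would assume throughout that $u\in\R^k$ is a constant control whose corresponding solution satisfies $x(T)=x^1$, and then derive \eqref{eq:backward nominal-state synthesis} by elementary algebraic manipulations. First I would evaluate \eqref{eq:sol expansion with remainder} at $t=T$ and apply the inverse flow $\psi_T$ to strip off the outer $\phi_T$, obtaining
\[
\psi_T(x(T)) = x^0 + \sum_{n=0}^{\infty}\frac{T^{n+1}Z_T^n}{(n+1)!}P_T Bu - \zeta_u(T)Bu.
\]
Imposing the terminal constraint $x(T)=x^1$ gives $\psi_T(x(T))=\psi_T(x^1)$ and identifies the trajectory-dependent coefficients at $t=T$ as $P_T = D\psi_T(x^1)$ and $Z_T = DN(\psi_T(x^1)) = V_T(x^1)$, so that
\[
\psi_T(x^1) - x^0 = \sum_{n=0}^{\infty}\frac{T^{n+1}V_T(x^1)^n}{(n+1)!}D\psi_T(x^1)Bu - \zeta_u(T)Bu.
\]

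Next I would left-multiply by $V_T(x^1)$ and telescope the resulting power series through \eqref{eq:exp(A)}, using $\sum_{n=0}^{\infty}(TV_T(x^1))^{n+1}/(n+1)! = e^{TV_T(x^1)}-\idty$, to reach
\[
V_T(x^1)\bigl(\psi_T(x^1)-x^0\bigr) = \bigl(e^{TV_T(x^1)}-\idty\bigr)D\psi_T(x^1)Bu - V_T(x^1)\zeta_u(T)Bu.
\]
The spectral hypothesis $i\frac{2\pi\ell}{T}\notin\sigma(V_T(x^1))$ for all $\ell\in\Z$ guarantees that $TV_T(x^1)$ has no eigenvalue in $2\pi i\Z$, hence $e^{TV_T(x^1)}$ has no eigenvalue equal to $1$ and $e^{TV_T(x^1)}-\idty$ is invertible; left-multiplying by its inverse is therefore legitimate. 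Finally, left-multiplying by $D\phi_T(\psi_T(x^1))$ and using $D\phi_T(\psi_T(x^1)) = [D\psi_T(x^1)]^{-1}$, a direct consequence of \eqref{inverse of the differential of the flow}, cancels the factor $D\psi_T(x^1)$ on the right. Recognizing the matrix $\cB_T(x^1)$ from \eqref{eq:matrix B_T(x^1)} then yields $\cB_T(x^1)(\psi_T(x^1)-x^0) = Bu - \cB_T(x^1)\zeta_u(T)Bu$, which rearranges to \eqref{eq:backward nominal-state synthesis}.

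The computation itself is routine; the conceptual point to watch is why only the necessary direction is claimed, in contrast to the biconditional of Theorem~\ref{thm:Forward nominal-state synthesis}. The obstacle is structural rather than technical: in the backward representation the coefficients $P_t,Z_t$ and the operator $\zeta_u(T)$ are all evaluated \emph{along the true trajectory} via $\psi_t(x(t))$, and the identifications $P_T=D\psi_T(x^1)$, $Z_T=V_T(x^1)$ become available only once the terminal condition $x(T)=x^1$ is imposed. There is no a priori reference orbit here, the analogue of $\phi_T(x^0)$ in the forward synthesis, on which to run the argument in reverse; consequently, solving \eqref{eq:backward nominal-state synthesis} for $u$ does not by itself force $x(T)=x^1$, which is precisely the asymmetry the theorem records. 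I would therefore present the derivation in the single forward-to-backward direction and stop at \eqref{eq:backward nominal-state synthesis}, without attempting a converse.
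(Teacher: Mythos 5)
Your proposal is correct and follows essentially the same route as the paper's proof: evaluate the backward expansion \eqref{eq:sol expansion with remainder} at $t=T$, impose $x(T)=x^1$ to identify $P_T=D\psi_T(x^1)$ and $Z_T=V_T(x^1)$, left-multiply by $V_T(x^1)$ to telescope the series into $e^{TV_T(x^1)}-\idty$, and then invert using the spectral hypothesis and the relation $[D\psi_T(x^1)]^{-1}=D\phi_T(\psi_T(x^1))$. Your closing explanation of why only the necessary direction is available (the trajectory-dependent coefficients can only be pinned down once $x(T)=x^1$ is assumed) is also consistent with the remark the paper makes after the theorem.
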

	
	\begin{remark}
		The implicit backward nominal-state synthesis in Theorem~\ref{thm:backward nominal-state synthesis} provides only a necessary condition for a constant control $u \in \mathbb{R}^k$ to achieve $x(T) = x^1$ in~\eqref{eq:Hopfield nonlinear}. In contrast, the forward synthesis~\eqref{eq:forward nominal-state synthesis} yields a sufficient condition: any $u$ solving it guarantees $x(T) = x^1$, and thus also satisfies the backward condition.
	\end{remark}
	
	Let us now present the proof of Theorem~\ref{thm:backward nominal-state synthesis}.

	\begin{proof}[\textit{Proof} of Theorem~\ref{thm:backward nominal-state synthesis}]
		Letting $t=T$ in \eqref{eq:sol expansion with remainder}, one gets from $x(T)=x^1$ and left multiplication by $V_T(x^1)$ that
		\begin{IEEEeqnarray}{rCl}\label{eq:derivation of the nonlinear input}
			V_T(x^1)(\psi_T(x^1)-x^0)
			&=&-V_T(x^1)\zeta_u(T)Bu\nonumber\\
			&&\hspace{-1cm}+(e^{TV_T(x^1)}-\idty)D\psi_T(x^1)Bu
		\end{IEEEeqnarray}
		by \eqref{eq:exp(A)}. Since $[D\psi_T(x^1)]^{-1}=D\phi_T(\psi_T(x^1))$, left multiplying  \eqref{eq:derivation of the nonlinear input} by $(e^{TV_T(x^1)}-\idty)^{-1}$ and $D\phi_T(\psi_T(x^1))$ yields
		\begin{IEEEeqnarray*}{rCl}\label{eq:derivation of the nonlinear input 1}
			D\phi_T(\psi_T(x^1))\left(e^{TV_T(x^1)}-\idty\right)^{-1}V_T(x^1)(\psi_T(x^1)-x^0)&=&\nonumber\\
			&&\hspace{-8.5cm}Bu- D\phi_T(\psi_T(x^1))\left(e^{TV_T(x^1)}-\idty\right)^{-1}V_T(x^1)\zeta_u(T)Bu.
		\end{IEEEeqnarray*}
		It follows that $u\in\R^k$ solves the implicit equation
		\begin{equation}\label{eq:solving}
			\left[\idty-\cB_T(x^1)\zeta_u(T)\right]Bu=\cB_T(x^1)(\psi_T(x^1)-x^0)
		\end{equation}
		which is~\eqref{eq:backward nominal-state synthesis}, where $\cB_T(x^1)$ is defined by \eqref{eq:matrix B_T(x^1)}. %This completes the proof of the theorem.
	\end{proof}

	Using the same machinery that leads to Theorem~\ref{thm:endpoint error of the forward nominal-state synthesis control}, we obtain the following result. The proof is omitted for brevity.

	\begin{theorem}\label{thm:endpoint error of the backward nominal-state synthesis control}
		Under the assumptions of Theorem~\ref{thm:backward nominal-state synthesis}, let $\overline{x}(\cdot)$ denote the solution of~\eqref{eq:Hopfield nonlinear} corresponding to a constant control $\overline{u} \in \mathbb{R}^k$ satisfying
		\begin{equation}\label{eq:approximation of the backward nominal-state synthesis control}
			B \overline{u} = \left[\idty-e^{-T V_T(x^1)}\right]^{-1} V_T(x^1) \left(\psi_T(x^1) - x^0\right)
		\end{equation}
		where $V_T(x^1) := DN(\psi_T(x^1))$. Then, the following endpoint error estimate holds
		\begin{equation}\label{eq:endpoints error backward nominal-state synthesis control}
			|\overline{x}(T) - x^1| \underset{T \sim 0}{=} \cO(T^2).
		\end{equation}
	\end{theorem}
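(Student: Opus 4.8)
The plan is to mirror the proof of Theorem~\ref{thm:endpoint error of the forward nominal-state synthesis control}, substituting the backward solution expansion of Proposition~\ref{pro:smooth and Lipschitz activation function} for the forward one of Proposition~\ref{pro:smooth and Lipschitz activation function 1}. The first ingredient I would prove is the backward counterpart of Proposition~\ref{pro:control synthesis in small time 1}: a small-time expansion of $\zeta_u(T)=\xi_u(T)+\chi_u(T)$ analogous to \eqref{eq:expansion of varphi_u}, obtained by running the estimates of Section~\ref{ss:spectral norm of varphi_u(T)} with the roles of $\phi_t$ and $\psi_t$ (hence of $D\phi_t,D^2\phi_t$ and $D\psi_t,D^2\psi_t$) interchanged and with $V:=V_T(x^1)=DN(\psi_T(x^1))$ in place of $U_T(x^0)$. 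Combined with \eqref{eq:matrix B_T(x^1)} this should give the backward analogue of \eqref{eq:A_T and varphi_u}, namely $\idty-\cB_T(x^1)\zeta_u(T)\underset{T\sim0}{=}D\phi_T(\psi_T(x^1))\,e^{-TV}+\cO(T^2)$, where, exactly as in Remark~\ref{rmk:on big O}, one power of $T$ is recovered from the factor $(e^{TV}-\idty)^{-1}=\cO(T^{-1})$ hidden in $\cB_T(x^1)$.

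With this expansion in hand I would substitute $\overline{u}$ from \eqref{eq:approximation of the backward nominal-state synthesis control} into \eqref{eq:sol expansion with remainder} at $t=T$ and track the leading terms, just as in \eqref{eq:sol expansion with remainder 1 tilde u}. Using $\sum_{n\ge0}\frac{T^{n+1}V^{n}}{(n+1)!}V=e^{TV}-\idty$ and the commutativity of all functions of $V$, the series term $(e^{TV}-\idty)V^{-1}D\psi_T(x^1)B\overline{u}$ cancels the matching first term of the expansion of $\zeta_u(T)$, leaving the argument of $\phi_T$ equal to $x^0+(\idty-e^{-TV})V^{-1}B\overline{u}+\cO(T^2)$. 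The definition of $\overline{u}$ is precisely the one for which $(\idty-e^{-TV})V^{-1}B\overline{u}=\psi_T(x^1)-x^0$, so this argument equals $\psi_T(x^1)+\cO(T^2)$; applying $\phi_T$ and using $\phi_T(\psi_T(x^1))=x^1$ together with $B\overline{u}=\cO(T^{-1})$ then yields $\overline{x}(T)=x^1+\cO(T^2)$.

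The one genuine obstacle --- and the reason Theorem~\ref{thm:backward nominal-state synthesis} gave only a necessary condition --- is that the matrices $Z_T=DN(\psi_T(x(T)))$ and $P_T=D\psi_T(x(T))$ in \eqref{eq:sol expansion with remainder} are anchored at the \emph{unknown} endpoint $x(T)$, not at $\psi_T(x^1)$ as the clean cancellation above tacitly assumes; the forward expansion did not face this issue because it is anchored at the known point $\phi_T(x^0)$. I would remove it in one of two ways. Directly, by a bootstrap: the crude bound $|x(T)-x^1|=\cO(T)$ (from $|B\overline{u}|=\cO(T^{-1})$ and the global Lipschitz bounds of Assumption~\ref{ass:general assumption}) lets me replace $Z_T,P_T$ by $V,D\psi_T(x^1)$ up to $\cO(T)$, which after multiplication by $B\overline{u}$ reinjects only an $\cO(T^2)$ error, closing the estimate. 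More cleanly, I would use time reversal: $y(s):=x(T-s)$ solves $\dot y=-N(y)-B\overline{u}$, whose drift $-N$ is again $C^2$, globally Lipschitz with bounded second derivative, so the endpoint machinery of Theorem~\ref{thm:endpoint error of the forward nominal-state synthesis control} applies to it verbatim; since $\overline{u}$ is exactly the forward control steering $x^1$ to $x^0$ for this reversed system (its reference Jacobian being $-V$), that theorem gives $|y(T)-x^0|=\cO(T^2)$, and transporting this initial-time error back through the flow with a Gronwall estimate --- the Lipschitz factor $e^{LT}=1+\cO(T)$ --- converts it into the desired terminal bound $|\overline{x}(T)-x^1|=\cO(T^2)$.
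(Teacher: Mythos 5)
Your proposal is correct, and its first route is precisely the ``same machinery'' that the paper invokes when it omits this proof: mirror Proposition~\ref{pro:control synthesis in small time 1} and the computation \eqref{eq:sol expansion with remainder 1 tilde u} with $\phi_t$ and $\psi_t$ interchanged and $V_T(x^1)$ in place of $U_T(x^0)$. You add something the paper leaves implicit: unlike \eqref{eq:sol expansion with remainder 1}, whose boundary terms $DN(\phi_T(x^0))$ and $D\phi_T(x^0)$ sit at the \emph{known} point, the backward expansion \eqref{eq:sol expansion with remainder} carries $Z_T=DN(\psi_T(x(T)))$ and $P_T=D\psi_T(x(T))$ anchored at the unknown endpoint, and this must be resolved before the cancellation can be performed. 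Your bootstrap does resolve it (the crude bound $|x(T)-x^1|=\cO(T)$ itself requires the representation formula together with $T^{-1}(e^{TZ_T}-\idty)Z_T^{-1}P_T=\idty+\cO(T)$, not merely $|B\overline{u}|=\cO(T^{-1})$ and Lipschitz bounds, but it goes through; note that $\|\zeta_u(T)B\overline{u}\|$ is quadratic in $|B\overline{u}|$ by \eqref{eq:norm xi}--\eqref{eq:norm chi}, hence $\cO(T)$, which is what closes the loop). Your time-reversal argument is a genuinely different and cleaner route: the reversed drift $-N$ satisfies Assumption~\ref{ass:general assumption}, the forward synthesis for $\dot y=-N(y)-B\overline{u}$ from $x^1$ to $x^0$ produces exactly $\overline{u}$ (since $[e^{-TV}-\idty]^{-1}(-V)(x^0-\psi_T(x^1))=[\idty-e^{-TV}]^{-1}V(\psi_T(x^1)-x^0)$ and $\sigma(-V)$ satisfies the same resonance condition), and Gronwall transports the $\cO(T^2)$ error back with a factor $e^{\Lambda T}=1+\cO(T)$. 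This derivation gets the backward estimate for free from Theorem~\ref{thm:endpoint error of the forward nominal-state synthesis control} and avoids re-deriving the expansion of $\zeta_u(T)$ altogether; it would be a worthwhile remark to include.
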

	
	The following result provides a necessary and sufficient condition for \eqref{eq:approximation of the backward nominal-state synthesis control} to admit at least one solution.
	\begin{corollary}\label{thm:necessary and sufficient cond for solvability nonlinear backward}
		Under the hypotheses of Theorem~\ref{thm:backward nominal-state synthesis}, \eqref{eq:approximation of the backward nominal-state synthesis control} admits at least one solution $\tilde{u} \in \mathbb{R}^k$ if and only if 
		\begin{equation}
			\left[\idty-e^{-T V_T(x^1)}\right]^{-1} V_T(x^1) \left(\psi_T(x^1) - x^0\right)\in\im B.
		\end{equation}
		In this case, the least-norm constant control is given by
		\begin{equation}
			\overline{u} = B^\dagger\,\left[\idty-e^{-T V_T(x^1)}\right]^{-1} V_T(x^1) \left(\psi_T(x^1) - x^0\right).
		\end{equation}
		% where $B^\dagger\,$ denotes the Moore–Penrose pseudo-inverse of $B$.
	\end{corollary}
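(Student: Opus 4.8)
The plan is to recognize that \eqref{eq:approximation of the backward nominal-state synthesis control} is simply a linear equation $B\overline{u} = w$ in the unknown $\overline{u}\in\R^k$, whose right-hand side
\[
w := \left[\idty-e^{-T V_T(x^1)}\right]^{-1} V_T(x^1)\left(\psi_T(x^1)-x^0\right)\in\R^d
\]
is a fixed vector that does not depend on $\overline{u}$. First I would verify that $w$ is well-defined, i.e. that the indicated inverse exists. The eigenvalues of $e^{-TV_T(x^1)}$ are exactly $e^{-T\mu}$ for $\mu\in\sigma(V_T(x^1))$, so $\idty-e^{-TV_T(x^1)}$ is singular precisely when $e^{-T\mu}=1$ for some eigenvalue $\mu$, that is, when $-T\mu\in 2\pi i\Z$, equivalently $\mu = i\frac{2\pi\ell}{T}$ for some $\ell\in\Z$. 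The spectral hypothesis of Theorem~\ref{thm:backward nominal-state synthesis}, namely $i\frac{2\pi\ell}{T}\notin\sigma(DN(\psi_T(x^1)))=\sigma(V_T(x^1))$ for all $\ell\in\Z$, excludes exactly this, so $\idty-e^{-TV_T(x^1)}$ is invertible and $w$ is a genuine element of $\R^d$.

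With $w$ fixed, the statement reduces to two elementary facts of linear algebra. For solvability, the equation $B\overline{u}=w$ admits a solution $\overline{u}\in\R^k$ if and only if $w$ lies in the column space of $B$, which is precisely the condition $w\in\im B$; this yields the claimed necessary and sufficient condition. For the least-norm selection, I would invoke the defining properties of the Moore--Penrose pseudo-inverse: when the system is consistent one has $BB^\dagger w = w$, so $\overline{u}=B^\dagger w$ is a solution, and any other solution $\overline{u}'$ decomposes as $\overline{u}'=B^\dagger w + v$ with $v\in\ker B$, while $B^\dagger w$ lies in $(\ker B)^\perp=\im B^\top$. Orthogonality then gives $|\overline{u}'|^2=|B^\dagger w|^2+|v|^2\ge|B^\dagger w|^2$, identifying $B^\dagger w$ as the unique minimal-Euclidean-norm constant control.

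I do not expect a genuine obstacle here: as the paper already remarks for the forward counterpart (Corollary~\ref{cor:necessary and sufficient cond for solvability nonlinear forward}), the result is immediate once the right-hand side is recognized as a constant vector, so the linear system $B\overline{u}=w$ is entirely standard. The only point requiring care is the well-definedness of $w$, and this is handled completely by the spectral assumption inherited from Theorem~\ref{thm:backward nominal-state synthesis}; no further properties of the nonlinear flow $\phi_T,\psi_T$ or of its differential beyond what is already established are needed, since those quantities do not appear in \eqref{eq:approximation of the backward nominal-state synthesis control}.
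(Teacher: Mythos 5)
Your proposal is correct and follows exactly the route the paper intends: the paper states this corollary without proof (treating it, like its forward counterpart, as immediate once the right-hand side is seen to be a fixed vector $w$, so that solvability of $B\overline{u}=w$ is equivalent to $w\in\im B$ and the least-norm solution is $B^\dagger w$). Your additional verification that the spectral hypothesis guarantees invertibility of $\idty-e^{-TV_T(x^1)}$, and hence well-definedness of $w$, is a correct and worthwhile detail that the paper leaves implicit.
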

	
	Finally, one has the following result that synthesizes a step function for a large time horizon. % and provides the corresponding endpoint error estimate.
	
	\begin{theorem}\label{thm:backward nominal-state synthesis step function}
		Let $(x^0, x^1)\in(\R^d)^2$ and $T\ge\tau$, where $\tau>0$ be such that for any $\ell\in\Z$, 
		$i \frac{2\pi \ell}{\tau}\not\in\sigma(DN(\psi_T(x^1)))$. Let $\overline{u}_\tau\in\R^k$ be a solution of 
		\begin{equation}
			B\overline{u}_\tau= \left[\idty-e^{-\tau V_T(x^1)}\right]^{-1} V_T(x^1) \left(\psi_T(x^1) - x^0\right)
		\end{equation}
		where $V_T(x^1)=DN(\psi_T(x^1))$. Then, the solution $x(\cdot)$ to \eqref{eq:Hopfield nonlinear} corresponding to the step function $\overline{u}_{sf}:[0,T]\to\R^k$,
		\begin{equation}
			\overline{u}_{sf}(t) = \begin{cases}
				0&\quad\mbox{if}\quad 0\le t\le T-\tau\\
				\overline{u}&\quad\mbox{if}\quad T-\tau< t\le T
			\end{cases}
		\end{equation}
		satisfies 
		\begin{equation}\label{eq:endpoints error backward nominal-state synthesis step function}
			|x(T)-x^1|\underset{\tau \sim 0}{=} \cO(\tau^2).
		\end{equation}
	\end{theorem}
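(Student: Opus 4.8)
The plan is to use the autonomous (time-invariant) structure of \eqref{eq:Hopfield nonlinear} to collapse the step-function transfer into a single constant-control transfer over the terminal window $(T-\tau,T]$, where the short-time machinery of Proposition~\ref{pro:smooth and Lipschitz activation function} and Theorem~\ref{thm:endpoint error of the backward nominal-state synthesis control} already delivers an $\cO(\tau^2)$ endpoint estimate. First I would note that $\overline{u}_{sf}\equiv 0$ on $[0,T-\tau]$, so on that window the trajectory coincides with the uncontrolled flow and hence $x(T-\tau)=\phi_{T-\tau}(x^0)$; the sole effect of the first window is to transport the initial datum forward by the free dynamics. Since $N$ does not depend on $t$, the shifted curve $s\mapsto x(T-\tau+s)$ solves \eqref{eq:Hopfield nonlinear} on $[0,\tau]$ with initial state $\phi_{T-\tau}(x^0)$, constant input $\overline{u}_\tau$, and horizon $\tau$, and its value at $s=\tau$ is exactly $x(T)$.

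I would then apply the backward expansion \eqref{eq:sol expansion with remainder} to this shifted subproblem at its final time $\tau$, substitute the prescribed $\overline{u}_\tau$, and repeat the algebra of Theorem~\ref{thm:endpoint error of the backward nominal-state synthesis control}: left-multiplication by the relevant Jacobian, use of the exponential identity $\sum_{n\ge 0}(-\tau V)^{n+1}/(n+1)!=e^{-\tau V}-\idty$, and the small-$\tau$ expansion of the remainder $\zeta_u$ analogous to Proposition~\ref{pro:control synthesis in small time 1}. As in that proof, the $\cO(\tau^0)$ and $\cO(\tau)$ contributions to $x(T)-x^1$ are designed to cancel, and only the quadratic remainder survives, yielding \eqref{eq:endpoints error backward nominal-state synthesis step function}.

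The step I expect to be the crux is the identification of the reference quantities governing the subproblem on $(T-\tau,T]$. In the forward step-function synthesis this is automatic: the group law $\phi_\tau\circ\phi_{T-\tau}=\phi_T$ collapses the intermediate nominal state into $\phi_T(x^0)$, so the short-time formula reproduces $U_T(x^0)=DN(\phi_T(x^0))$ and the driver $x^1-\phi_T(x^0)$ verbatim. The backward case admits no such collapse, because the target $x^1$ is transported backward only over the short duration $\tau$; the natural nominal state for the subproblem is $\psi_\tau(x^1)$, with Jacobian $DN(\psi_\tau(x^1))$ and driver $\psi_\tau(x^1)-\phi_{T-\tau}(x^0)$. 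The delicate point is therefore to verify that the $\cB$-type prefactor and right-hand side determining $\overline{u}_\tau$ agree, up to an $\cO(\tau)$ perturbation, with those produced by Theorem~\ref{thm:endpoint error of the backward nominal-state synthesis control} for this shifted datum, so that the linear-in-$\tau$ error terms still annihilate and the transfer remains accurate to order $\tau^2$. Once this matching is secured, the estimate \eqref{eq:endpoints error backward nominal-state synthesis step function} follows immediately.
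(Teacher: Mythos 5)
Your reduction of the step-function transfer to a single constant-control subproblem on $[T-\tau,T]$ is the right skeleton: with $u\equiv 0$ on $[0,T-\tau]$ one has $x(T-\tau)=\phi_{T-\tau}(x^0)$, and time-invariance turns the terminal window into an instance of the short-horizon backward synthesis with horizon $\tau$. This mirrors how the paper handles the forward analogue (Theorem~\ref{thm:forward implicit nominal-state synthesis step function}), and you have correctly located the crux: the subproblem that Theorem~\ref{thm:endpoint error of the backward nominal-state synthesis control} actually solves (initial state $\phi_{T-\tau}(x^0)$, target $x^1$, horizon $\tau$) is governed by the Jacobian $DN(\psi_\tau(x^1))$ and the driver $\psi_\tau(x^1)-\phi_{T-\tau}(x^0)$, whereas the statement prescribes $DN(\psi_T(x^1))$ and $\psi_T(x^1)-x^0$.

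The gap is that the matching you defer to cannot be secured: the two sets of reference quantities do not agree up to $\cO(\tau)$, they differ at order one. Indeed, $\psi_\tau(x^1)-\phi_{T-\tau}(x^0)=\phi_{T-\tau}(\psi_T(x^1))-\phi_{T-\tau}(x^0)\to x^1-\phi_T(x^0)$ as $\tau\to 0$ with $T$ fixed, which is in general far from $\psi_T(x^1)-x^0$; likewise $DN(\psi_\tau(x^1))\to DN(x^1)\neq DN(\psi_T(x^1))$. Since the synthesized input has magnitude $\cO(1/\tau)$ and acts for a time $\tau$, an order-one discrepancy in the driver produces an order-one endpoint error (and even an $\cO(\tau)$ agreement of the drivers would only buy $\cO(\tau)$, not $\cO(\tau^2)$). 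A clean check is the linear case $N(x)=Ax$: the prescribed control gives $x(T)=e^{TA}x^0+A^{-1}(e^{\tau A}-\idty)Bu$ with $Bu=[\idty-e^{-\tau A}]^{-1}A(e^{-TA}x^1-x^0)$, and since $(e^{\tau A}-\idty)[\idty-e^{-\tau A}]^{-1}=e^{\tau A}$ this yields $x(T)=e^{TA}x^0+e^{\tau A}(e^{-TA}x^1-x^0)\to e^{TA}x^0+e^{-TA}x^1-x^0$, whose distance to $x^1$ is $|(e^{TA}-\idty)x^0+(e^{-TA}-\idty)x^1|$, generically nonzero. By contrast, with the subproblem-consistent data $V=DN(\psi_\tau(x^1))$ and driver $\psi_\tau(x^1)-\phi_{T-\tau}(x^0)$, the linear case is exact and the nonlinear case gives $\cO(\tau^2)$ by a verbatim application of Theorem~\ref{thm:endpoint error of the backward nominal-state synthesis control} to the shifted data. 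The forward step-function result escapes this because the group law $\phi_\tau\circ\phi_{T-\tau}=\phi_T$ collapses the subproblem quantities onto $\phi_T(x^0)$; there is no analogous collapse of $\psi_\tau(x^1)$ onto $\psi_T(x^1)$. So your argument, completed honestly, establishes the corrected formula rather than the one printed, and the step you flagged as delicate is precisely where the proof of the printed formula breaks down.
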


	\begin{remark}\label{rmk:complementarity of the synthesis}
		A key distinction between the forward and backward nominal-state syntheses lies in their respective spectral conditions: the forward synthesis depends on the spectrum at the initial state~$x^0$, while the backward synthesis relies on that at the target state~$x^1$.
		
		In practice, only one of these conditions may be satisfied, depending on the region of the state space. This asymmetry underscores the \emph{complementarity} of the two syntheses---each offers a valid control strategy under different local spectral properties, making their coexistence practically valuable.
		
		This contrast also reflects a deeper difference between linear and nonlinear control. In the linear case, controllability depends on a uniform spectral condition (cf.~Proposition~\ref{pro:controllability in the linear case}); failure at one point implies failure everywhere. In the nonlinear setting, local spectral conditions at either the initial or target can independently ensure controllability, highlighting a key advantage of the nonlinear synthesis framework. % developed here.
	\end{remark}
	
	\subsection{Forward and Backward syntheses}\label{ss:forward and Backward syntheses} 
	
	Although Sections~\ref{sss:Forward nominal-state synthesis} and~\ref{sss:Backward nominal-state synthesis} focus on the \emph{forward} and \emph{backward nominal-state} syntheses, respectively, it is worth emphasizing that both arise from a broader family of synthesis strategies derived from the dual representations of the nonlinear system~\eqref{eq:Hopfield nonlinear} in Theorems~\ref{thm:backward representation} and~\ref{thm:forward representation}. Each representation admits at least two distinct expansion strategies---depending on the chosen integration by parts identity---that lead to meaningful control synthesis formulations.
	
	Specifically, starting from the solution representation \eqref{eq:backward representation}, we observe two synthesis paths:
	\begin{enumerate}
		\item[(i)] Applying the identity $\int_0^T D\psi_t\,dt = \int_0^T t' D\psi_t\,dt$ yields the implicit \emph{backward nominal-state} synthesis
		% , valid \emph{only if} the required terminal condition is satisfied. This synthesis approach has been 
		fully investigated in Section~\ref{sss:Backward nominal-state synthesis}.
		
		\item[(ii)] Using instead $\int_0^T D\psi_t\,dt = \int_0^T (t - T)' D\psi_t\,dt$ gives rise to what we refer to as an implicit \emph{backward initial-state} synthesis. This formulation yields an expression of the control that provides a necessary and sufficient condition for reachability, and it is not analyzed in detail here. % in the present manuscript.
	\end{enumerate}
	
	Similarly, from the solution representation \eqref{eq:forward representation}, two analogous options arise:
	\begin{enumerate}
		\item[(i)] Using the identity $\int_0^T D\phi_{T-t}\,dt = \int_0^T (t - T)' D\phi_{T-t}\,dt$ leads to the implict \emph{forward nominal-state } synthesis fully developed in Section~\ref{sss:Forward nominal-state synthesis}.
		
		\item[(ii)] Alternatively, using $\int_0^T D\phi_{T-t}\,dt = \int_0^T t' D\phi_{T-t}\,dt$ results in what we refer to as an implicit \emph{forward final-state synthesis}. This yields a formally valid expansion and a necessary condition on the control, and it is not investigated in this work.
	\end{enumerate}
	
	\subsection{Reachability via control synthesis methods}\label{ss:reachability via control synthesis methods}
	
	In this section, we summarize the main results from Sections~\ref{sss:Forward nominal-state synthesis} and~\ref{sss:Backward nominal-state synthesis} into a set of operational insights framed in terms of the input matrix~$B$. Specifically, we analyze how the set of states reachable from a given initial condition $x^0 \in \mathbb{R}^d$ over the interval $[0, T]$ depends on the structure of $B$, under both constant and step-function control strategies synthesized via the proposed methods.
	
	Given $x^0 \in \mathbb{R}^d$, we say that a state $x^1 \in \mathbb{R}^d$ is \emph{c-reachable over} $[0, T]$ from $x^0$ if there exists a constant control $u \in \mathbb{R}^k$ such that the solution $x(\cdot)$ of \eqref{eq:Hopfield nonlinear} corresponding to $u$ satisfies $x(T) = x^1$. The c-reachable set is denoted by (for step-function controls, we denote it by $\mathcal{R}_{sc}(\tau, T, x^0)$)
	\begin{equation}\label{eq:reachable set nonautonomous}
		\mathcal{R}_c(T, x^0) = \left\{ x(T) \, \middle| \, 
		\begin{aligned}
			&x(\cdot) \text{ solves } \eqref{eq:Hopfield nonlinear} \\
			&\text{with constant control } u \in \mathbb{R}^k
		\end{aligned}
		\right\}.
	\end{equation}
	We say that \eqref{eq:Hopfield nonlinear} is \emph{controllable over} $[0, T]$ from $x^0$ with a constant control if $\mathcal{R}_c(T, x^0) = \mathbb{R}^d$. If this holds for every $x^0 \in \mathbb{R}^d$, then~\eqref{eq:Hopfield nonlinear} is said to be \emph{completely controllable} over $[0, T]$ with constant controls.
	
	\subsubsection{The linear case}\label{sss:receability linear case}
	It is convenient to begin with the linear system \eqref{eq:linear model}, which helps understanding the fully nonlinear setting. In the linear control framework, controllability with a time-varying control is guaranteed if the Kalman rank condition or, equivalently, the invertibility of the controllability Gramian holds. These conditions are independent of $T > 0$ and $x^0$, and they allow for flexibility in choosing the input matrix $B \in \mathcal{M}_{d,k}(\mathbb{R})$ satisfying what we refer to as the \emph{time-varying controllability condition}. %An input matrix satisfying this condition is called \emph{time-varying control admissible}.
	
	However, as shown in Remark~\ref{rmk:a priori analysis}, the question becomes more delicate when restricting to constant controls. Notably, even the trivial case $B = \idty$ may fail to achieve controllability with a constant control, despite satisfying the time-varying controllability condition.
	
	Under the spectral condition in Proposition~\ref{pro:controllability in the linear case}, Corollary~\ref{cor:necessary and sufficient cond for solvability linear} shows that~\eqref{eq:linear model} is \emph{completely controllable with constant controls} over $[0, T]$ if and only if $k = d$ and $B$ is invertible. Otherwise, if $k < d$ or $B$ is not invertible, then
	\begin{equation}
		\mathcal{R}_c(T, x^0) \varsubsetneq \mathbb{R}^d, \quad \text{for all } x^0 \in \mathbb{R}^d.
	\end{equation}
	Hence, the size of $\mathcal{R}_c(T, x^0)$ is determined by the rank of $B$: the smaller $k$ is relative to $d$, the fewer the number of target states that are reachable from $x^0$ via constant control.
	
	\subsubsection{The Nonlinear Case}
	We now turn to the fully nonlinear setting and analyze the c-reachable set $\mathcal{R}_c(T, x^0)$, leveraging the forward nominal-state synthesis from Section~\ref{sss:Forward nominal-state synthesis}.
	
	Unlike the linear case, the synthesis equation in Theorem~\ref{thm:Forward nominal-state synthesis} is implicit in $u$ unless $\varphi_u(T) = 0$. When $\varphi_u(T) = 0$, the analysis from Section~\ref{sss:receability linear case} directly applies. If $\varphi_u(T) \neq 0$, Proposition~\ref{pro:control synthesis in small time 1} shows that for sufficiently small $T > 0$, the control $u$ satisfies \eqref{eq:equation for u general}.  As a result, \eqref{eq:Hopfield nonlinear} is \emph{completely controllable with constant controls} over $[0, T]$ for small $T > 0$ if and only if $k = d$ and $B$ is invertible. Otherwise, when $k < d$ or $B$ is not invertible, the c-reachable set satisfies
	\[
	\mathcal{R}_c(T, x^0) \varsubsetneq \mathbb{R}^d, \quad \text{for all } x^0 \in \mathbb{R}^d,
	\]
	and its size is again governed by the rank of $B$.
	
	Still, when $\varphi_u(T) \neq 0$, the small-time reachability result allows one to extend the synthesis to arbitrary horizons $T > 0$ via \emph{step-function controls}. Specifically, let $\tau > 0$ be such that for any $\ell\in\Z$, $\frac{2\pi\ell}{\tau}\notin\sigma(DN(\phi_T(x^0))$. Then, for $T \le \tau$, we have $\mathcal{R}_{sc}(\tau, T, x^0) = \mathcal{R}_c(\tau, x^0)$. While for $T > \tau$, Theorem~\ref{thm:forward implicit nominal-state synthesis step function} shows that system \eqref{eq:Hopfield nonlinear} is \emph{completely controllable with step-function controls} over $[0, T]$ if and only if $k = d$ and $B$ is invertible. Otherwise, one has
	\[
	\mathcal{R}_{sc}(\tau, T, x^0) \varsubsetneq \mathbb{R}^d, \quad \text{for all } x^0 \in \mathbb{R}^d
	\]
	and the sc-reachable set structure again depends on $\operatorname{rank}(B)$.

	Let us characterize $\mathcal{R}_c(T, x^0)$ for the input matrix traditionally considered in network neuroscience~\cite{gu2015controllability}, namely
	\begin{equation}\label{eq:example from theoretical neuroscience}
		B = \begin{bmatrix}e_{1} & e_{2} & \dots & e_{k}\end{bmatrix},\qquad k<d,
	\end{equation}
	where  $e_{i}$  is the $i-$th canonical basis vector of $\R^d$.
	
	\begin{proposition}\label{pro:c-recheable set for canonical matrices}
		Let $T > 0$ and $(x^0, x^1)\in(\R^d)^2$. Assume that for any $\ell\in\Z$, 
		$i \frac{2\pi \ell}{T}\not\in\sigma(U_T(x^0))$. If the input matrix $B$ is given by \eqref{eq:example from theoretical neuroscience}, then the c-reachable set satisfies
		\begin{equation}\label{eq:c-recheable set for canonical matrices}
			\begin{split}
				\mathcal{R}_c(T,x^0) \underset{T \sim 0}{=} \phi_T(x^0) &+ \ker M_T(x^0) + \cO(T^2),\\
				\dim(\ker M_T(x^0)) &= k,
			\end{split}
		\end{equation}  
		and the constant input $u\in\R^k$ that steers~\eqref{eq:Hopfield nonlinear} from $x^0\in\R^d$ to $x^1\in\phi_T(x^0) + \ker M_T(x^0)$ is given by
		\begin{equation}\label{eq:control set for canonical matrices}
			u = V_T(x^0)\left(x^1-\phi_T(x^0)\right).
		\end{equation}
		Here $U_T(x^0):=DN(\phi_T(x^0))$, $V_T(x^0)\in\cM_{k,d}(\R)$ collects the $k$ nonzero rows of $\cB_T(x^0):=[e^{TU_T(x^0)}-\idty]^{-1}U_T(x^0)$, and $M_T(x^0)\in\cM_{d-k,d}(\R)$ collects its $d-k$ zero rows.
	\end{proposition}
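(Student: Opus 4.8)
The plan is to read the claim directly off the explicit forward nominal-state synthesis of Theorem~\ref{thm:endpoint error of the forward nominal-state synthesis control}, once the block structure of the canonical input matrix \eqref{eq:example from theoretical neuroscience} is exploited. Recall from \eqref{eq:equation for u general} that, under the stated spectral hypothesis, a constant control $u\in\R^k$ steers \eqref{eq:Hopfield nonlinear} from $x^0$ to $x^1$ up to an $\cO(T^2)$ endpoint error if and only if
\[
Bu \underset{T\sim 0}{=} \cB_T(x^0)\left(x^1-\phi_T(x^0)\right)+\cO(T^2),
\]
with $\cB_T(x^0)=[e^{TU_T(x^0)}-\idty]^{-1}U_T(x^0)$. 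For $B$ as in \eqref{eq:example from theoretical neuroscience} the image is $\im B=\{v\in\R^d : v_{k+1}=\cdots=v_d=0\}$ and the pseudo-inverse $B^\dagger=B^\top$ extracts the first $k$ coordinates, so $Bu=(u_1,\dots,u_k,0,\dots,0)^\top$.

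First I would partition the rows of $\cB_T(x^0)$ according to the actuated coordinates: the rows indexed by the directly actuated nodes $\{1,\dots,k\}$ form $V_T(x^0)\in\cM_{k,d}(\R)$, and the rows indexed by the unactuated nodes $\{k+1,\dots,d\}$ form $M_T(x^0)\in\cM_{d-k,d}(\R)$. Reading the synthesis equation coordinatewise, the last $d-k$ rows produce the reachability constraint $M_T(x^0)(x^1-\phi_T(x^0))=\cO(T^2)$, while the first $k$ rows produce the control $u=V_T(x^0)(x^1-\phi_T(x^0))+\cO(T^2)$, which is \eqref{eq:control set for canonical matrices}. Equivalently, Corollary~\ref{cor:necessary and sufficient cond for solvability nonlinear forward} states that \eqref{eq:approximation of the forward nominal-state synthesis control} is solvable precisely when $\cB_T(x^0)(x^1-\phi_T(x^0))\in\im B$; for the canonical $B$ this is the vanishing of the last $d-k$ coordinates, i.e. $x^1-\phi_T(x^0)\in\ker M_T(x^0)$, and the least-norm control $B^\dagger\cB_T(x^0)(x^1-\phi_T(x^0))$ collapses to $V_T(x^0)(x^1-\phi_T(x^0))$. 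This identifies the nominal reachable set with the affine subspace $\phi_T(x^0)+\ker M_T(x^0)$.

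For the dimension count I would use that the spectral hypothesis $i\frac{2\pi\ell}{T}\notin\sigma(U_T(x^0))$ for all $\ell\in\Z$ forces $U_T(x^0)$ invertible (take $\ell=0$) and $e^{TU_T(x^0)}-\idty$ invertible (no eigenvalue of $U_T(x^0)$ lies in $\frac{2\pi i}{T}\Z$), hence $\cB_T(x^0)$ is an invertible $d\times d$ matrix. Its rows are linearly independent, so the $d-k$ rows comprising $M_T(x^0)$ are independent; thus $\operatorname{rank}M_T(x^0)=d-k$ and, by rank--nullity, $\dim\ker M_T(x^0)=k$. This is exact for every admissible $T$ and is independent of the small-time expansion.

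The only delicate point is the meaning of the asymptotic equality in \eqref{eq:c-recheable set for canonical matrices}: it is not a strict set identity but the statement that $\phi_T(x^0)+\ker M_T(x^0)$ approximates $\mathcal{R}_c(T,x^0)$ to order $\cO(T^2)$. I would establish this two-sidedly---using the endpoint-error estimate \eqref{eq:endpoints error forward nominal-state synthesis control} to show every $x^1$ in the affine subspace is reached up to $\cO(T^2)$ by the control \eqref{eq:control set for canonical matrices}, and the ``only if'' direction of \eqref{eq:equation for u general} to show every c-reachable $x^1$ lies within $\cO(T^2)$ of the subspace---so that the error neither destroys the affine structure nor the exact dimension $k$, the latter being inherited from the invertibility of $\cB_T(x^0)$. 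Controlling this $\cO(T^2)$ uniformly is the main thing to argue carefully; the remainder is bookkeeping with the block structure of the canonical $B$.
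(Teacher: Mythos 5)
Your proposal is correct and follows essentially the same route as the paper: invoke the explicit forward nominal-state synthesis of Theorem~\ref{thm:endpoint error of the forward nominal-state synthesis control}, split the rows of $\cB_T(x^0)$ according to the actuated and unactuated coordinates of the canonical $B$ to obtain the constraint $x^1-\phi_T(x^0)\in\ker M_T(x^0)$ together with the control formula \eqref{eq:control set for canonical matrices}, and use the invertibility of $\cB_T(x^0)$ plus rank--nullity to get $\dim\ker M_T(x^0)=k$. Your added remarks on Corollary~\ref{cor:necessary and sufficient cond for solvability nonlinear forward} and on the two-sided reading of the $\cO(T^2)$ set equality are consistent elaborations of the same argument rather than a different method.
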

	\begin{proof}
		First, it follows from Theorem~\ref{thm:endpoint error of the forward nominal-state synthesis control} that the solution $x(\cdot)$ to~\eqref{eq:Hopfield nonlinear}
		corresponding to a constant control $u\in\R^k$ solving
		\begin{equation}\label{eq:deriv 1}
			Bu = \left[e^{T U_T(x^0)} - \idty\right]^{-1} U_T(x^0) \left(x^1 - \phi_T(x^0)\right)
		\end{equation}
		satisfies (see~\eqref{eq:sol expansion with remainder 1 tilde u})
		\begin{equation}\label{eq:deriv 2}
			x(T)\underset{T \sim 0}{=}x^1+\cO(T^2).
		\end{equation}
		Next, with $B$ as in \eqref{eq:example from theoretical neuroscience}, one deduces from \eqref{eq:deriv 1} that
		\begin{equation}\label{eq:deriv 3}
			\begin{bmatrix}
				\idty_k\\
				O_{d-k,k}
			\end{bmatrix}u = \begin{bmatrix}
				V_T(x^0)\\
				M_T(x^0)
			\end{bmatrix}\left(x^1-\phi_T(x^0)\right)
		\end{equation}
		where $\idty_k\in\cM_k(\R)$ is the identity matrix, and $O_{d-k,k}\in\cM_{d-k,k}(\R)$ is the zero matrix. Since $\cB_T(x^0)$ is invertible, one has $\operatorname{rank}M_T(x^0)=d-k$ and therefore $\dim(\ker M_T(x^0))=d-(d-k)=k$. Finally, \eqref{eq:deriv 3} is equivalent to~\eqref{eq:control set for canonical matrices} and
		\begin{equation}\label{eq:deriv 4}
			x^1\in \phi_T(x^0)+\ker M_T(x^0).
		\end{equation}
		Combining~\eqref{eq:reachable set nonautonomous}, \eqref{eq:deriv 2} and~\eqref{eq:deriv 4} yields~\eqref{eq:c-recheable set for canonical matrices}.
	\end{proof}
	
	\begin{remark}\label{rmk:nonlinear underactuated approximate reachable set}
		In Proposition~\ref{pro:c-recheable set for canonical matrices}, the first-order characterization of the c-reachable set as an affine subspace is valid as long as the desired target shift $x^1 - \phi_T(x^0)=\cO(T)$. This ensures that the required constant input \eqref{eq:control set for canonical matrices} remains bounded, since to leading order $T\to 0$, one has $|u| \approx |x^1 - \phi_T(x^0)|/T$.
		%Consequently, the neglected higher-order terms remain small, justifying the affine approximation.
		
		Using the thin QR factorization of $M_T(x^0)^\top\in\R^{d\times(d-k)}$,
		\[
		M_T(x^0)^\top = \begin{bmatrix}
			Q_1&Q_2
		\end{bmatrix}\begin{bmatrix}
			R_1\\0
		\end{bmatrix},\;\begin{cases}
			Q_1\in\R^{d\times(d-k)},\,Q_2\in\R^{d\times k},\\R_1\in\R^{(d-k)\times(d-k)},
		\end{cases} 
		\]
		one finds that $\ker M_T(x^0)=\operatorname{span}(Q_2)$.
	\end{remark}
	
	\begin{remark}\label{rmk:nonlinear underactuated}
		First, the c-reachable set characterization and the associated constant input synthesis in Proposition~\ref{pro:c-recheable set for canonical matrices} for a small time horizon $T>0$ when the input matrix $B$ is given by~\eqref{eq:example from theoretical neuroscience} can be improved by explicitly considering the second derivative $D^2\phi_{T-t}$ in the expansion. We defer this analysis to future works. Next, let $T > 0$ and $x^0 \in \mathbb{R}^d$ be such that $DN(\phi_T(x^0))$ satisfies the spectral condition in Theorem~\ref{thm:Forward nominal-state synthesis}. Future work should investigate the structure and effective dimension of the c-reachable sets $\mathcal{R}_c(T, x^0)$ as functions of $\operatorname{rank}(B)$ for a general input matrix $B\in\cM_{k,d}(\R)$.
	\end{remark}

	\section{Comments on the main results of Section~\ref{s:exact controllability}}\label{s:general comments}
	
	First, if the spectral norm condition $\|W\| < \lambda_{\min}(D)$ is satisfied, then the eigenvalue-related assumptions required in Proposition~\ref{pro:controllability in the linear case},Theorems~\ref{thm:Forward nominal-state synthesis}, and~\ref{thm:backward nominal-state synthesis} are automatically fulfilled, as discussed, e.g., in Remark~\ref{rmk:linear case}. %,~\ref{rmk:nonlinear case second}, and~\ref{rmk:nonlinear case first}. 
	Note that this condition ensures that the nonlinear system \eqref{eq:Hopfield nonlinear} is contracting. 
	
	Contracting dynamics possess a range of desirable properties--see \cite{lohmiller1998contraction}--and contraction has become a standard structural assumption in many recent works on recurrent neural networks, such as \cite{davydov2022non,centorrino2023euclidean}. While the condition on $\|W\|$ is a sufficient criterion for contraction, it serves here as a simple and practically verifiable condition that ensures well-posedness of the synthesis proposed in this work.
	
	More generally, the control syntheses developed in this paper apply to any nonlinear system of the form $\dot{x}(t)=N(x(t))+ Bu$, where $u\in\R^k$ is constant and $N\in C^2(\R^d;\R^d)$ satisfies the regularity conditions
	\begin{equation}\label{eq:general assumption on DN and D2N}
		\sup_{x \in \R^d} \|DN(x)\| \le C, \qquad \sup_{x \in \R^d} \|D^2N(x)\| \le C
	\end{equation}
	for some constant $C > 0$. Notably, we do not require any explicit boundedness condition on the vector field $N$ itself, making the framework broadly applicable.
	
	This flexibility is particularly relevant for applications in machine learning, where recurrent neural networks (RNNs) often employ unbounded and non-smooth activation functions such as the ReLU: $f_i(x_i) = \mathrm{ReLU}(x_i) = \max(0, x_i) $. ReLU is differentiable everywhere except at $x_i = 0$, and it is globally Lipschitz on $\R$. In this case, the solution representations \eqref{eq:backward representation} and \eqref{eq:forward representation} no longer apply, and our control syntheses cannot be directly used. To overcome this limitation, one can consider smooth approximations of the ReLU function (e.g., softplus \cite{glorot2011deep},  swish function \cite{ramachandran2017searching}), which restore differentiability and allow the application of our synthesis results.

	\section{Examples and numerical simulations}\label{s:examples and numerical simulation} 
	This section presents some numerical simulations that underpin our theoretical study. The pipeline of the experiments and analysis is visualized in Fig.~\ref{fig:pipeline}. We included three major types of models: linear systems, ``vanilla'' tanh RNNs, and MINDy models \cite{singh2020estimation}. While vanilla RNNs are widely used in theoretical neuroscience as task-performing models, MINDy models represent another branch of models that directly approximate experimental neural data, demonstrating the different applicative contexts of our theoretical framework.
	
	\begin{figure*}[t!]
		\centering
		\includegraphics[width=0.95\linewidth, trim={0 6.8cm 0 0}, clip=true]{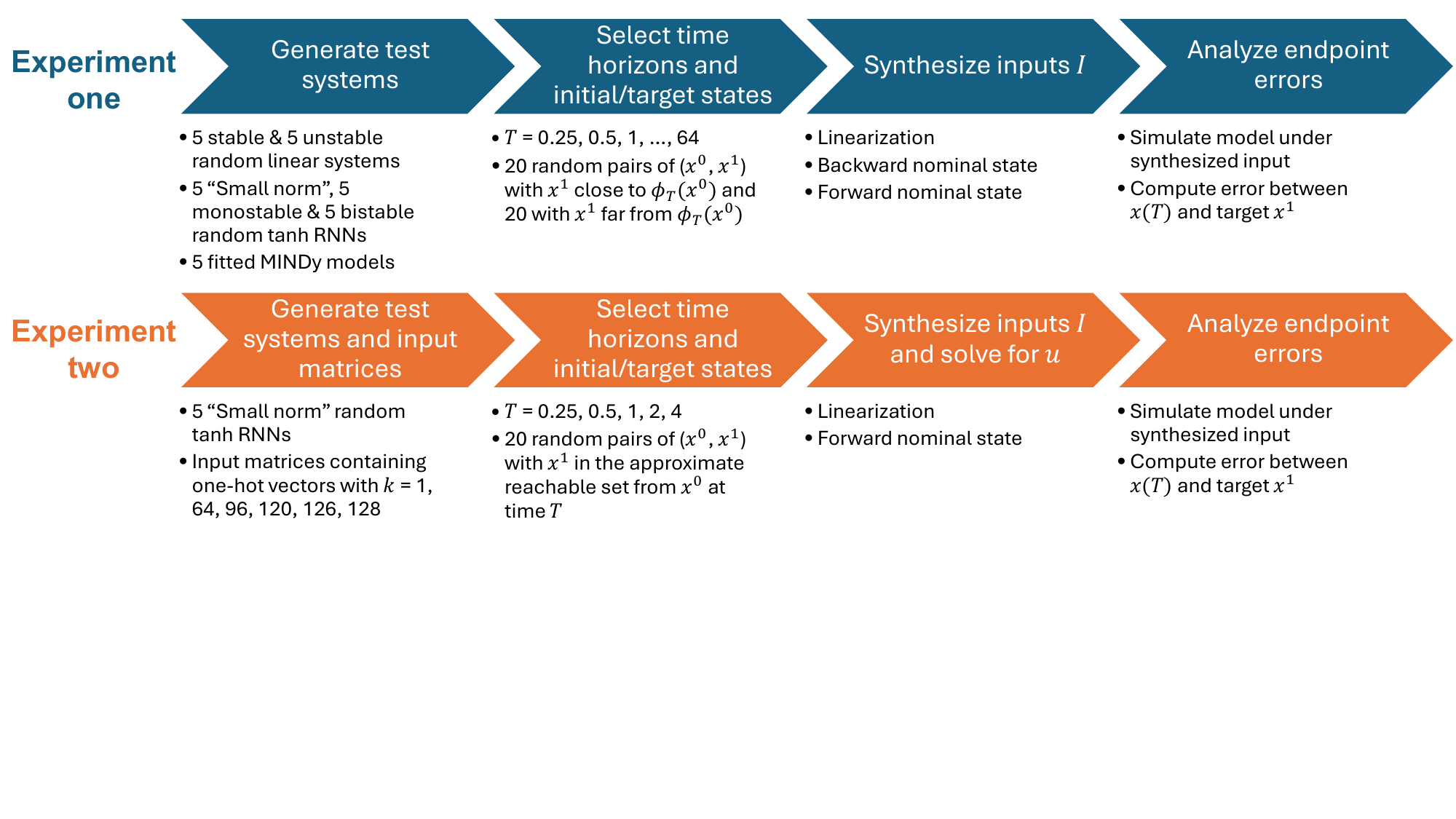}
		\caption{ \textbf{Pipeline of the numerical experiments.}}
		\label{fig:pipeline}
	\end{figure*}
	
	\subsection{Implementation}
	
	We implement the linear system control synthesis \eqref{eq:linear control particular}, the nonlinear forward nominal-state synthesis \eqref{eq:approximation of the forward nominal-state synthesis control}, and the nonlinear backward nominal-state synthesis \eqref{eq:approximation of the backward nominal-state synthesis control} in \verb|python|. The scripts will be available after publication. For the nonlinear synthesis, the flow is numerically integrated using the \verb|odeint| function from the \verb|torchdiffeq| package \cite{chen2018neuralode}. We adopted the Runge-Kutta method of order 7(8) of Dormand-Prince-Shampine, which is the highest-order method available from the package. The method accepts a new step if $\text{RMS}(\varepsilon) < 10^{-14} + 10^{-13} * \text{RMS}(x)$, where $\text{RMS}(\cdot)$ represents the root mean square norm, $\varepsilon$ being the estimated error and $x$ the current state. We found that this setting provided reasonable control of the numerical error related to flow integration. The Jacobian matrices were computed through auto-differentiation in \verb|pytorch|. Experiments were conducted on a desktop computer with an Nvidia RTX 3080 GPU. The run time for each synthesis increased as the time horizon $T$ and number of features (neurons) $d$ increased, but remained within the scope of several hundred milliseconds to several seconds. 
	%(see Figs.~\ref{fig:time_T}, \ref{fig:time_dim}).
	For a fixed $T$ and multiple desired trajectories (i.e., different pairs of $(x^{0}, x^{1})$), the synthesis is efficiently parallelized, and the run time remains almost constant when increasing the number of trajectories from one to $1000$.
	
	To construct a $d$-dimensional linear system, we randomly sampled the entries of $W$ from the normal distribution $\mathcal{N}(0, 1 / d)$. The decay matrix was set to $(\lambda_{\max}(W) - \lambda_{0})\idty$, where $\lambda_{\max}(W)\in\mathbb{R}$ is the maximum of the real parts of the eigenvalues of $W$ and $\lambda_{0}\in\mathbb{R}$ is the desired maximum real part of the eigenvalues of the dynamic matrix $A = W - D$. We tried both $\lambda_{0} = -0.1$ and $\lambda_{0} = 0.1$ to construct stable and unstable systems respectively.
	
	To construct a ``vanilla'' RNN, we set $D = \idty$ and the activation function to $\tanh$. We followed \cite{schuessler2020dynamics} and imposed a ``random plus low rank'' structure on the connectivity matrix $W = J + \mathbf{mn}^{\top}$. The matrix $J \in \cM_d(\R)$ was sampled from the normal distribution $\mathcal{N}(0, g^{2} / d)$, where $g$ is a scaling factor. $m, n\in\cM_{d, p}(\mathbb{R})$ determines the rank of the low-rank component. Such a type of model appears frequently in theoretical neuroscience studies \cite{mastrogiuseppe2018linking}. We considered three subtypes of vanilla RNNs. The first subtype is referred to as `small norm tanh RNN', constructed with $g = 0.5$ and $p = 0$ (no low-rank component). We found that such RNNs satisfied $\|WDf(0)\|\approx 1 = \lambda_{\min}(D)$. For the next two subtypes, $g$ was set to 0.9 \cite{schuessler2020dynamics} and the spectral norm condition was violated. $\mathbf{m} \in \mathbb{R}^{d}$ was sampled from standard normal distribution. $\mathbf{n} \in \mathbb{R}^{d}$ was sampled from normal distribution $\mathcal{N}(0, 1 / d^2)$ for the second subtype and set as $\mathbf{n} = \frac{1.1}{d} \mathbf{m}$ for the third subtype. It can be shown that systems of the second subtype will be monostable while systems of the third subtype will be bistable \cite{schuessler2020dynamics} as the number of neurons $d$ tends to infinity.
	
	To further evaluate the method on realistic models fit to \emph{experimental data}, we also included a set of Mesoscale Individualized Neurodynamic (MINDy) models from \cite{singh2020estimation}. MINDy models contained 100 interconnected units representing 100 brain areas, and the parameters were optimized to approximate the activation time series of these areas measured through functional magnetic resonance imaging (fMRI). Unlike most RNNs, the activation function of MINDy is heterogeneous: $f(s) = \sqrt{\alpha^{2} + (bs + 0.5)^{2}} - \sqrt{\alpha^{2} + (bs - 0.5)^{2}}$, where $b = 20/3$ is fixed and $\alpha$ is optimized over the data and differs across the 100 units. It is also worth noting that the origin is unstable in most of the MINDy models, indicating that the spectral norm condition $\|WDf(0)\| < \lambda_{\min}(D)$ was not met. However, we found that the models satisfy the eigenvalue condition, which enabled the synthesis.
	
	In the first experiment, we analyzed the endpoint error of the controlled trajectory for $T \in \{2^{-2}, 2^{-1}, \dots, 2^{6}\}$ when $B = \idty$. For each $T$, we randomly generated 5 stable and 5 unstable linear systems, 5 small-norm, 5 monostable and 5 bistable tanh RNNs, and randomly selected 5 fitted MINDy models (from a pool of 106 models). All models were 100-dimensional.  Then, we randomly generated 40 initial states $x^{0} \sim \mathcal{N}(0, \idty)$. To investigate how the deviation of $x^{1}$ from the autonomous flow end point $\phi_{T}(x^{0})$ influences the performance of the method, we set $x^{1} = \phi_{T}(x^{0}) + \varepsilon$ where $\varepsilon\sim \mathcal{N}(0, \sigma^{2}\cdot\idty)$. In half of the trials, $\sigma^{2}$ was set to 0.1 (``small deviation''); in the other half, $\sigma^{2}$ was set to 0.5 (``large deviation''). Control input was computed using the nonlinear forward \eqref{eq:approximation of the forward nominal-state synthesis control} and backward \eqref{eq:approximation of the backward nominal-state synthesis control} syntheses, and the linear synthesis \eqref{eq:linear control particular}. We recall that for the nonlinear system~\eqref{eq:Hopfield nonlinear}, when $B=\idty$, the linearized system at $x^{0}$ is given by
	\begin{equation}
		\dot{x}(t) = DN(x^{0})x(t) + N(x^{0}) + u.
	\end{equation}
	% where $\hat{x}(t)$ is the approximation of $x(t) - x^{0}$. 
	Using \eqref{eq:linear control particular}, the following input $u$ drives the linearized system from $x^0$ to $x^1$ in exact time $T$
	\begin{equation}\label{eq: linearized at x0}
		u = \left(e^{T A}-\idty\right)^{-1}A\left(x^1 - e^{T A}x^0\right) - N(x^{0})
	\end{equation}
	where $A = DN(x^{0})$. We have also tried to linearize the systems at the origin (which is always a fixed point) and obtained qualitatively similar results.
	
	% \ct{Ruiqi, invokes Proposition~\ref{pro:c-recheable set for canonical matrices} here. This will help gain space.}
	
	In the second experiment, we followed the tradition in network neuroscience \cite{gu2015controllability} and considered the input matrix defined by \eqref{eq:example from theoretical neuroscience}. For simplicity, we focused on 128-dimensional ``small norm'' tanh RNNs (with $g = 0.5$ and $p = 0$) and $T \in \{0.25, 0.5, 1, 2, 4\}$. We considered $k \in \{1, 64, 96, 120, 126, 128\}$, ranging from an extremely underactuated system to a fully actuated system. We randomly generated 5 RNNs and selected 20 initial states $x^{0}\sim\mathcal{N}(0, \idty)$ for each model and each combination of $T$ and $k$. Here, $x^{1} = \phi_{T}(x^{0}) + Q_{2}\xi$ by Proposition~\ref{pro:c-recheable set for canonical matrices}, and $Q_2$ following Remark~\ref{rmk:nonlinear underactuated approximate reachable set}, with entries of $\xi$ drawn from $\mathcal{N}(0, 0.01)$. 
	
	\subsection{Results}
	
	\begin{figure}[t!]
		\centering
		\includegraphics[width=\linewidth]{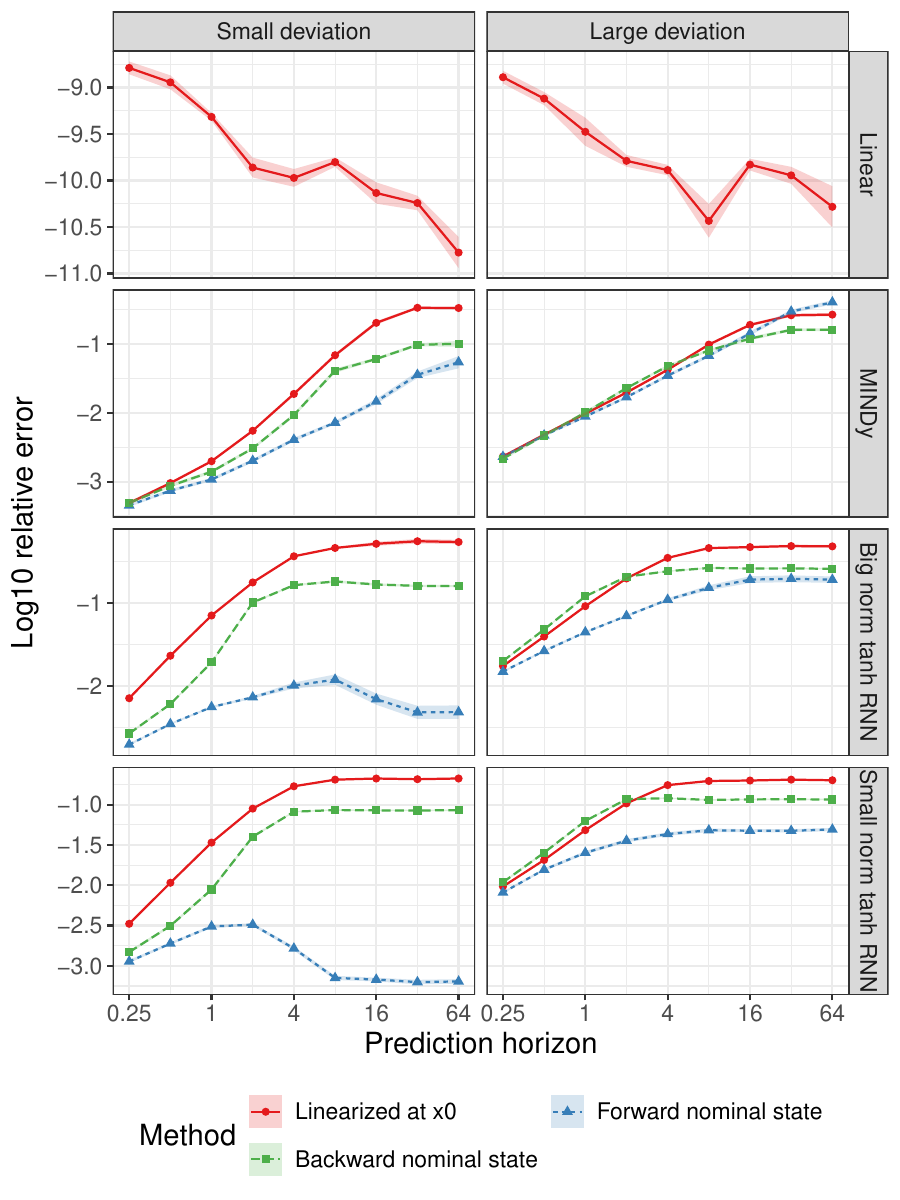}
		\caption{\textbf{Relative endpoint error under synthesized input.} The $x$-axis (log scale) indicates the time horizon $T$. The $y$-axis represents the common logarithm of the ratio between the Euclidean norm of the endpoint error $x(T) - x^{1}$ and the Euclidean norm of $x^{1} - x^{0}$.  Results were organized according to the combination of model type (rows) and how $x^1$ were specified (columns). Monostable and bistable RNNs were merged into a single category of ``Big norm tanh RNN'' due to similarity. Results using linearization \eqref{eq: linearized at x0}, nonlinear forward nominal-state synthesis \eqref{eq:approximation of the forward nominal-state synthesis control}, and nonlinear backward nominal-state synthesis \eqref{eq:approximation of the backward nominal-state synthesis control} were shown in red, blue and green respectively. Line plots and error bands represent the mean and 95\% confidence interval of log relative error.}
		% \vspace{-0.5cm}
		\label{fig:error}
	\end{figure}
	
	Results of the first experiment were summarized in Fig.~\ref{fig:error}. Monostable and bistable tanh RNNs were combined into ``Big norm tanh RNN'' due to similarity. For linear systems, the error remained small. For nonlinear systems, error generally increased as the horizon $T$ increased. The forward nominal state synthesis performed better than the backward nominal state synthesis, and both of them generally performed better than linearization, particularly for large $T$. The differences were more evident when $x^1 - \phi_{T}(x^0)$ is small (with standard deviation 0.1, left panels), where the forward synthesis sometimes outperformed linearization by orders of magnitude. Note that the MINDy models were trained to approximate timeseries with unit variance, so a ``small deviation'' with standard deviation 0.1 already represents a physically sizable difference, indicating the practical significance of our method.
	
	\begin{figure}[t!]
		\centering
		\includegraphics[width=\linewidth, trim={0 0.95cm 0 0}, clip=true]{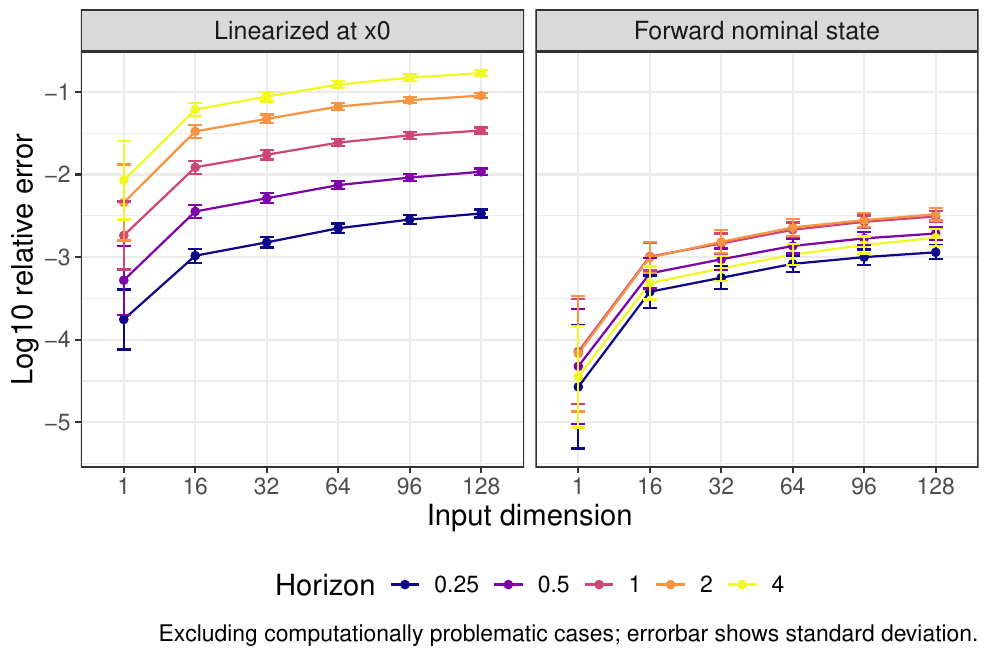}
		\caption{ \textbf{Relative endpoint error under synthesized input for underactuated systems.} We considered ``small norm tanh RNNs'' of 128 dimension. The $x$-axis (ordinal scale) indicates the input dimension $k$. The $y$-axis represents the common logarithm of the relative endpoint error. Results using linearized and nonlinear forward synthesis methods were separated into the two panels. Color indicates different time horizons $T$. Line plots and error bars represent the mean and standard deviation of log relative error over 20 experiments.}
		\label{fig:error-nonId}
	\end{figure}
	
	Results of the second experiment were summarized in Fig.~\ref{fig:error-nonId}. The nonlinear synthesis was still much better than linearization in underactuated systems, as expected. However, surprisingly, the endpoint error did not change dramatically and even decreased as the number of actuators decreased, as long as $x^1$ is close to the reachable set. These findings suggest that Proposition~\ref{pro:c-recheable set for canonical matrices} did provide a good estimation of the reachable set, and that the synthesis worked in underactuated systems just as good as (if not better than) fully-actuated systems as long as $x^{1}$ is reachable.
	
	\section{Concluding remarks and perspectives}\label{s:concluding remarks and discussion}
	This paper addressed the control synthesis problem for a class of nonlinear Hopfield-type recurrent neural networks motivated by neurostimulation applications. Using a solution representation that generalizes the variation of constants formula, we derived constant and piecewise constant inputs capable of steering the network to a desired target within a prescribed time interval. For linear activation functions, the exact controllability with constant input is possible over arbitrary time horizons. For a small time horizon, we also provided a characterization of the reachable set when the input matrix $B$ directly actuates a subset of nodes, showing that its dimension equals the rank of $B$, and that its basis can be computed efficiently via a thin QR factorization. Moreover, the constant input that guarantees reachability is given by a closed-form algebraic condition, which makes the synthesis directly applicable for tDCS. 
	% Numerical simulations validated the theory and underscored its relevance for designing tDCS. %, connecting montage design, reachable states, and practical current constraints.
	
	Future work will explore the synthesis of time-varying control inputs based on the proposed framework, with an emphasis on robustness to parameter uncertainty and external disturbances. As large-scale systems like~\eqref{eq:Hopfield nonlinear} are highly sensitive to such perturbations, designing controls that adapt dynamically while minimizing energy costs remains a key challenge---one that links controllability with energetic performance.
	
	%\appendix
	\appendices
	
	\section{General results on the drift and its associated flow maps}\label{s:general results on N}

	This section contains various supplementary results related to the vector field $N$ that were used intensively in the previous sections. The first result concerns some useful properties of the  Fréchet-differential $DN$ and the second-differential $D^2N$. % of the vector field $N$.
	\begin{lemma}\label{lem:properties on N}
		The vector field $N$ defined in \eqref{eq:nonlinear vector field N} belongs to $C^2(\R^d;\R^d)$, and it is Lipschitz continuous from $\R^d$ into itself. Moreover, the following hold
		\begin{equation}\label{eq:differential of N}
			\begin{aligned}
				DN(x) &= -D+WDf(x)\\
				\|DN(x)\|&\le \lambda_{\max}(D)+\|W\|
			\end{aligned}\qquad\forall x\in\R^d.
		\end{equation}
		\begin{equation}\label{eq:differential second of N}
			\begin{aligned}
				D^2N(x) &= WD^2f(x)\\
				\|D^2N(x)\|&\le \|f''\|_\infty\|W\|
			\end{aligned}\qquad\forall x\in\R^d.
		\end{equation}
		Here $\|f''\|_\infty=\max_{i\in\llbracket 1,d\rrbracket}\|f_i''\|_\infty$, $Df(x) = (f_i'(x_i)\delta_{i,j})$ and $D^2f(x)$ is a third order tensor such that $D^2f(x)yz\in\R^d$ for all $y,\, z\in\R^d$, and the $i$-th element is given, by 
		\begin{equation}\label{eq:remark on the second derivative of f}
			[D^2f(x)yz]_{|_i}=\sum_{j,k=1}^{d}\frac{\partial^2f_i}{\partial x_j\partial x_k}(x)y_jz_k=f_i''(x_i)y_iz_i.
		\end{equation}
		%  \begin{IEEEeqnarray}{rCl}\label{eq:remark on the second derivative of f}
			%     [D^2f(x)uv]_{|_i}&=&\sum_{j,k=1}^{d}\frac{\partial^2f_i}{\partial x_j\partial x_k}(x)u_jv_k\nonumber\\
			%     &=&f_i''(x_i)u_iv_i.
			% \end{IEEEeqnarray} 
		
	\end{lemma}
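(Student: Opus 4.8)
The plan is to proceed by elementary differential calculus, exploiting the fact that the activation map $f$ acts coordinatewise. First I would establish the $C^2$ regularity: the map $x\mapsto -Dx$ is linear, hence smooth, while $x\mapsto Wf(x)$ is the composition of the linear map $W$ with $f(x)=(f_1(x_1),\dots,f_d(x_d))^\top$. Since Assumption~\ref{ass:general assumption} guarantees each $f_i\in C^2(\R)$ and $f$ is diagonal in the sense that its $i$-th component depends only on $x_i$, the map $f$ lies in $C^2(\R^d;\R^d)$, and therefore so does $N$. Applying the chain rule and using this diagonal structure immediately yields $DN(x)=-D+WDf(x)$ with $Df(x)=(f_i'(x_i)\delta_{i,j})$, which is the first claimed identity.

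Next I would bound the spectral norm of $DN(x)$. By the triangle inequality, $\|DN(x)\|\le\|D\|+\|W\|\,\|Df(x)\|$. Because $D$ is diagonal with positive entries $\alpha_i$, its spectral norm equals its largest eigenvalue $\lambda_{\max}(D)$; because $Df(x)$ is diagonal with entries $f_i'(x_i)$, its spectral norm is $\max_i|f_i'(x_i)|\le 1$ using the normalization $\|f_i'\|_\infty=1$ from Assumption~\ref{ass:general assumption}. This gives $\|DN(x)\|\le\lambda_{\max}(D)+\|W\|$. The Lipschitz continuity of $N$ then follows from this uniform bound on $DN$: by the mean value inequality, $N$ is globally Lipschitz with constant $\Lambda=\lambda_{\max}(D)+\|W\|$.

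Finally I would treat the second derivative. The linear part $-Dx$ has vanishing second derivative, so $D^2N(x)=WD^2f(x)$. The coordinatewise structure of $f$ forces all mixed second partials to vanish, leaving only $\partial^2 f_i/\partial x_i^2=f_i''(x_i)$, which gives the tensor formula $[D^2f(x)yz]_i=f_i''(x_i)y_iz_i$. For the norm bound, I would estimate $|D^2f(x)yz|^2=\sum_{i=1}^d (f_i''(x_i))^2 y_i^2 z_i^2\le\|f''\|_\infty^2\sum_{i=1}^d y_i^2 z_i^2$, and then bound $\sum_i y_i^2 z_i^2\le\big(\max_i z_i^2\big)\sum_i y_i^2\le|y|^2|z|^2$, so that $\|D^2f(x)\|\le\|f''\|_\infty$; multiplying by $\|W\|$ yields $\|D^2N(x)\|\le\|f''\|_\infty\|W\|$. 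I expect this last tensor-norm estimate to be the only genuinely delicate step, since one must handle the multilinear operator norm carefully and recognize that the coordinatewise product $y_i^2 z_i^2$ is what permits the clean separation into $|y|^2|z|^2$; everything else is routine matrix calculus.
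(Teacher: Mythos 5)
Your proof is correct and follows essentially the same route as the paper: compute $DN$ and $D^2N$ from the diagonal structure of $f$, bound the operator norms using $\|Df(x)\|\le 1$ and $\max_i|f_i''|\le\|f''\|_\infty$, and conclude global Lipschitz continuity (the paper gets the Lipschitz bound directly from $|f(x)-f(y)|\le|x-y|$ rather than via the mean value inequality, but this is an immaterial difference). Your explicit estimate $\sum_i y_i^2z_i^2\le|y|^2|z|^2$ for the bilinear operator norm is, if anything, slightly more careful than the paper's one-line appeal to Cauchy--Schwarz.
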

	\begin{proof}
		Let $x,y\in\R^d$. Since $\triplenorm{Df(x)}\le 1$, one has
		\begin{equation}
			|f(x)-f(y)|\le\|Df(x)\| |x-y|\le|x-y|.%\nonumber\\
			% &\le&\sum_{i=1}^{d}|x_i-u_i|^2=\|x-u\|_2^2.
		\end{equation}
		One deduces that
		\begin{equation}
			|N(x)-N(y)|\le (\lambda_{\max}(D)+\|W\|)|x-y|
		\end{equation}
		showing that $N$ is globally Lipschitz continuous from $\R^d$ into itself.  On one hand, one has obviously that $N\in C^2(\R^d; \R^d)$ since $f\in C^2(\R^d; \R^d)$ by assumption. It follows that $DN(x)h = -Dh+WDf(x)h$ is linear w.r.t. $h\in\R^d$ and satisfies
		\[
		|DN(x)h|\le (\lambda_{\max}(D)+\|W\|)|h|
		\]
		so that \eqref{eq:differential of N} is satisfied. On the other hand, for all $y,\, z\in\R^d$, one has
		\[
		D^2N(x)yz = WD^2f(x)yz.
		\]
		Since $f:\R^d\to\R^d$, $D^2f(x)$ is a third order tensor, one has $D^2f(x)yz\in\R^d$ for all $y,\, z\in\R^d$. Therefore, \eqref{eq:remark on the second derivative of f} is satisfied because of the ``diagonal'' structure of $f$, the second partial derivatives of $f_i$ are non-zero only when $j=k=i$. Therefore,
		\[
		|D^2f(x)yz|\le \|f''\|_\infty\|W\||y||z|
		\]
		by Cauchy-Schwarz inequality.
		% where $\|f''\|_\infty=\max_{i\in\llbracket 1,d\rrbracket}\|f_i''\|_\infty$.
	\end{proof} 
	
	The next result concerns the differential of the flow $\phi_t$ of the vector field $N$.  One has the following general estimates.
	\begin{lemma}\label{lem:general estimates flows}
		% {\color{blue!80!magenta}
			Let $\beta\in C^0(\R_+,\R^d)$ and $\gamma\in C^1(\R_+,\R)$ be such that $\gamma(0)\ge 0$. Then, for every $t\ge 0$, it holds
			\begin{equation}\label{eq:spectral norm of D_Phi complicated}
				\|D\phi_{\gamma(t)}(\beta(t))\|\le e^{-\gamma(0)\Gamma}e^{\Lambda\int_0^t|\dot{\gamma}(s)|\,ds},
			\end{equation}
			and
			% \begin{equation}
				%     \|D^2\phi_{\gamma(t)}(\beta(t))\|\le \Lambda_1\|\dot{\gamma}\|_\infty te^{-2\gamma(0)\Gamma}e^{2\Lambda\int_0^t|\dot{\gamma}(s)|\,ds}
				% \end{equation}
			\begin{IEEEeqnarray}{rCl}\label{eq:spectral norm of D^2_Phi}
				\|D^2\phi_{\gamma(t)}(\beta(t))\|&\le&\Lambda_1e^{\Lambda \int_0^t|\dot{\gamma}(\tau)|d\tau}\left[\gamma(0)e^{2\Lambda\gamma(0)}\right.\nonumber\\
				&&\left.+\|\dot{\gamma}\|_\infty te^{-2\gamma(0)\Gamma}e^{\Lambda \int_0^t|\dot{\gamma}(\tau)|d\tau}\right].
			\end{IEEEeqnarray}
			% where $\|\dot{\gamma}\|_\infty=\max_{0\le s\le t}|\dot{\gamma}(s)|$. 
			% }
	\end{lemma}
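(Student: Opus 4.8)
The plan is to reduce both bounds to pointwise estimates on the first and second differentials of the flow, $\|D\phi_\tau(z)\|$ and $\|D^2\phi_\tau(z)\|$, viewed as functions of the signed time $\tau\in\R$ and uniform in the base point $z\in\R^d$, and then to transfer the stated $t$-dependence via the one-parameter group identity $\phi_{\gamma(t)}=\phi_{\gamma(t)-\gamma(0)}\circ\phi_{\gamma(0)}$ together with the elementary bound $|\gamma(t)-\gamma(0)|\le\int_0^t|\dot\gamma(s)|\,ds$. Throughout I would use the differential bounds $\|DN\|\le\Lambda$, $\|D^2N\|\le\Lambda_1$ and $\triplenorm{Df}\le1$ from Lemma~\ref{lem:properties on N}.

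For \eqref{eq:spectral norm of D_Phi complicated}, first I would write the variational equation $\frac{d}{d\tau}D\phi_\tau(z)=DN(\phi_\tau(z))\,D\phi_\tau(z)$ with $D\phi_0(z)=\idty$. Fixing $v\in\R^d$ and setting $y(\tau)=D\phi_\tau(z)v$, I would differentiate $|y|^2$ and use $DN=-D+WDf$: since $D$ is symmetric positive definite and $\triplenorm{Df}\le1$, one gets $\langle y,DN(\phi_\tau(z))y\rangle\le-\Gamma|y|^2$, so $\frac{d}{d\tau}|y|^2\le-2\Gamma|y|^2$ and Gronwall yields $\|D\phi_\tau(z)\|\le e^{-\Gamma\tau}$ for $\tau\ge0$. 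The cruder estimate $\langle y,DN(\phi_\tau(z))y\rangle\le\Lambda|y|^2$ gives $\|D\phi_\tau(z)\|\le e^{\Lambda\tau}$ for $\tau\ge0$; applying the same computation to $\psi$ (whose field is $-N$, with identical norm bounds) gives $\|D\phi_\tau(z)\|\le e^{\Lambda|\tau|}$ for all $\tau\in\R$. I would then differentiate the group identity to obtain $D\phi_{\gamma(t)}(\beta(t))=D\phi_{\gamma(t)-\gamma(0)}(\phi_{\gamma(0)}(\beta(t)))\,D\phi_{\gamma(0)}(\beta(t))$, bound the $\gamma(0)$ factor by $e^{-\Gamma\gamma(0)}$ (legitimate since $\gamma(0)\ge0$) and the difference factor by $e^{\Lambda|\gamma(t)-\gamma(0)|}\le e^{\Lambda\int_0^t|\dot\gamma(s)|\,ds}$, which is exactly \eqref{eq:spectral norm of D_Phi complicated}.

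For \eqref{eq:spectral norm of D^2_Phi}, I would differentiate the variational equation once more to get the second variational equation $\frac{d}{d\tau}D^2\phi_\tau(z)=D^2N(\phi_\tau(z))\big(D\phi_\tau(z),D\phi_\tau(z)\big)+DN(\phi_\tau(z))\,D^2\phi_\tau(z)$ with $D^2\phi_0(z)=0$. Treating this as a linear inhomogeneous tensor ODE whose homogeneous transition map is $D\phi_{\tau-s}(\phi_s(z))$, the Duhamel formula gives $D^2\phi_\tau(z)=\int_0^\tau D\phi_{\tau-s}(\phi_s(z))\,D^2N(\phi_s(z))\big(D\phi_s(z),D\phi_s(z)\big)\,ds$. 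Taking multilinear operator norms, inserting $\|D^2N\|\le\Lambda_1$ and $\|D\phi\|\le e^{\Lambda|\cdot|}$, and bounding $e^{\Lambda(\tau-s)}e^{2\Lambda s}\le e^{2\Lambda\tau}$ for $s\in[0,\tau]$, yields $\|D^2\phi_\tau(z)\|\le\Lambda_1\tau e^{2\Lambda\tau}$ for $\tau\ge0$, extended to $\|D^2\phi_\tau(z)\|\le\Lambda_1|\tau|e^{2\Lambda|\tau|}$ for all $\tau$ via $\psi$. Finally I would apply the second-order chain rule to $\phi_{\gamma(t)}=g\circ h$ with $g=\phi_{\gamma(t)-\gamma(0)}$ and $h=\phi_{\gamma(0)}$, namely $D^2(g\circ h)(z)=D^2g(h(z))\big(Dh(z),Dh(z)\big)+Dg(h(z))\,D^2h(z)$; bounding the first summand by $\big(\Lambda_1\|\dot\gamma\|_\infty t\,e^{2\Lambda\int_0^t|\dot\gamma|}\big)e^{-2\Gamma\gamma(0)}$ (using $|\gamma(t)-\gamma(0)|\le\|\dot\gamma\|_\infty t$) and the second by $e^{\Lambda\int_0^t|\dot\gamma|}\,\Lambda_1\gamma(0)e^{2\Lambda\gamma(0)}$, then factoring out $\Lambda_1 e^{\Lambda\int_0^t|\dot\gamma|}$, reproduces \eqref{eq:spectral norm of D^2_Phi}.

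I expect the main obstacle to be the tensor bookkeeping in the second estimate: deriving the second variational equation and its Duhamel representation rigorously as identities between multilinear maps, and verifying that the deliberately crude bound $e^{\Lambda(\tau-s)}e^{2\Lambda s}\le e^{2\Lambda\tau}$ produces precisely the linear-in-$\tau$ prefactor, which is what ultimately yields the $\gamma(0)e^{2\Lambda\gamma(0)}$ and $\|\dot\gamma\|_\infty t$ terms. The group and chain-rule manipulations and the Gronwall step are routine by comparison, though one must be careful that $\gamma$ may change sign, so that the difference time $\gamma(t)-\gamma(0)$ must be handled with the two-sided bound $e^{\Lambda|\cdot|}$ rather than the contraction bound $e^{-\Gamma\cdot}$.
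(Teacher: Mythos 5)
Your proof is correct and reaches exactly the stated constants, but it follows a genuinely different route from the paper. The paper never invokes the semigroup splitting $\phi_{\gamma(t)}=\phi_{\gamma(t)-\gamma(0)}\circ\phi_{\gamma(0)}$: instead it differentiates \eqref{eq::nonlinear flow} with respect to the initial condition to obtain the integral identity
\begin{equation}
	D\phi_{\gamma(t)}(y)=D\phi_{\gamma(0)}(y)+\int_0^t\dot{\gamma}(s)\,DN(\phi_{\gamma(s)}(y))\,D\phi_{\gamma(s)}(y)\,ds,
\end{equation}
treats $\|D\phi_{\gamma(0)}(y)\|\le e^{-\gamma(0)\Gamma}$ (Lemma~\ref{lem:differential-Phi enhance}) as the ``initial datum,'' and applies Gronwall in the variable $t$ along the reparametrized path; the second estimate is obtained the same way from the twice-differentiated identity, with the term $\|D^2\phi_{\gamma(0)}(y)\|\le\Lambda_1\gamma(0)e^{2\Lambda\gamma(0)}$ recovered by specializing the general bound to $\gamma(t)=t$. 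You instead reduce everything to pointwise-in-$\tau$ bounds $\|D\phi_\tau\|\le e^{\Lambda|\tau|}$, $\|D\phi_\tau\|\le e^{-\Gamma\tau}$ ($\tau\ge0$) and $\|D^2\phi_\tau\|\le\Lambda_1|\tau|e^{2\Lambda|\tau|}$ (the latter via Duhamel for the second variational equation), and transfer the $t$-dependence through the first- and second-order chain rules together with $|\gamma(t)-\gamma(0)|\le\int_0^t|\dot\gamma|$ and $|\gamma(t)-\gamma(0)|\le\|\dot\gamma\|_\infty t$. Your version buys a cleaner separation between the flow-time estimates and the reparametrization, and makes the paper's continuity/$\limsup$ passage from a fixed base point $y$ to the moving point $\beta(t)$ unnecessary, since all your bounds are uniform in the base point; the paper's version is more self-contained in that it needs only one mechanism (Gronwall along $\gamma$) for both orders. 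The one point you rightly flag --- that the factor $\phi_{\gamma(t)-\gamma(0)}$ must be bounded with the two-sided $e^{\Lambda|\cdot|}$ estimate because $\gamma(t)-\gamma(0)$ may be negative, while only the factor $\phi_{\gamma(0)}$ with $\gamma(0)\ge0$ enjoys the dissipative bound $e^{-\Gamma\gamma(0)}$ --- is exactly where the asymmetry between $\Gamma$ and $\Lambda$ in \eqref{eq:spectral norm of D_Phi complicated} comes from, and your handling of it is consistent with the paper's.
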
 
	\begin{proof}
		% {\color{blue!80!magenta}
			Let $y\in\R^d$ and $\gamma\in C^1(\R_+,\R)$ be such that $\gamma(0)\ge 0$. By differentiating \eqref{eq::nonlinear flow} with respect to $y$, we find 
			\begin{equation*}
				D\phi_{\gamma(t)}(y)=D\phi_{\gamma(0)}(y)+\int_0^t\dot{\gamma}(s)DN(\phi_{\gamma(s)}(y))D\phi_{\gamma(s)}(y)\,ds.
			\end{equation*}
			It follows that
			\begin{IEEEeqnarray}{rCl}\label{eq:with gamma}
				\|D\phi_{\gamma(t)}(y)\|&\le&\|D\phi_{\gamma(0)}(y)\|+\Lambda\int_0^t|\dot{\gamma}(s)|\|D\phi_{\gamma(s)}(y)\|\,ds\nonumber\\
				% &\le&e^{\Lambda \int_0^t|\dot{\gamma}(s)|ds}\|D\phi_{\gamma(0)}(v)\|\nonumber\\
				&\le&e^{\Lambda \int_0^t|\dot{\gamma}(s)|\,ds}e^{-\gamma(0)\Gamma}
			\end{IEEEeqnarray}
			by \eqref{eq:differential-Phi enhance} and Gronwall's lemma. Let $\beta\in C^0(\R_+,\R^d)$, then for a fixed $t\ge 0$, the map $s\mapsto D\phi_{\gamma(t)}(\beta(s))$ is continuous by composition, and because of \eqref{eq:with gamma}, we find
			\begin{equation}
				\|D\phi_{\gamma(t)}(\beta(s))\|\le e^{\Lambda \int_0^t|\dot{\gamma}(s)|\,ds}e^{-\gamma(0)\Gamma},
			\end{equation}
			which proves \eqref{eq:spectral norm of D_Phi complicated} by continuity and taking the $\lim\sup$ as $s\to t$. By differentiating \eqref{eq::nonlinear flow} twice with respect to $y$, we find
			\begin{IEEEeqnarray*}{rCl}
				D^2\phi_{\gamma(t)}(y)&=&\int_0^t\dot{\gamma}(s) D^2N(\phi_{\gamma(s)}(y))D\phi_{\gamma(s)}(y)D\phi_{\gamma(s)}(y)\,ds\nonumber\\
				&&\hspace{-0.9cm}+D^2\phi_{\gamma(0)}(y)+\int_0^t\dot{\gamma}(s)DN(\phi_{\gamma(s)}(y))D^2\phi_{\gamma(s)}(y)\,ds,
			\end{IEEEeqnarray*}
			which yields
			\begin{IEEEeqnarray}{rCl}\label{eq:norm D^2_alpha_phi on R}
				\|D^2\phi_{\gamma(t)}(y)\|&\le&\Lambda _1e^{-2\gamma(0)\Gamma}e^{2\Lambda \int_0^t|\dot{\gamma}(\tau)|d\tau}\int_0^t|\dot{\gamma}(\tau)|\,d\tau\nonumber\\
				&&+e^{\Lambda \int_0^t|\dot{\gamma}(\tau)|d\tau}\|D^2\phi_{\gamma(0)}(y)\|
			\end{IEEEeqnarray}
			by \eqref{eq:differential second of N}, \eqref{eq:with gamma} and Gronwall's lemma. Applying \eqref{eq:norm D^2_alpha_phi on R} with $D^2\phi_t(y)$, i.e., $\gamma(t)=t$, $\gamma(0)=0$, and $D^2\phi_0(y)=0$ yields
			\begin{equation}
				\|D^2\phi_{\gamma(0)}(y)\|\le \Lambda _1\gamma(0)e^{2\Lambda\gamma(0)}.
			\end{equation}
			It follows that
			\begin{IEEEeqnarray}{rCl}
				\|D^2\phi_{\gamma(t)}(y)\|&\le&\Lambda_1e^{\Lambda \int_0^t|\dot{\gamma}(\tau)|d\tau}\left[\gamma(0)e^{2\Lambda\gamma(0)}\right.\nonumber\\
				&&\left.+\|\dot{\gamma}\|_\infty te^{-2\gamma(0)\Gamma}e^{\Lambda \int_0^t|\dot{\gamma}(\tau)|d\tau}\right].
			\end{IEEEeqnarray}
			One completes the proof of \eqref{eq:spectral norm of D^2_Phi} by arguing as previously. 
			% }
	\end{proof}
	
	It should be noted that when $\gamma(t)=t$, $\|D\phi_t(x)\|$ does not systematically grow with $t\ge 0$. One proves the following by arguing as in \eqref{eq:dissip proof} below.
	\begin{lemma}\label{lem:differential-Phi enhance} 
		For every $t\ge 0$, and all $x\in\R^d$, it holds
		\begin{equation}\label{eq:differential-Phi enhance}
			\begin{split}
				\|D\phi_t(x)\|&\le e^{-(\lambda_{\min}(D)-\|W\|)t}\\
				\|D\psi_t(x)\|&\ge e^{(\lambda_{\min}(D)-\|W\|)t}.
			\end{split}
		\end{equation}
		% where $x\in\R^d$.% and $y=\phi_t(x)$.
	\end{lemma}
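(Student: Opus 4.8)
The plan is to bound the operator norm of the variational matrix $M_t := D\phi_t(x)$ through a direction-wise energy estimate---precisely the dissipativity argument alluded to in the statement. Differentiating the flow equation \eqref{eq::nonlinear flow} with respect to the initial condition gives the linear variational equation $\dot{M}_t = DN(\phi_t(x))\,M_t$ with $M_0 = \idty$. Fixing an arbitrary direction $v\in\R^d$ and setting $y(t) = M_t v$, I would compute
\begin{equation}
\frac{d}{dt}|y(t)|^2 = 2\langle y(t),\, DN(\phi_t(x))\,y(t)\rangle,
\end{equation}
reducing the whole problem to a pointwise bound on the quadratic form induced by the Jacobian $DN$ along the trajectory.

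Next I would estimate that quadratic form using $DN(x) = -D + WDf(x)$ from Lemma~\ref{lem:properties on N}. Splitting it as $\langle y, -Dy\rangle + \langle y, WDf(\phi_t(x))y\rangle$, the first term is controlled by $\langle y, -Dy\rangle \le -\lambda_{\min}(D)|y|^2$ since $D$ is diagonal with positive entries, and the second by $\langle y, WDf(\phi_t(x))y\rangle \le \|WDf(\phi_t(x))\|\,|y|^2 \le \|W\|\,|y|^2$. The key point is that $Df(\cdot)$ is diagonal with entries bounded in modulus by $\|f_i'\|_\infty = 1$ (Assumption~\ref{ass:general assumption}), so $\|Df(\cdot)\|\le 1$. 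This yields the differential inequality $\frac{d}{dt}|y(t)|^2 \le -2(\lambda_{\min}(D)-\|W\|)|y(t)|^2$; Gronwall's lemma then gives $|M_t v| \le e^{-(\lambda_{\min}(D)-\|W\|)t}|v|$, and taking the supremum over unit vectors $v$ establishes the first inequality.

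For the lower bound on $\|D\psi_t(x)\|$, I would transfer the estimate through the inverse. By \eqref{inverse of the differential of the flow}, $[D\phi_t(x')]^{-1} = D\psi_t(\phi_t(x'))$, and since $\phi_t$ is a diffeomorphism of $\R^d$, every $x$ can be written as $x = \phi_t(x')$ with $x' = \psi_t(x)$, so that $D\psi_t(x) = [D\phi_t(\psi_t(x))]^{-1}$. Combining the elementary matrix inequality $\|M^{-1}\| \ge 1/\|M\|$ (which follows from $1 = \|M M^{-1}\| \le \|M\|\,\|M^{-1}\|$) with the first inequality applied at the base point $\psi_t(x)$ gives
\begin{equation}
\|D\psi_t(x)\| \ge \frac{1}{\|D\phi_t(\psi_t(x))\|} \ge e^{(\lambda_{\min}(D)-\|W\|)t},
\end{equation}
which is the second claim.

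I do not expect a genuine obstacle here: the argument is a standard logarithmic-norm/contraction estimate. The only points requiring care are the signs and directions of the inequalities---in particular lower-bounding $\|D\psi_t(x)\|$ rather than upper-bounding it---and ensuring the bound holds uniformly in $x\in\R^d$, which is guaranteed because the quadratic-form estimate is independent of the base point and the diffeomorphism property of $\phi_t$ lets every $x$ serve as an image point. The crux of the whole proof is the pointwise quadratic-form bound, which hinges on the diagonal structure of both $D$ and $Df$ together with the normalization $\|f_i'\|_\infty = 1$.
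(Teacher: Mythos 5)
Your proof is correct and follows essentially the same route the paper intends: the paper proves this lemma ``by arguing as in \eqref{eq:dissip proof}'', i.e., exactly your quadratic-form/Gronwall dissipativity estimate, differentiating $|D\phi_t(x)v|^2$ along the variational equation, splitting $DN=-D+WDf$, and using $\|Df\|\le 1$ with Cauchy--Schwarz. The derivation of the lower bound on $\|D\psi_t(x)\|$ from $\|M^{-1}\|\ge 1/\|M\|$ and \eqref{inverse of the differential of the flow} is the standard completion and is sound.
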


	\section{Proof of some of the results from Section~\ref{s:general notations}}\label{s:Supplementary results for representation of solutions}
	% We start with the following informative results.
	
	\subsection{Proof of Theorem~\ref{thm:backward representation}}\label{ss:proof of representation of solution}
	\begin{proof}  Under Assumptions~\ref{ass:general assumption}, \eqref{eq:Hopfield nonlinear} has a unique solution $x\in C^3([0, T];\R^d)$ by the Cauchy-Lipschitz theory (see, for instance, \cite[Chapter 2]{bressan2007introduction}). Let us prove that $x$ can be represented by \eqref{eq:backward representation}.
		Let $y:t\in[0, T]\mapsto y(t)\in\R^d$ be such that $x(t) = \phi_t(y(t))$. Then, $y(t)=\psi_t(x(t))$, and $y$ is derivable w.r.t. $t$. Taking the derivative of $x(t) = \phi_t(y(t))$, we find the following by using the chain rule formula, \eqref{eq:Hopfield nonlinear}, and \eqref{eq::nonlinear flow}
		\begin{equation}
			N(x(t))+Bu=\dot{x}(t)=N(x(t))+D\phi_t(y(t))\dot{y}(t).
		\end{equation}
		Using \eqref{inverse of the differential of the flow} and $x(t) = \phi_t(y(t))$, we find $ [D\phi_t(y(t))]^{-1}=D\psi_t(x(t))$.
		It follows that $y$ solves
		\begin{equation}\label{eq:equation satisfying y}
			\dot{y}(t)=D\psi_t(x(t))Bu,\qquad y(0)=x^0.
		\end{equation}
		Integrating \eqref{eq:equation satisfying y} over $[0, t]$ yields \eqref{eq:backward representation}. 
		
		Conversely, let us show that $x$ given by \eqref{eq:backward representation} solves \eqref{eq:Hopfield nonlinear} and belongs to $C^3([0, T];\R^d)$. First, $x(0)=\phi_0(x^0)=x^0$ and $x\in C^0([0, T]; \R^d)$ by composition.  Otherwise, there exists $(t_n)\subset [0, T]$, $t_{*}\in[0, T]$ with $t_n\to t_{*}$ and $\varepsilon>0$ such that $|\psi_{t}(x(t_n))-\psi_{t_{*}}(x(t_{*}))|\ge\varepsilon$. In fact, $x\in C^0([0, T]; \R^d)$ if and only if $t\mapsto\psi_{t}(x(t))$ belongs to $C^0([0, T]; \R^d)$ since $\psi_{t}$ is invertible and $C^3$ w.r.t. $t\in\R$. We have from \eqref{eq:backward representation} that
		\[
		\psi_{t_n}(x(t_n))-\psi_{t_{*}}(x(t_{*})) = \int_{t_*}^{t_n}D\psi_{s}(x(s))Bu\,ds
		\]
		which, using \eqref{eq:spectral norm of D_Phi complicated} with $\gamma(s)=-s$ and $\beta(s)=x(s)$, implies
		\begin{equation}
			|\psi_{t_n}(x(t_n))-\psi_{t_{*}}(x(t_{*}))|\le |Bu| e^{\Lambda_1(T-t_*)}|t_n-t_*|.
		\end{equation}
		It follows that $|\psi_{t_n}(x(t_n))-\psi_{t_{*}}(x(t_{*}))|\to 0$ as $n\to\infty$, which is inconsistent. Invoking Lebesgue derivation theorem under the integral sign, the r.h.s. of \eqref{eq:backward representation} belongs to $C^2([0, T]; \R^d)$ by composition and an iterative argument since\footnote{See, for instance, \cite[Chapter~15, Theorem~2]{hirsch1974differential}.} $\phi_t(\cdot)\in C^2(\R^d)$. Let
		\begin{equation}
			z(t)= x^0+\int_{0}^{t}D\psi_s(x(s))Bu\,ds
		\end{equation}
		so that $x(t) = \phi_t(z(t))$. Deriving \eqref{eq:backward representation} w.r.t. $t$ yields
		\begin{equation}\label{eq:derivation x}
			\dot{x}(t) = N(\phi_t(z(t)))+Bu=N(x(t))+Bu
		\end{equation}
		% \begin{IEEEeqnarray}{rCl}\label{eq:derivation x}
			%     \dot{x}(t)&=&\partial_t\phi_t(z(t))+D\phi_t(z(t))D\psi_t(x(t))Bu\nonumber\\
			%     &=&N(\phi_t(z(t)))+Bu=N(x(t))+Bu
			% \end{IEEEeqnarray}
		by $D\phi_t(z(t))D\psi_t(x(t))=\idty$. Since the r.h.s. of \eqref{eq:derivation x} defines a continuous function in $t$, one deduces that $x$ given by \eqref{eq:backward representation} belongs to $C^3([0, T]; \R^d)$ and solves \eqref{eq:Hopfield nonlinear}. 
		% This completes the proof of the theorem. 
	\end{proof}
	
	\section{Proof of some of the results from Section~\ref{ss:controllability nonlinear activation function}}\label{s:Supplementary results for Forward nominal-state synthesis}
	
	We start with the following informative results.
	
	% {\color{blue!80!magenta} 
		\begin{lemma}\label{lem:key}
			Let $(t, y)\in\R_+\times\R^d$, $\phi_t(y)=DN(\phi_t(y))$ and $V_t(y)=DN(\psi_t(y))$. Then, it holds
			\begin{equation}\label{eq:norm of exp t(V_t(y))}
				\|e^{tV_t(y)}\|\le e^{-t(\lambda_{\min}(D)-\|W\|)}.
			\end{equation}
			\begin{equation}\label{eq:norm-exp-phi}
				\|e^{t\phi_t(y)}\|\le e^{-t(\lambda_{\min}(D)-\|W\|)}.
			\end{equation}
		\end{lemma}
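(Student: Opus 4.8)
The plan is to observe that both matrices in the statement share the same structural form and then control the matrix exponential through a one-sided (dissipativity) estimate. First, I note the evident typo: the matrix in \eqref{eq:norm-exp-phi} should read $U_t(y):=DN(\phi_t(y))$, matching the notation $U_T(x^0)=DN(\phi_T(x^0))$ used throughout Section~\ref{ss:controllability nonlinear activation function}. By Lemma~\ref{lem:properties on N}, for any $z\in\R^d$ one has $DN(z)=-D+WDf(z)$ with $Df(z)=(f_i'(z_i)\delta_{i,j})$ diagonal, and under Assumption~\ref{ass:general assumption} ($\|f_i'\|_\infty=1$) this gives $\|Df(z)\|\le 1$. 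Hence both $V_t(y)=DN(\psi_t(y))$ and $U_t(y)=DN(\phi_t(y))$ are of the form $M=-D+WG$ with $G$ diagonal and $\|G\|\le 1$, so it suffices to bound $\|e^{tM}\|$ for such $M$. The key point to keep in mind is that, once $t$ and $y$ are fixed, the argument $z=\psi_t(y)$ (resp. $\phi_t(y)$) is a fixed point, so $e^{tM}$ is a genuine constant-matrix exponential.

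Second, I would establish the dissipativity inequality: for every $z\in\R^d$,
\begin{equation}
\langle z, Mz\rangle = -\langle z, Dz\rangle + \langle z, WGz\rangle \le \left(-\lambda_{\min}(D)+\|W\|\right)|z|^2,
\end{equation}
using $\langle z,Dz\rangle\ge\lambda_{\min}(D)|z|^2$ (since $D$ is positive-definite diagonal) together with $\langle z, WGz\rangle\le\|WG\|\,|z|^2\le\|W\|\|G\|\,|z|^2\le\|W\|\,|z|^2$ by Cauchy--Schwarz and submultiplicativity. Equivalently, the logarithmic norm satisfies $\mu_2(M)=\lambda_{\max}\!\big(\tfrac{M+M^\top}{2}\big)\le -(\lambda_{\min}(D)-\|W\|)$; this is the analogue of the computation labeled \eqref{eq:dissip proof} and of the dissipativity statement in Lemma~\ref{lem:dissipativity of A}.

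Third, I would propagate the bound to the exponential by a standard energy argument. Fixing $z_0\in\R^d$ and setting $w(s)=e^{sM}z_0$, one has $\tfrac{d}{ds}|w(s)|^2=2\langle w(s), Mw(s)\rangle\le 2\left(-\lambda_{\min}(D)+\|W\|\right)|w(s)|^2$, so Gronwall's lemma yields $|w(s)|\le e^{-s(\lambda_{\min}(D)-\|W\|)}|z_0|$. Taking the supremum over unit vectors $z_0$ at $s=t$ gives $\|e^{tM}\|\le e^{-t(\lambda_{\min}(D)-\|W\|)}$, which, applied to $M=V_t(y)$ and $M=U_t(y)$, proves both \eqref{eq:norm of exp t(V_t(y))} and \eqref{eq:norm-exp-phi}.

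I do not expect a genuine obstacle here: the argument is essentially routine. The only point requiring a little care is the one-sided bound on the quadratic form, which hinges on the diagonal structure of $Df$ (so that $\|Df\|\le\max_i\|f_i'\|_\infty=1$) and on $D$ being positive definite with smallest eigenvalue $\lambda_{\min}(D)$; once this is in place, the Gronwall step is immediate, and the same estimate covers both flows simultaneously precisely because the frozen Jacobians $V_t(y)$ and $U_t(y)$ share the common form $-D+WG$.
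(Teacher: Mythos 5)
Your proof is correct and follows essentially the same route as the paper: the one-sided (dissipativity) bound $\langle z,Mz\rangle\le(-\lambda_{\min}(D)+\|W\|)|z|^2$ for the frozen Jacobian $M=-D+WDf(\cdot)$, propagated to $\|e^{tM}\|$ by the energy/Gronwall argument, exactly as in the computation \eqref{eq:dissip proof}. The only difference is cosmetic: the paper first freezes the parameter $s$ in $V_s(y)$ and then passes to the limit $s\to t$ by continuity, a step you rightly observe is unnecessary since $V_t(y)$ is already a fixed matrix once $t$ and $y$ are given; you also correctly flag the typo $\phi_t(y)=DN(\phi_t(y))$ in the statement.
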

		\begin{proof}
			First, for a fixed $t\ge 0$ and $(x, y)\in(\R^d)^2$, the map $s\in\R_+\mapsto e^{tV_s(y)}x\in\R^d$ is continuous by composition. Now fix $s\ge 0$, let $x\in\R^d$, and define for $t\in\R_+$, $g(t)=|e^{tV_s(y)}x|^2$. Then, $g\in C^1(\R_+)$, $g(0)=|x|^2$ and 
			\begin{IEEEeqnarray}{rCl}\label{eq:dissip proof}
				\frac{\dot{g}(t)}{2}
				% &=&\left\langle V_s(y)e^{tV_s(y)}x, e^{tV_s(y)}x\right\rangle\nonumber\\
				&=&-\left\langle De^{tV_s(y)}x, e^{tV_s(y)}x\right\rangle\nonumber\\
				&&+\left\langle WDf(\psi_t(y))e^{tV_s(y)}x, e^{tV_s(y)}x\right\rangle\nonumber\\
				&\le&(-\lambda_{\min}(D)+\|W\|)g(t)
			\end{IEEEeqnarray}
			by Cauchy-Schwarz inequality since $\|Df(\psi_t(y))\|\le 1$. It follows that
			\begin{equation}
				|e^{tV_s(y)}x|^2\le e^{2t(-\lambda_{\min}(D)+\|W\|)}|x|^2.
			\end{equation}
			Taking the $\lim\sup$ as $s\to t$ completes the proof of \eqref{eq:norm of exp t(V_t(y))} by continuity. We prove \eqref{eq:norm-exp-phi} similarly.
		\end{proof}
		
		One also has the following result, where the proof follows the same lines as that of Lemma~\ref{lem:key}.
		\begin{lemma}\label{lem:dissipativity of A}
			Set $A=-D+W$. If $\|W\|\le\lambda_{\min}(D)$, then 
			\begin{equation}\label{eq:dissipativity of A 1}
				\|e^{tA}\|\le 1\qquad\forall t\ge 0.
			\end{equation}
			In particular, if $\|W\|<\lambda_{\min}(D)$, then
			\begin{equation}\label{eq:dissipativity of A 2}
				\|e^{tA}\|< 1\qquad\forall t> 0.
			\end{equation}
		\end{lemma}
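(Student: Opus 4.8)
The plan is to mirror the dissipativity argument of Lemma~\ref{lem:key} verbatim, replacing the time-varying generator $V_s(y)$ by the constant matrix $A=-D+W$. Fix $x\in\R^d$ and set $g(t)=|e^{tA}x|^2$ for $t\ge 0$. Then $g\in C^1(\R_+)$ with $g(0)=|x|^2$, and differentiating via the chain rule gives $\frac{1}{2}\dot{g}(t)=\langle A e^{tA}x,\,e^{tA}x\rangle$, which is the exact analogue of the first line of \eqref{eq:dissip proof}.

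Next I would split $A=-D+W$ and estimate each term separately. Since $D$ is diagonal (hence symmetric) and positive definite, $-\langle D e^{tA}x,\,e^{tA}x\rangle\le-\lambda_{\min}(D)\,g(t)$, while by Cauchy--Schwarz $\langle W e^{tA}x,\,e^{tA}x\rangle\le\|W\|\,g(t)$. Adding these yields the differential inequality $\dot{g}(t)\le 2\left(\|W\|-\lambda_{\min}(D)\right)g(t)$, exactly as in \eqref{eq:dissip proof} but with the constant generator in place of $V_s(y)$.

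Integrating this inequality by Gronwall's lemma then produces $|e^{tA}x|^2\le e^{2(\|W\|-\lambda_{\min}(D))t}|x|^2$, i.e. $|e^{tA}x|\le e^{(\|W\|-\lambda_{\min}(D))t}|x|$ for all $t\ge 0$ and all $x\in\R^d$. Taking the supremum over $|x|=1$ gives the operator-norm bound $\|e^{tA}\|\le e^{(\|W\|-\lambda_{\min}(D))t}$. When $\|W\|\le\lambda_{\min}(D)$ the exponent is nonpositive, so $\|e^{tA}\|\le 1$ and \eqref{eq:dissipativity of A 1} follows; when $\|W\|<\lambda_{\min}(D)$ the exponent is strictly negative for every $t>0$, so $\|e^{tA}\|<1$ and \eqref{eq:dissipativity of A 2} follows.

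I do not anticipate a genuine obstacle here, since the argument is a direct specialization of Lemma~\ref{lem:key} to a time-independent generator. The only points requiring minor care are the differentiability of $g$ (immediate, as $t\mapsto e^{tA}$ is smooth) and the passage from the pointwise estimate on $|e^{tA}x|$ to the spectral-norm bound via the supremum over the unit sphere; both steps are routine and are used implicitly in the proof of Lemma~\ref{lem:key}, so no $\limsup$ continuity argument is even needed because the generator is already constant in $t$.
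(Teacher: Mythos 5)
Your proof is correct and follows essentially the same route the paper intends: the paper gives no separate argument for Lemma~\ref{lem:dissipativity of A} but states that it "follows the same lines as that of Lemma~\ref{lem:key}," i.e.\ exactly the dissipativity computation $\tfrac12\dot g(t)=\langle Ae^{tA}x,e^{tA}x\rangle\le(\|W\|-\lambda_{\min}(D))g(t)$ followed by Gronwall that you carry out. Your observation that the $\limsup$ continuity step of Lemma~\ref{lem:key} is unnecessary here because the generator is constant is accurate.
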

		% }
	In the following lemma, we show that the series in Proposition~\ref{pro:smooth and Lipschitz activation function} are well-defined.
	\begin{lemma}\label{lem:well-defined xi and chi}
		% {\color{blue!80!magenta}
			The series \eqref{eq:sol expansion with remainder}, \eqref{eq:xi}, and \eqref{eq:chi} in Proposition~\ref{pro:smooth and Lipschitz activation function} are well-defined in $\cM_d(\R)$. Moreover, there exists $C=C(t,x^0,D,W,f,B,u)>0$ such that
			\begin{equation}\label{eq:norm zeta}
				\|\zeta_u(t)\|\le C e^{3\Lambda  t}t^3,\qquad\forall t\ge 0.
			\end{equation}
			% Here $\Lambda=\lambda_{\max}(D)+\|W\|$.
			% }
	\end{lemma}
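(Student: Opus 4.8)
The plan is to bound each of the three matrix series in Proposition~\ref{pro:smooth and Lipschitz activation function} termwise, using the uniform estimates from Lemmas~\ref{lem:properties on N} and~\ref{lem:general estimates flows}, and then extract the claimed powers of $t$ and the exponential factor by summing the resulting numerical series. Throughout, $x(\cdot)\in C^3([0,T];\R^d)$ is the fixed solution, so every trajectory-dependent quantity below is continuous in $s$ (hence each integrand is a well-defined, continuous matrix-valued function) and bounded on the compact interval $[0,t]$; these trajectory bounds are what the constant $C=C(t,x^0,D,W,f,B,u)$ absorbs. First I would record the basic estimates. Since $Z_s=DN(\psi_s(x(s)))$, Lemma~\ref{lem:properties on N} gives $\|Z_s\|\le\Lambda$. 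Writing $\psi_s=\phi_{-s}$ and applying Lemma~\ref{lem:general estimates flows} with $\gamma(s)=-s$ and $\beta(s)=x(s)$ (so $\gamma(0)=0$ and $\int_0^s|\dot\gamma|=s$), estimates~\eqref{eq:spectral norm of D_Phi complicated} and~\eqref{eq:spectral norm of D^2_Phi} yield $\|P_s\|=\|D\psi_s(x(s))\|\le e^{\Lambda s}$ and $\|D^2\psi_s(x(s))\|\le \Lambda_1 s\,e^{2\Lambda s}$. Finally, set $R=\sup_{s\in[0,t]}|\psi_s(x(s))|$ and $S=\sup_{s\in[0,t]}|\dot x(s)|$, both finite.

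With these in hand, the main series and $\chi_u$ are routine. The general term of the series $\sum_n \frac{t^{n+1}Z_t^n}{(n+1)!}P_t$ is dominated in norm by $\frac{t^{n+1}\Lambda^n}{(n+1)!}e^{\Lambda t}$, whose sum is $\frac{1}{\Lambda}(e^{\Lambda t}-1)e^{\Lambda t}$, establishing absolute convergence in the complete space $\cM_d(\R)$. For $\chi_u$, I would bound the $n$-th integrand using $\|Z_s^{n-1}\|\le\Lambda^{n-1}$, the $D^2\psi$ estimate, and $|\dot x(s)|\le S$, then integrate $\int_0^t s^{n+1}\,ds$ and sum $\sum_{n\ge1}\frac{\Lambda^{n-1}t^{n+2}}{n!}=\frac{t^2}{\Lambda}(e^{\Lambda t}-1)\le t^3 e^{\Lambda t}$, which gives $\|\chi_u(t)\|\le \Lambda_1 S\,t^3 e^{3\Lambda t}$.

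The main obstacle is the $\xi_u$ term, since it involves differentiating the matrix power $Z_s^n$ whose factors need not commute. I would use the Leibniz expansion $\frac{d}{ds}Z_s^n=\sum_{j=0}^{n-1}Z_s^j\,\dot Z_s\, Z_s^{\,n-1-j}$ to obtain $\|\frac{d}{ds}Z_s^n\|\le n\Lambda^{n-1}\|\dot Z_s\|$. The delicate point is controlling $\dot Z_s=D^2N(\psi_s(x(s)))\big[\tfrac{d}{ds}\psi_s(x(s))\big]$: the total derivative of $s\mapsto\psi_s(x(s))$ carries both the explicit flow dependence and the implicit dependence through $x(s)$, and by~\eqref{eq::nonlinear flow psi} and the chain rule it equals $-N(\psi_s(x(s)))+P_s\dot x(s)$. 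Using $N(0)=0$, the global Lipschitz bound $|N(y)|\le\Lambda|y|$ from Lemma~\ref{lem:properties on N}, and $\|D^2N\|\le\Lambda_1$ from~\eqref{eq:differential second of N}, this yields $\|\dot Z_s\|\le \Lambda_1(\Lambda R+e^{\Lambda t}S)=:M_1$. Substituting, integrating $\int_0^t s^{n+1}\,ds$, and summing $\sum_{n\ge1}\frac{n\Lambda^{n-1}t^{n+2}}{(n+1)!\,(n+2)}\le\frac{t}{\Lambda^2}(e^{\Lambda t}-1-\Lambda t)\le\frac{t^3}{2}e^{\Lambda t}$ gives $\|\xi_u(t)\|\le\frac{M_1}{2}t^3 e^{2\Lambda t}$.

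Combining the two estimates, $\|\zeta_u(t)\|\le\|\xi_u(t)\|+\|\chi_u(t)\|\le C\,t^3 e^{3\Lambda t}$, with $C$ collecting $\Lambda$, $\Lambda_1$, $R$ and $S$, all of which depend only on $t,x^0,D,W,f,B,u$. The same dominating bounds justify both the well-definedness of the three series in $\cM_d(\R)$ (absolute convergence in a finite-dimensional, hence complete, space) and the interchange of summation and integration, via uniform convergence of the partial sums on $[0,t]$. The only genuine bookkeeping risk is matching the factorial and power indices so that exactly $t^3$ survives after the integration; I expect the $\xi_u$ derivative computation to be where an error would most plausibly creep in.
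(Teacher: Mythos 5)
Your proof is correct and follows essentially the same route as the paper: termwise bounds on each series using the Leibniz expansion of $\tfrac{d}{ds}Z_s^n$ together with the estimates of Lemmas~\ref{lem:properties on N} and~\ref{lem:general estimates flows}, then summation of the resulting exponential series to extract the $t^3e^{3\Lambda t}$ factor. The only (harmless) deviation is in bounding $\dot Z_s$ and $|\dot x(s)|$: the paper uses the identity $\tfrac{d}{ds}\psi_s(x(s))=P_sBu$ (so $\dot Z_s=WD^2f(\psi_s(x(s)))P_sBu$, giving $\|\dot Z_s\|\le\Lambda_1 e^{\Lambda s}|Bu|$) and controls $|\dot x(s)|$ uniformly in $s$ by a Gronwall argument, whereas you keep the unsimplified chain-rule expression $-N(\psi_s(x(s)))+P_s\dot x(s)$ and absorb $\sup_{[0,t]}|\psi_s(x(s))|$ and $\sup_{[0,t]}|\dot x(s)|$ into the constant, which the statement permits since $C$ may depend on $t$.
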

	\begin{proof}
			Let us prove that
			\begin{equation}\label{eq:norm xi}
				\|\xi_u(t)\|\le \Lambda\Lambda _1|Bu|e^{3\Lambda t}t^3
			\end{equation}
			\begin{equation}\label{eq:norm chi}
				\|\chi_u(t)\|\le \Lambda_1\max\left(\Lambda|x^0|+|Bu|,\frac{\|W\|}{\lambda_{\min}(D)}\right)e^{3\Lambda  t}t^3
			\end{equation}
			where $\xi_u(t)$ and $\chi_u(t)$ are defined in \eqref{eq:xi} and \eqref{eq:chi}.% and $\|f''\|_\infty=\max_{i\in\llbracket 1,d\rrbracket}\|f_i''\|_\infty$ and $\Lambda_1=\|W\|\|f''\|_\infty$.
			
			First of all, for every $n\in\N$, one has $1/(n+1)\le 1$ so that
			\begin{equation}
				\left\|\sum_{n=0}^{\infty}\frac{t^{n+1}Z_t^n}{(n+1)!}\right\|\le t\sum_{n=0}^{\infty}\frac{t^{n}\Lambda^n}{n!}=te^{\Lambda t}\qquad\forall t\ge 0
			\end{equation}
			since $\|Z_t^n\|\le\Lambda^n$ by Lemma~\ref{lem:properties on N}. Next, for every $n\ge 1$,
			\begin{equation}\label{eq:for Z}
				\frac{d}{ds}Z_s^n=\sum_{k=1}^{n}Z_s^{k-1}\dot{Z}_sZ_s^{n-k},\quad\left\|\frac{d}{ds}Z_s^n\right\|\le n\Lambda^{n-1}\|\dot{Z}_s\|.
			\end{equation}
			% so that it holds
			% \begin{equation}\label{eq:for Z 1}
				%     \left\|\frac{d}{ds}Z_s^n\right\|\le n\|Z_s\|^{n-1}\|\dot{Z}_s\|\le n\Lambda^{n-1}\|\dot{Z}_s\|.
				% \end{equation}
			Furthermore, $\dot{Z}_s=WD^2f(\psi_s(x(s)))P_sBu$, and, one finds
			% it holds
			\begin{equation}
				|\xi(t)y|\le \Lambda_1|Bu|t^3e^{3\Lambda t}|y|
			\end{equation}
			%     \begin{IEEEeqnarray*}{rCl}
				% |\xi(t)y|&\le&\Lambda_1|Bu||y|\sum_{n=1}^{\infty}\int_{0}^{t}\frac{s^{n+1}}{(n+1)!}n\Lambda^{n-1}e^{2\Lambda s}\,ds\nonumber\\
				%         &\le&\Lambda_1|Bu|t^3e^{3\Lambda t}|y|
				%     \end{IEEEeqnarray*}
			by applying \eqref{eq:spectral norm of D_Phi complicated} with $\gamma(s)=-s$ and $\beta(s)=x(s)$. This completes the proof of \eqref{eq:norm xi}. Let us prove \eqref{eq:norm chi}. Firstly, one has $\ddot{x}(s)=DN(x(s))\dot{x}(s)$, $\dot{x}(0)=N(x^0)+Bu$ so that $|\dot{x}(0)|\le\Lambda|x^0|+|Bu|$ since $N(0)=0$ and $N$ in $\Lambda$-Lipschitz by Lemma~\ref{lem:properties on N}. Therefore, by the Cauchy-Schwarz inequality
			\begin{equation*}
				\frac{d}{ds}|\dot{x}(s)|=\frac{\langle \ddot{x}(s),\dot{x}(s)\rangle}{|\dot{x}(s)|}\le-\lambda_{\min}(D)|\dot{x}(s)|+\|W\|
			\end{equation*}
			which, by Gronwall's lemma, implies that
			\begin{equation}\label{eq:nonme x_dot(s)}
				|\dot{x}(s)|\le\max\left(\Lambda|x^0|+|Bu|,\frac{\|W\|}{\lambda_{\min}(D)}\right),\quad\forall s\ge 0.
			\end{equation}
			On the other hand, by applying \eqref{eq:spectral norm of D^2_Phi} with $\gamma(s)=-s$ and $\beta(s)=x(s)$, one gets,
			\begin{equation}\label{eq:spectral norm of second derivative D_Psi^2}
				\|D\psi_s(x(s))\dot{x}(s)\|\le \Lambda _1s e^{2\Lambda s}|\dot{x}(s)|.
			\end{equation}
			One deduces \eqref{eq:norm chi} from \eqref{eq:chi}, \eqref{eq:nonme x_dot(s)} and \eqref{eq:spectral norm of second derivative D_Psi^2}.
	\end{proof}
	
	\subsection{Proof of Proposition~\ref{pro:smooth and Lipschitz activation function}}\label{s:smooth and Lipschitz activation function}
	
	\begin{proof}
		Define for every $n\in\N$, $\eta:t\mapsto\eta(t)=Z_t^nP_t$. Let us show that $\eta$ is absolutely continuous on $[0, T]$ with value in $\cM_d(\R)$. First, one has $\eta(\cdot)\in L^1((0,T); \cM_d(\R))$. In fact,  by applying \eqref{eq:spectral norm of D_Phi complicated} with $\gamma(t)=-t$ and $\beta(t)=x(t)$, one finds
		\begin{equation}
			\int_0^T\|\eta(t)\|\,dt\le\Lambda^n\int_0^Te^{\Lambda t}\,dt=\Lambda^{n-1}(e^{\Lambda T}-1)
		\end{equation}
		where $\Lambda=\lambda_{\max}(D)+\|W\|$. Moreover, $\eta$ is derivable w.r.t. $t$ and $ \dot{\eta}(t)=-Z_tP_t+D^2\psi_t(x(t))\dot{x}(t)$ if $n=0$, while for $n\ge 1$, one has
		\begin{IEEEeqnarray*}{rCl}
			\dot{\eta}(t)&=&\left(\frac{d}{dt}Z_t^n\right)P_t+Z_t^n\dot{P}_t\nonumber\\
			&=&\sum_{k=1}^{n}Z_t^{k-1}\dot{Z}_tZ_t^{n-k}P_t-Z_t^{n}(Z_tP_t-D^2\psi_t(x(t))\dot{x}(t)).
		\end{IEEEeqnarray*}
		By using \eqref{eq:nonme x_dot(s)}, \eqref{eq:spectral norm of second derivative D_Psi^2} and applying \eqref{eq:spectral norm of D_Phi complicated} and \eqref{eq:spectral norm of D^2_Phi} with $\gamma(t)=-t$ and $\beta(t)=x(t)$, one finds for $n=0$,
		%   \begin{equation}
			%      \int_0^T\|\dot{\eta}(t)\|\,dt\le Te^{\Lambda T}\left(\Lambda+\Lambda_1\max\left(\Lambda|x^0|+|Bu|,\frac{\|W\|}{\lambda_{\min}(D)}\right)e^{\Lambda  T}\right)
			% \end{equation}
		\begin{IEEEeqnarray*}{rCl}
			\int_0^T\|\dot{\eta}(t)\|\,dt&\le&
			T\Lambda e^{\Lambda T}\nonumber\\
			&&\hspace{-0.5cm}+T^2\Lambda_1e^{2\Lambda T}\max\left(\Lambda|x^0|+|Bu|,\frac{\|W\|}{\lambda_{\min}(D)}\right).
		\end{IEEEeqnarray*}
		If $n\ge 1$, one uses \eqref{eq:differential second of N} to obtain
		$\dot{Z}_t=WD^2f(\psi_t(x(t)))P_tBu$ and $\|\dot{Z}_t\|\le\Lambda_1e^{\Lambda t}|Bu|$. It follows that
		\begin{IEEEeqnarray*}{rCl}
			\int_0^T\|\dot{\eta}(t)\|\,dt&\le&nT\Lambda^{n-1}\Lambda_1 e^{2\Lambda T}|Bu|+
			T\Lambda^{n+1} e^{\Lambda T}\nonumber\\
			&&\hspace{-1cm}+T^2\Lambda^n\Lambda_1e^{2\Lambda T}\max\left(\Lambda|x^0|+|Bu|,\frac{\|W\|}{\lambda_{\min}(D)}\right).
		\end{IEEEeqnarray*}
		% where $\Lambda_1=\|W\|\|f''\|_\infty$ with $\|f''\|_\infty=\max_{i\in\llbracket 1,d\rrbracket}\|f_i''\|_\infty$. 
		It follows that $\dot{\eta}(\cdot)\in L^1((0,T); \cM_d(\R))$, and that $\eta(\cdot)$ is absolutely continuous on $[0, T]$, which justifies the following integration by parts
		% integration by parts we will perform. Via a double integration by parts, one gets from Theorem~\ref{thm:backward representation} that
		\begin{IEEEeqnarray}{rCl}\label{eq:integration by parts 1}
			\psi_t(x(t)) &=& x^0+\int_{0}^{t}s'P_sBu\,ds=-\int_{0}^{t}\frac{s^2}{2}\left[\frac{d}{ds}Z_s\right]P_sBu\,ds\nonumber\\
			&&\hspace{-2cm}-\int_{0}^{t}sD^2\psi_s(x(s))\dot{x}(s)Bu\,ds -\int_{0}^{t}\frac{s^2}{2}Z_sD^2\psi_s(x(s))\dot{x}(s)Bu\,ds\nonumber\\
			&&\hspace{-1.5cm} +x^0+tP_tBu+\frac{t^2}{2}Z_tP_tBu+\int_{0}^{t}\frac{s^2}{2}Z_s^2P_sBu\,ds
		\end{IEEEeqnarray}
		from which one performs integration by parts on the last integral in \eqref{eq:integration by parts 1} and so on to obtain \eqref{eq:sol expansion with remainder}, \eqref{eq:xi}, and \eqref{eq:chi}, which are well-defined by Lemma~\ref{lem:well-defined xi and chi}.
	\end{proof}

	\subsection{Proof of Proposition~\ref{pro:control synthesis in small time 1}}\label{ss:spectral norm of varphi_u(T)}
	
	\begin{proof} Let us introduce %$\Lambda=\lambda_{\max}(D)+\|W\|$, $\Gamma=\lambda_{\min}(D)-\|W\|$, and 
		\begin{equation}
			v(t) = \int_0^tD\phi_{T-s}(x(s))Bu\,ds,\qquad t\in[0, T]
		\end{equation}
		where $x(\cdot)$ is the solution of \eqref{eq:Hopfield nonlinear} corresponding to $u\in\R^k$. Applying \eqref{eq:spectral norm of D_Phi complicated} with $\gamma(t)=T-t$ and $\beta(t)=x(t)$, one finds
		\begin{equation}
			|v(t)|\le t|Bu|e^{(\Lambda-\Gamma)t},\qquad t\in[0, T]
		\end{equation}
		showing that $v(t)\underset{t \sim 0}{=}\cO(t)$. Thus, one deduces from \eqref{eq:forward representation} that
		\begin{equation}
			x(t)\underset{t \sim 0}{=}\phi_t(x^0)+\cO(t).
		\end{equation}
		It follows that 
		\begin{IEEEeqnarray*}{rCl}
			Q_{T-t}:= D\phi_{T-t}(x(t))&\underset{t \sim 0}{=}&D\phi_{T-t}(\phi_t(x^0))+\cO(t),\\   
			Z_{T-t}:= DN(\phi_{T-t}(x(t))&\underset{t \sim 0}{=}&A+\cO(t),
		\end{IEEEeqnarray*}
		\begin{equation}
			D^2\phi_{T-t}(x(t))\dot{x}(t)\underset{t \sim 0}{=}D^2\phi_{T-t}(\phi_t(x^0))N(\phi_t(x^0))+\cO(t),
		\end{equation}
		where $A:=DN(\phi_T(x^0))$. Using \eqref{eq:kappa_u} and \eqref{eq:eta_u}, one finds
		\begin{equation}\label{eq:kappa_u and eta approx}
			\kappa_u(T)\underset{T \sim 0}{=}\cO(T^4),\qquad
			\eta_u(T) \underset{T \sim 0}{=} \eta(T)+\cO(T^4),
		\end{equation}
		% where
		\begin{equation}
			\eta(T):=\sum_{n=1}^\infty \int_0^T \frac{(t - T)^nA^{n - 1}}{n!} D^2\phi_{T - t}(\phi_t(x^0)) N(\phi_t(x^0))\, dt.
		\end{equation}
		Since $DN(\phi_T(x^0))$ is invertible by assumption, it follows from \eqref{eq:link between the SAs abstract} that
		\begin{IEEEeqnarray}{rCl}\label{eq:eta_u general}
			\eta(T)
			% &=&\sum_{n=1}^\infty \int_0^T \frac{[(t - T)A]^n}{n!} D\phi_{T - t}(\phi_t(x^0)) \, dt-\nonumber\\
			% &&\sum_{n=1}^\infty \int_0^T \frac{[(t - T)A]^n}{n!} A^{-1}D\phi_{T - t}(\phi_t(x^0)) DN(\phi_t(x^0))\, dt\nonumber\\
			&=&\int_0^T\left[e^{(t-T)A}-\idty\right]D\phi_{T - t}(\phi_t(x^0)) \, dt-\nonumber\\
			&&\hspace{-1.8cm}\int_0^T\left[e^{(t-T)A}-\idty\right]A^{-1}D\phi_{T - t}(\phi_t(x^0)) DN(\phi_t(x^0))\, dt.
		\end{IEEEeqnarray}
		% Set
		\[
		\hspace{-1cm}\text{Set}\quad\Theta_1:= \int_0^T\left[e^{(t-T)A}-\idty\right]D\phi_{T - t}(\phi_t(x^0)) \, dt.
		\]
		Using integration by parts, one finds
		\begin{IEEEeqnarray}{rCl}\label{eq:Theta 1}
			\Theta_1
			% &=& \int_0^T\left(\frac{d}{dt}e^{(t-T)A}\right)A^{-1}D\phi_{T - t}(\phi_t(x^0)) \, dt\nonumber\\
			% &&-\int_0^TD\phi_{T - t}(\phi_t(x^0)) \, dt\nonumber\\
			&=&A^{-1}-e^{-TA}A^{-1}D\phi_T(x^0)-\int_0^TD\phi_{T - t}(\phi_t(x^0)) \, dt\nonumber\\
			&&\hspace{-0.3cm}+\int_0^Te^{(t-T)A}A^{-1}D\phi_{T - t}(\phi_t(x^0)) DN(\phi_t(x^0))\, dt.
		\end{IEEEeqnarray}
		since (see, for instance, the proof of Lemma~\ref{lem:on the SC})
		\[
		\frac{d}{dt}D\phi_{T - t}(\phi_t(x^0))=-D\phi_{T - t}(\phi_t(x^0))DN(\phi_t(x^0)).
		\]
		It follows from \eqref{eq:eta_u general} and \eqref{eq:Theta 1} that
		\begin{equation}\label{eq:eta_u reduced}
			\eta(T)=\left(\idty-e^{-TA}\right)A^{-1}D\phi_T(x^0)-\int_0^T\hspace{-0.2cm}D\phi_{T - t}(\phi_t(x^0)) \, dt.
		\end{equation}
		Now, via successive integration by parts, one finds
		\begin{IEEEeqnarray}{rCl}
			\int_0^TD\phi_{T - t}(\phi_t(x^0)) \, dt&=&\int_0^Tt'D\phi_{T - t}(\phi_t(x^0)) \, dt \nonumber\\
			&=&T+\frac{T^2}{2}A+\frac{T^3}{6}A^2\nonumber\\
			&&\hspace{-2.9cm}-\int_0^T\frac{t^2}{2}D\phi_{T - t}(\phi_t(x^0))\left[\frac{d}{dt}DN(\phi_t(x^0))\right]\,dt\nonumber\\
			&&\hspace{-2.9cm}+\int_0^T\left(\frac{t^4}{4!}\right)'D\phi_{T - t}(\phi_t(x^0))DN(\phi_t(x^0))^3 \, dt
		\end{IEEEeqnarray}
		and so on to obtain
		\begin{IEEEeqnarray}{rCl}\label{eq:inter}
			\int_0^TD\phi_{T - t}(\phi_t(x^0)) \, dt&=&\left(e^{TA}-\idty\right)A^{-1}-\nonumber\\
			&&\hspace{-4.2cm}\underbrace{\sum_{n=1}^\infty\int_0^T\frac{t^{n+1}D\phi_{T - t}(\phi_t(x^0))}{(n+1)!}\left[\frac{d}{dt}DN(\phi_t(x^0))^n\right]\,dt}_{\Lambda_T}.
		\end{IEEEeqnarray}
		Letting $Z_t:=DN(\phi_t(x^0))$, one finds
		\begin{equation}\label{eq:for Z inter}
			\frac{d}{dt}Z_t^n=\sum_{k=1}^{n}Z_t^{k-1}\dot{Z}_tZ_t^{n-k},\quad \left\|\frac{d}{dt}Z_t^n\right\|\le n\Lambda^{n-1}\|\dot{Z}_t\|.
		\end{equation}
		%    so that it holds
		% \begin{equation}\label{eq:for Z inter 1}
			%        \left\|\frac{d}{dt}Z_t^n\right\|\le n\|Z_t\|^{n-1}\|\dot{Z}_t\|\le n\Lambda^{n-1}\|\dot{Z}_t\|.
			%    \end{equation}
		Furthermore, $\dot{Z}_t=WD^2f(\phi_t(x^0))N(\phi_t(x^0))$. Therefore,
		\begin{equation}\label{eq:Lambda_T}
			\|\Lambda_T\|\le\Lambda_1 e^{2\Lambda T}|x^0|T^3.
		\end{equation}
		%    where $\|f''\|_\infty=\max_{i\in\llbracket 1,d\rrbracket}\|f_i''\|_\infty$ and $\Lambda_1=\|W\|\|f''\|_\infty$.
		Combining \eqref{eq:Lambda_T}, \eqref{eq:inter} and \eqref{eq:eta_u reduced}, we find
		% \begin{equation}\label{eq:eta}
			%     \eta(T) \underset{T \sim 0}{=}\left(e^{TA}-\idty\right)A^{-1}\left(e^{-TA}D\phi_T(x^0)-\idty\right)+\cO(T^3)
			% \end{equation}
		\begin{IEEEeqnarray}{rCl}\label{eq:eta}
			\eta(T) &\underset{T \sim 0}{=}&  \left(\idty-e^{-TA}\right)A^{-1}D\phi_T(x^0)-\left(e^{TA}-\idty\right)A^{-1}\nonumber\\
			&&+\cO(T^3).
		\end{IEEEeqnarray}
		Combining \eqref{eq:kappa_u and eta approx} and \eqref{eq:eta} yields
		\begin{IEEEeqnarray*}{rCl}
			\varphi_u(T) &\underset{T \sim 0}{=}& \left(\idty-e^{-TA}\right)A^{-1}D\phi_T(x^0)-\left(e^{TA}-\idty\right)A^{-1}\nonumber\\
			&&\hspace{0.3cm}+\cO(T^3),
		\end{IEEEeqnarray*}
		which complete the proof of \eqref{eq:expansion of varphi_u} since $A:=DN(\phi_T(x^0))$. To complete the proof of \eqref{eq:A_T and varphi_u}, one uses \eqref{eq:matrix A_T(x^0)} and \eqref{eq:expansion of varphi_u}.
	\end{proof}
	
	\subsection{Proof of Theorem~\ref{thm:forward implicit nominal-state synthesis step function}}\label{ss:proof of forward implicit nominal-state synthesis step function}
	\begin{proof}
		Let $T\ge\tau$ where $\tau>0$ be such that the assumption in Theorem~\ref{thm:forward implicit nominal-state synthesis step function} is satisfied, and let $u_\tau\in\R^k$. Then, using \eqref{eq:forward representation}, the solution $x(\cdot)$ to~\eqref{eq:Hopfield nonlinear} corresponding to %the step function $u:[0,T]\to\R^k$, 
		\[
		u(t)=\begin{cases}
			0&\quad\mbox{if}\quad 0\le t\le T-\tau\\
			u_\tau&\quad\mbox{if}\quad T-\tau< t\le T
		\end{cases}
		\]
		reads at time $t=T$ as (see, for instance, Remark~\ref{rmk:on piecewise control})
		\[
		x(T) = \phi_T(x^0)+\int_{T-\tau}^TD\phi_{T-t}(x(t))Bu_\tau\,dt.
		\]
		Arguing as in the proof of Proposition~\ref{pro:smooth and Lipschitz activation function 1}, one expands
		\begin{IEEEeqnarray}{rCl}
			x(T)&=&-\sum_{n=0}^{\infty}\frac{(-\tau)^{n+1}DN(\phi_T(x^0))^{n}}{(n+1)!}D\phi_\tau(\phi_{T-\tau}(x^0))Bu_\tau\nonumber\\
			&&+\phi_T(x^0)-\varphi_u(\tau, T)Bu_\tau
		\end{IEEEeqnarray}
		where $\varphi_{u_\tau}(\tau,T) = \kappa_{u_\tau}(\tau,T) + \eta_{u_\tau}(\tau,T)$, with $\kappa_{u_\tau}(\tau,T)$ and $\eta_{u_\tau}(\tau,T)$ defined in \eqref{eq:kappa_u} and \eqref{eq:eta_u}, respectively, where the integrals are taken over $[T - \tau, T]$.
		
		If $x(T)=x^1$, one argues as in the proof of Theorem~\ref{thm:Forward nominal-state synthesis} to obtain that $u_\tau$ necessarily solves~\eqref{eq:forward implicit nominal-state synthesis step function}, and conversely.
	\end{proof}

	The following results were useful.
	
	\begin{lemma}\label{lem:on the SC}
		Let $x\in\R^d$, and let $T > 0$, $t \in [0, T]$. Then,
		\begin{IEEEeqnarray}{rCl}\label{eq:link between the SAs abstract}
			DN(\phi_T(x))\, D\phi_{T-t}(\phi_t(x)) 
			&=& D\phi_{T - t}(\phi_t(x)) \, DN(\phi_t(x))\nonumber\\
			&&\hspace{-0.8cm}+ D^2\phi_{T - t}(\phi_t(x)) \, N(\phi_t(x)).
		\end{IEEEeqnarray}
		% \begin{IEEEeqnarray}{rCl}\label{eq:link between the SAs abstract prime}
			%     DN(\phi_T(x))\, D\phi_{t}(\phi_{T-t}(x)) 
			% &=& D\phi_{t}(\phi_{T-t}(x)) \, DN(\phi_{T-t}(x))\nonumber\\
			% &&\hspace{-1.2cm}+ D^2\phi_{t}(\phi_{T-t}(x)) \, N(\phi_{T-t}(x)).
			% \end{IEEEeqnarray}
		In particular, when $t = 0$, we obtain
		\begin{IEEEeqnarray}{rCl}\label{eq:link between the SAs}
			DN(\phi_T(x)) 
			&=& D\phi_T(x) \, DN(x) \, D\phi_T(x)^{-1}\nonumber\\ 
			&&+ D^2\phi_T(x) \, N(x) \, D\phi_T(x)^{-1}.
		\end{IEEEeqnarray}
	\end{lemma}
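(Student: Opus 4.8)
The plan is to prove the operator identity \eqref{eq:link between the SAs abstract} by computing the $t$-derivative of the matrix-valued map $G(t):=D\phi_{T-t}(\phi_t(x))$ in two different ways and equating the results; the special case \eqref{eq:link between the SAs} then follows by setting $t=0$. Throughout I rely on two standard facts about the flow $\phi_t$ of $N$. First, differentiating the flow equation \eqref{eq::nonlinear flow} in $x$ gives the variational equation $\frac{d}{dt}D\phi_t(x)=DN(\phi_t(x))\,D\phi_t(x)$ with $D\phi_0(x)=\idty$. Second, the group property $\phi_T=\phi_{T-t}\circ\phi_t$ yields, via the chain rule, $D\phi_T(x)=D\phi_{T-t}(\phi_t(x))\,D\phi_t(x)$; since $D\phi_t(x)$ is invertible (cf. \eqref{inverse of the differential of the flow}), this gives the compact form $G(t)=D\phi_T(x)\,[D\phi_t(x)]^{-1}$.

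First I would compute $\dot G(t)$ from the compact form. Using the derivative-of-inverse rule $\frac{d}{dt}M^{-1}=-M^{-1}\dot M\,M^{-1}$ together with the variational equation, one finds $\frac{d}{dt}[D\phi_t(x)]^{-1}=-[D\phi_t(x)]^{-1}DN(\phi_t(x))$, whence
\begin{equation}
	\dot G(t)=-D\phi_T(x)\,[D\phi_t(x)]^{-1}DN(\phi_t(x))=-D\phi_{T-t}(\phi_t(x))\,DN(\phi_t(x)).
\end{equation}
This is precisely the ODE already invoked in the proof of Proposition~\ref{pro:control synthesis in small time 1}, and it emerges here as a byproduct of the same computation.

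Next I would compute $\dot G(t)$ directly from $G(t)=D\phi_{T-t}(\phi_t(x))$, accounting for its two dependencies on $t$. The time index $T-t$ contributes, through the variational equation and the identity $\phi_{T-t}(\phi_t(x))=\phi_T(x)$, the term $-DN(\phi_T(x))\,D\phi_{T-t}(\phi_t(x))$; the moving base point $\phi_t(x)$, whose velocity is $N(\phi_t(x))$, contributes the term $D^2\phi_{T-t}(\phi_t(x))\,N(\phi_t(x))$ obtained by contracting one slot of the second-derivative tensor. Equating this expression with the one from the previous step and rearranging yields exactly \eqref{eq:link between the SAs abstract}. Finally, putting $t=0$ and using $\phi_0(x)=x$, $D\phi_{T}(\phi_0(x))=D\phi_T(x)$, then right-multiplying by the invertible matrix $D\phi_T(x)^{-1}$, produces \eqref{eq:link between the SAs}.

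The main obstacle is the tensor bookkeeping in the second computation: one must carefully identify which argument slot of $D^2\phi_{T-t}(\phi_t(x))$ is filled by the velocity $N(\phi_t(x))$ and which remains as the operator's output, so that the resulting object is precisely the matrix $D^2\phi_{T-t}(\phi_t(x))\,N(\phi_t(x))$ appearing in the statement (the symmetry of $D^2\phi$ makes the choice immaterial). An equivalent route that localizes this same subtlety differently is to first establish the push-forward identity $N(\phi_T(x))=D\phi_{T-t}(\phi_t(x))\,N(\phi_t(x))$---itself obtained by differentiating $\phi_{T-t}(\phi_t(x))=\phi_T(x)$ in $t$ and noting the right-hand side is $t$-independent---and then to differentiate it in $x$, cancelling the common invertible factor $D\phi_t(x)$.
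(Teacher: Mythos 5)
Your proposal is correct and follows essentially the same route as the paper's proof: both compute $\frac{d}{dt}D\phi_{T-t}(\phi_t(x))$ in two ways---once via the chain rule on the two $t$-dependencies (producing the $DN(\phi_T(x))$ and $D^2\phi_{T-t}$ terms) and once via the group identity $D\phi_{T-t}(\phi_t(x))\,D\phi_t(x)=D\phi_T(x)$ (producing $-D\phi_{T-t}(\phi_t(x))\,DN(\phi_t(x))$)---and then equate. Your use of the derivative-of-inverse rule versus the paper's differentiation of the product identity is a cosmetic difference only.
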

	
	\begin{proof}
		Set $y(t)=D\phi_{T-t}(b(t))$ where $b\in C^1(\R_+, \R^d)$. Then, $y$ is derivable by Proposition~\ref{lem:properties on N}, and using the chain rule, one finds
		\begin{equation}\label{eq:first relation}
			\dot{y}(t) = -DN(\phi_{T-t}(b(t)))D\phi_{T-t}(b(t))+D^2\phi_{T-t}(b(t))\dot{b}(t).
		\end{equation}
		On the other and, letting $b(t)=\phi_t(x)$ for some $x\in\R^d$, one finds $y(t)=D\phi_{T-t}(\phi_t(x))$ and thus
		\begin{equation}\label{eq:second relation}
			\dot{y}(t)=-D\phi_{T-t}(\phi_t(x))DN(\phi_t(x)).
		\end{equation}
		In fact, one finds $D\phi_{T-t}(\phi_t(x))D\phi_t(x)=D\phi_T(x)$ from $\phi_{T-t}(\phi_t(x))=\phi_T(x)$, so that, after derivation w.r.t., $t$,
		\[
		\dot{y}(t) D\phi_t(x)=-D\phi_{T-t}(\phi_t(x))DN(\phi_t(x))D\phi_t(x)
		\]
		which yields \eqref{eq:second relation} after right multiplication by $[D\phi_t(x)]^{-1}$. Now, since $\phi_{T-t}(b(t))=\phi_{T-t}(\phi_t(x))=\phi_T(x)$ and $\dot{b}(t)=N(\phi_t(x))$, identifying \eqref{eq:first relation} and \eqref{eq:second relation} yields
		\begin{IEEEeqnarray*}{rCl}
			DN(\phi_T(x))D\phi_{T-t}(\phi_t(x))&=&D\phi_{T-t}(\phi_t(x))DN(\phi_t(x))\nonumber\\
			&&\hspace{-0.8cm}+D^2\phi_{T-t}(\phi_t(x))N(\phi_t(x))
		\end{IEEEeqnarray*}
		completing the proof of \eqref{eq:link between the SAs abstract}. Finally, \eqref{eq:link between the SAs} follows by evaluating \eqref{eq:link between the SAs abstract} at $t=0$, and right multiplication by $[D\phi_T(x)]^{-1}$.
	\end{proof}

\end{document}